\def\squarebox#1{\hbox to #1{\hfill\vbox to #1{\vfill}}}
\theoremstyle{plain}
\newtheorem{Thm}{Theorem}
\newtheorem{lem}{Lemma}
\newcommand{\im}{\textrm{Im}}
\newcommand{\Div}{\textrm{div}}
\newcommand{\R}{\mathbb{R}}
\newcommand{\B}{\dot{\mathcal{H}}_1({\R}^n)}
\newcommand{\CI}{{\mathcal C}^{\infty}_{0}({\R}^{n}) }
\newcommand{\C}{{\mathbb C}}
\newcommand{\half}{\frac{1}{2}}
\newcommand{\BI}{H^{1,A_1}_{b,s}({\R}^{1+n})}
\newcommand{\CII}{{\mathcal C}^{\infty}_{0}(\vert x\vert\leq b) }
\def\epsilon{\varepsilon}
\def\phi {\varphi}
\newtheorem{rem}{Remark}
\newtheorem{prop}{Proposition}
\newtheorem{defi}{Definition}
\numberwithin{equation}{section}
\renewcommand{\d}{\textrm{d}}
\renewcommand{\leq}{\leqslant}
\renewcommand{\geq}{\geqslant}
\providecommand{\abs}[1]{\left\lvert#1\right\rvert}
\providecommand{\norm}[1]{\left\lVert#1\right\rVert}
\begin{document}

\title[Local energy decay ]
{ Local energy decay in even dimensions for the wave equation  with a time-periodic non-trapping metric  and applications to Strichartz
estimates}

\author[Y. Kian]{Yavar Kian}

\address {Universit\'e Bordeaux I, Institut de Math\'ematiques de Bordeaux,  351, Cours de la Lib\'eration, 33405  Talence, France}

\email{Yavar.Kian@math.u-bordeaux1.fr}
\maketitle
\begin{abstract}
We obtain local energy decay as well as global Strichartz estimates for the solutions $u$ of the wave equation
 $\partial_t^2 u-div_x(a(t,x)\nabla_xu)=0,\  t\in{\R},\ x\in{\R}^n,$ with time-periodic non-trapping metric $a(t,x)$ equal to $1$ outside a compact set with respect to $x$. We suppose that the cut-off resolvent
  $R_\chi(\theta)=\chi(\mathcal U(T, 0)-e^{-i\theta})^{-1}\chi$, where $\mathcal U(T, 0)$ is the monodromy operator and $T$ the period of $a(t,x)$, admits an holomorphic continuation to   
$\{\theta\in\mathbb{C}\  :\   \textrm{Im}(\theta) \geq  0\}$, for $n \geq 3$ , odd, and to $\{ \theta\in\mathbb C\ :\ \textrm{Im}(\theta)\geq0,\ \theta\neq 2k\pi-i\mu,\ k\in\mathbb{Z},\ \mu\geq0\}$ for $n \geq4$, even, and for $n \geq4$ even $R_\chi(\theta)$ is bounded in a neighborhood of $\theta=0$. 
\end{abstract}

\section{Introduction}
\renewcommand{\theequation}{\arabic{section}.\arabic{equation}}
\setcounter{equation}{0}

Consider the Cauchy problem
\begin{equation} \label{eq=lepbA}  \left\{\begin{array}{c}
u_{tt}-\Div_{x}(a(t,x)\nabla_{x}u)=0,\ \ (t,x)\in{\R}\times{\R}^{n},\\
(u,u_{t})(s,x)=(f_{1}(x),f_{2}(x))=f(x),\ \ x\in{\R}^n,\end{array}\right.\end{equation}
where the perturbation $a(t,x)\in \mathcal C^\infty({\R}^{n+1})$ is a scalar function which satisfies the conditions:
\begin{equation}\label{eq=perturbationA}\begin{array}{l}(i) \ C_0\geq a(t,x)\geq c_0>0,\   (t,x)\in{\R}^{n+1},\\
(ii)\ \textrm{ there exists }\rho>0\textrm{ such that }a(t,x)=1\textrm{ for }\vert x\vert\geq\rho,\\
(iii)\textrm{ there exists }T>0\textrm{ such that }a(t+T,x)=a(t,x),\   (t,x)\in{\R}^{n+1}.\\
\end{array}\end{equation}
Throughout this paper we assume   $n\geq3$. Let  $\dot{H}^\gamma({\R}^n)=\Lambda^{-\gamma} (L^2({\R}^n))$ be the homogeneous Sobolev spaces, where $\Lambda=\sqrt{-\Delta_x}$ is determined by the Laplacian in ${\R}^n$. The solution of (1.1) is given by the propagator
\[\mathcal{U}(t,s):\dot{\mathcal{H}}_\gamma({\R}^n)\ni(f_1,f_2)=f\mapsto \mathcal{U}(t,s)f=(u,u_t)(t,x)\in\dot{\mathcal{H}}_\gamma({\R}^n)\]
where $\dot{\mathcal{H}}_\gamma({\R}^n)=\dot{H}^\gamma({\R}^n)\times \dot{H}^{\gamma-1}({\R}^n)$.
Our goal in  this paper is to establish  that for cut-off functions $\psi_1,\psi_2\in\mathcal C^\infty_0(\vert x\vert\leq\rho+1)$ ,  we have local energy decay having the form
\begin{equation} \label{eq=locA}\Vert\psi_1 \mathcal U(t,s)\psi_2\Vert_{\mathcal L({\B})}\leq C_{\psi_1,\psi_2}p(t-s),\quad t\geq s,\end{equation}
with $p(t)\in L^1({\R}^+)$.  For this purpose,  we assume that the perturbation associated to $a(t,x)$ is non-trapping. More precisely, consider the null bicharacteristics $(t(\sigma),x(\sigma),\tau(\sigma),\xi(\sigma))$ of the principal symbol $\tau^2-a(t,x)\vert\xi\vert^2$ of $\partial^2_t-\Div_x(a\nabla_x)$ satisfying
\[t(0)=t_0,\vert x(0)\vert\leq R_1,\: \xi(0) = \xi_0,\quad \tau^2(\sigma)=a(t(\sigma),x(\sigma))\vert\xi(\sigma)\vert^2.\]
It is known that for $\xi_0\neq 0$, the null bicharacteristics can be parametrized with respect to $t$ and they can be defined for $t\in{\R}$ (see \cite{K1}). We denote by $(t,x(t),\tau(t),\xi(t))$ the bicharacteristic $(t(\sigma),x(\sigma),\tau(\sigma),\xi(\sigma))$ parametrized  with respect to $t$.
We introduce the following condition
\begin{enumerate}
\item[$\rm(H1)$]
 We say that the metric $a(t,x)$ is non-trapping if for all $R>R_1$ there exists $T(R,R_1)>0$ such that $\vert x(t)\vert>R$ for $\vert t-t_0\vert\geq T(R,R_1)$. 
\end{enumerate}
The non-trapping condition (H1) is necessary for  \eqref{eq=locA}  since for some trapping perturbations we may have solutions with exponentially increasing local energy (see \cite{CR}). On the other hand, even for non-trapping periodic perturbations some parametric resonances could lead to solutions with exponentially growing local energy (see \cite{CPR} for the case of time-dependent potentials). To exclude the existence of such solutions we must impose a second hypothesis.
Let\\
 $\psi_1,\psi_2\in{\CI}$. We define the cut-off resolvent associated to problem (\ref{eq=lepbA}) by \[R_{\psi_1,\psi_2}(\theta)=\psi_1(\mathcal{U}(T,0)-e^{-i\theta})^{-1}\psi_2.\] Consider the following assumption.

\begin{enumerate}
\item[$\rm(H2)$] Let $\psi_1,\psi_2\in{\CI}$ be such that $\psi_i = 1$ for $|x| \leq \rho + 1 + 3T, i = 1, 2$. Then the operator $R_{\psi_1,\psi_2}(\theta)$ admits a holomorphic continuation from $\{\theta\in\mathbb{C}\  :\   \textrm{Im}(\theta) \geq A > 0\}$ to
$\{\theta\in\mathbb{C}\  :\   \textrm{Im}(\theta) \geq  0\}$, for $n \geq 3$, odd, and to $\{\theta\in\mathbb{C}\  :\   \textrm{Im}(\theta) >  0\}$ for $n \geq4$, even.
Moreover, for $n$ even, $R_{\psi_1,\psi_2}(\theta)$ admits a continuous continuation from $\{\theta\in\mathbb{C}\  :\   \textrm{Im}(\theta) > 0\}$ to $\{\theta\in\mathbb{C}\  :\   \textrm{Im}(\theta) \geq  0,\theta\neq 2k\pi, k\in\mathbb{Z}\}$ and we have
\[\limsup_{\substack{\lambda\to0 \\ \textrm{Im}(\lambda)>0}}\Vert R_{\psi_1,\psi_2}(\lambda)\Vert<\infty.\]
\end{enumerate}
We like to mention that in the study of the time-periodic perturbations of the Schr\"odinger operator
(see [3] ) the resolvent of the monodromy operator $(\mathcal U(T) - z)^{-1}$ plays a central role. Moreover, the absence of eigenvalues $z \in\mathbb C, |z| = 1$ of $\mathcal U(T)$, and the behavior of the resolvent for $z$ near $1$, are closely related to the decay of local energy as $t \rightarrow\infty$. So our results may be considered as a natural extension of those for Schr\"odinger operator. On the other hand, for the
wave equation we may have poles $\theta \in\mathbb C$, $  \textrm{Im}(\theta) > 0$ of the resolvent $R_{\psi_1,\psi_2}(\theta)$, while for the Schr\"odinger operator
with time-periodic potentials a such phenomenon is excluded.

Many authors  have investigated the  local  energy decay as well as $L^2$ integrability for the local energy of wave equations. The results of microlocal analysis concerning the  propagation of singularities make possible to improve  many results of local energy decay.  Tamura also established several results about the local energy decay  (see \cite{tam}, \cite{T7}, \cite{T6}, \cite{T5}). Assume  $n=3$ and let $V(t,x)$ satisfy the conditions
 
 \[\begin{array}{l}(i) V(t,x) \textrm{ is non-negative and $\mathcal C^1$ with uniformly bounded derivative}, \\
(ii)\ \textrm{ there exists  }\rho>0\textrm{ such that }V(t,x)=0\textrm{ for }\vert x\vert\geq\rho,\\
(iii)\partial_tV(t,x)=\underset{t\to+\infty}O(t^{-\alpha})\textrm{ for a } 0<\alpha\leq1,\textrm{ uniformly in $x$}. \end{array}\]
Then, in \cite{T5} Tamura shows that the local energy of the solution of  the wave equation  \[\partial_t^2-\Delta_xu+V(t,x)u=0,\]  decreases exponentially. In \cite{tam}, Tamura  also obtains a  decay of the local energy associated to  \eqref{eq=lepbA}, when the metric $a(t,x)$ is independent of $t$ and  admits a discontinuity, by applying arguments similar to the those used in \cite{MRS}. 

By using the compactness of the local evolution operator, deduced from a propagation of singularities, and the RAGE theorem of Georgiev and Petkov (see \cite{GP}), Bachelot and Petkov show in \cite{BP1} that in the case of odd dimensions, the decay of the local energy associated to the wave equation with time periodic potential  is exponential for initial data  with compact support  included in a subspace of finite codimension.

In \cite{V3} and \cite{V1}, Vainberg proposed a general analysis of problems with non-trapping  perturbations investigating the asymptotic behavior of the cut-off resolvent. Using the same approach,  Vodev has established in \cite{V5} and \cite{V4} that, for non-trapping perturbation $a(x)$ and for $n\geq4$ even, we have \eqref{eq=locA} with  $p(t)=t^{1-n}$.

Notice that all these results are based on the analysis of the Fourier transform with respect to $t$ of the solutions. Since the principal symbol of $\partial_t^2-\Div_x(a(t,x)\nabla_x\cdot)$ is time dependent, we can not use this argument for \eqref{eq=lepbA}.

To obtain (\ref{eq=locA}), we use the assumption (H2). For $n\geq3$, odd, we obtain an exponential decay of energy with $p(t)=e^{-\delta t}$, $\delta>0$. For $n\geq4$ even, it is more difficult to prove (\ref{eq=locA}) and we  use the results  of Vainberg for non-trapping, time-periodic problems.
In \cite{V2} Vainberg proposed a general approach to problems with time-periodic perturbations including potentials, moving obstacles and high order operators, provided that the perturbations are non-trapping. 
The analysis in \cite{V2} is based on the meromorphic continuation of an operator $R(\theta)$ introduced in Section 2. In order to obtains (\ref{eq=locA}) for $n\geq4$ even, we will establish the link between $R_{\psi_1,\psi_2}(\theta)$ and the operator $R(\theta)$. Our main result is the following
\begin{Thm}\label{t3}
Assume $\rm(H1)$ and $\rm(H2)$  fulfilled and  $n\geq4$  even. Let $\chi_1,\psi_1\in\mathcal C_0^\infty(\vert x\vert\leq\rho+1)$. Then, for all $s\leq t$, we have
\begin{equation}\label{eq=thm2A} \Vert \chi_1\mathcal U(t,s)\psi_1\Vert_{\mathcal L({\B})}\leq Cp(t-s),\end{equation}
 where $C>0$ is independent of $t$, $s$ and   $p(t)$ is  defined by
\[p(t)= \frac{1}{(t+1)\ln^2(t+e)}.\]
\end{Thm}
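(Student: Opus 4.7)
The plan is to follow Vainberg's approach \cite{V2} for time-periodic non-trapping perturbations, adapting it to the wave equation via the link (to be established in Section~2) between the cut-off monodromy resolvent $R_{\chi_1,\psi_1}(\theta)$ and Vainberg's operator $R(\theta)$. First I would reduce the estimate for arbitrary $s \leq t$ to the case of integer multiples of the period: writing $t-s = mT + r$ with $r \in [0,T)$, the uniform boundedness of $\mathcal U$ on $\B$ over one period, combined with finite propagation speed (to widen $\psi_1$ into the larger cut-off allowed by (H2)), lets us absorb the remainder $r$ into fixed compactly supported cut-offs, so the task becomes the estimation of $\chi_1 \mathcal U(mT,0)\psi_1$ for large $m$. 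For $\textrm{Im}(\theta) \geq A$ large enough, summing a Neumann series yields the representation
\[
\chi_1 \mathcal U(mT,0)\psi_1 \;=\; -\frac{1}{2\pi}\int_{-\pi + iA}^{\pi + iA} e^{-i(m+1)\theta}\, R_{\chi_1,\psi_1}(\theta)\, d\theta,
\]
so everything reduces to controlling the Fourier coefficients in $m$ of the $2\pi$-periodic operator-valued function $R_{\chi_1,\psi_1}(\theta)$ as the contour is pushed down to $\textrm{Im}(\theta) = 0$.

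Assumption (H2) authorises this deformation, providing holomorphy in $\{\textrm{Im}(\theta) > 0\}$, a continuous extension to $\{\textrm{Im}(\theta) \geq 0\} \setminus 2\pi\mathbb Z$, and boundedness at $\theta = 0$. The asymptotic size as $m \to \infty$ is then dictated by the behavior of the integrand at the branch points $\theta \equiv 0 \pmod{2\pi}$. The Vainberg theory in even dimensions supplies, for $R(\theta)$, a local expansion near $\theta = 0$ of the form $R(\theta) = A(\theta) + B(\theta)\log\theta$ with $A,B$ operator-valued and continuous up to the origin; transferring this expansion to $R_{\chi_1,\psi_1}$ through the Section~2 identification, I would split the contour integral into a piece $|\theta| \lesssim 1/\log m$ estimated directly via the logarithmic bound and a complementary piece treated by two integrations by parts, each one producing a factor of $1/\log m$ from the derivative of $B(\theta)\log\theta$. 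Summing both contributions gives the rate $p(mT) \sim 1/(m \log^2 m)$ which, after undoing the reduction to integer multiples of $T$, yields exactly the announced $p(t-s) = 1/((t-s+1)\log^2(t-s+e))$.

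The principal obstacle is the identification, in Section~2, of the singular structure of $R_{\chi_1,\psi_1}(\theta)$ at $\theta \in 2\pi\mathbb Z$ with that of Vainberg's $R(\theta)$. Unlike the stationary wave equation treated by Vodev \cite{V4,V5}, the spectral parameter here parametrises the Floquet multipliers on the unit circle, so the logarithmic branch at each $2k\pi$ must be recovered by combining the non-trapping hypothesis (H1), which ensures the meromorphic continuation of $R(\theta)$ as in \cite{V2}, with a one-period propagation formula relating the two resolvents. Once this structural link is in place, the contour deformation and the Fourier-coefficient estimate proceed along classical lines and the theorem follows.
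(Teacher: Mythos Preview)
Your overall strategy---Fourier inversion of the Neumann series for $R_{\chi_1,\psi_1}(\theta)$, contour deformation, and extraction of the $1/(m\log^2 m)$ rate from the logarithmic singularity at $\theta=0$---matches the paper's. But there is a genuine gap in how you pass from Vainberg's machinery to the full monodromy resolvent.

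Vainberg's construction in \cite{V2} is carried out for the \emph{scalar} operator $V(t,s)=P_1\,\mathcal U(t,s)\,P^2$ (the ``sine'' part of the propagator), and the meromorphic continuation together with the $(S')$ expansion is proved for $\chi R(t,s,\theta):L^2_b\to H^1_{b,\mathrm{per}}$, not for the matrix-valued $R_{\chi_1,\psi_1}(\theta)$ on $\dot{\mathcal H}_1$. Your proposed ``Section~2 identification'' does not exist in the form you need: the paper's link runs the other way, expressing $F'(\chi_3 V(t,s)\psi_3)(t,\theta)$ in terms of $R_{\chi_4,\psi_4}(\theta)$ in order to \emph{remove poles} of the $(S')$ continuation, not to endow $R_{\chi_1,\psi_1}$ itself with an $(S')$ structure. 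Consequently the paper does not deform a contour for $R_{\chi_1,\psi_1}$ at all. It first obtains the decay $\|\chi_3 V(dT,s)\psi_3\|+\|\chi_3\partial_t V(dT,s)\psi_3\|\lesssim p(dT)$ uniformly in $s\in[0,\tfrac{2T}{3}]$ (Lemmas~1--3), and then recovers the ``cosine'' part $U(t,0)=P_1\mathcal U(t,0)P^1$ via the representation
\[
U(t,0)=\int_0^{2T/3} V(t,s)\,[\partial_t^2,\alpha](s)\,U(s,0)\,ds,
\]
obtained from a time cut-off $\alpha$. The paper explicitly notes that for time-dependent $a(t,x)$ the naive relation $\partial_t V-V\,\partial_t^2 V|_{t=s}=U$ fails, so this bootstrap is not cosmetic; it is the device that upgrades the scalar estimate to the full $\mathcal U$. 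Your proposal skips this step entirely.

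Two smaller points. The expansion you invoke, $R(\theta)=A(\theta)+B(\theta)\log\theta$, is property $(S)$; what actually comes out of the Vainberg inversion (Theorem~9 in \cite{V2}) is the $(S')$ series with \emph{inverse} powers of $\log\theta$, and the resulting contour bound is obtained from Lemma~7, Chapter~IX of \cite{V1}, not by two integrations by parts. And note that $(\mathrm{H2})$ gives no information for $\mathrm{Im}\,\theta<0$; the continuation below the real axis (needed to get any decay at all from the contour) comes solely from the $(S')$ structure of the scalar $V$, which is another reason the scalar route cannot be bypassed.
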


Let $U_0(t)$ be the unitary group on ${\B}$ related to the Cauchy problem (1.1) for the free wave equation  ($a=1$ and $\tau=0)$. For $b\geq\rho$ denote by $P^b_+$ (resp $P^b_-$)  the orthogonal projection on the orthogonal complements of the Lax-Phillips spaces 
\[D^b_\pm=\{f\in{\B}\ :\ (U_0(t)f)_1(x)=0\  \textrm{ for } \ \vert x\vert<\pm t+b\}.\]
Set $Z^b(t,s)=P^b_+\mathcal{U}(t,s)P^b_-$. Then, for $n$ odd, the resonances of the problem (1.1) coincide with the eigenvalues of the operator $Z^b(T,0)$ and the condition (H1) guarantees that the the spectrum $\sigma(Z^b(T,0))$ of $Z^b(T,0)$ is formed by eigenvalues $z_j\in\mathbb{C}$ with finite multiplicities. Moreover, these eigenvalues are independent of the choice of $b\geq\rho$ (see for more details [18] for time-periodic potentials and moving obstacles). Consider the following assumption
\begin{enumerate}
\item[$\rm(H3)$]\[\sigma(Z^b(T,0))\cap\{z\in\mathbb{C}\ :\ \vert z\vert\geq1\}=\varnothing.\]\end{enumerate}

In \cite{K1}, we have established global Strichartz estimates for $n\geq 3$ odd, assuming that (H1) and (H3) are fulfilled.
 More precisely, (H1) and  (H3) imply that   for $n\geq3$ odd and for $2\leq p,q<+\infty$ satisfying
\begin{equation}\label{eq=stA}p>2,\quad \frac{1}{p}=n\left(\half-\frac{1}{q}\right)-1,\quad\textrm{and}\quad \frac{1}{p}< \frac{n-1}{2}\left(\half-\frac{1}{q}\right),\end{equation}
the solution $u(t)$ of (\ref{eq=lepbA}), for $s=0$ and $f\in{\B}$, satisfies the estimate
\begin{equation}\label{eq=estA}\Vert u(t)\Vert_{L_t^p({\R}^+,L_x^q({R}^n))}+ \Vert u\Vert_{L_t^\infty({\R}^+,\dot{H}^1({\R}^n))}+\Vert  u_t\Vert_{L_t^\infty({\R}^+,L^2({\R}^n))}\leq C(\rho,T,n,p,q)(\Vert f_1\Vert_{\dot{H}^1({\R}^n)}+\Vert f_2\Vert_{L^2({\R}^n)}).\end{equation}
Moreover, in \cite{K1}, it has been proved  that for $n\geq3$, for  $2\leq p,q<+\infty$ satisfying
\begin{equation}\label{eq=stB}\quad \frac{1}{p}=n\left(\half-\frac{1}{q}\right)-1\quad\textrm{and}\quad \frac{1}{p}< \frac{n-1}{2}\left(\half-\frac{1}{q}\right),\end{equation}
 for  the solution $u$ of (\ref{eq=lepbA}) we have the following local estimate
\begin{equation}\label{eq=estB}\Vert\chi u\Vert_{L^p_t([s,s+\delta],L_x^q({\R}^n))}\leq C\Vert f\Vert_{\B}\end{equation}
with $C,\delta>0$ independent of $f$ and $s$, and $\chi\in{\CI}$.
Applying \eqref{eq=thm2A}, we show that the estimates (\ref{eq=estA}) remain  true for even dimensions and we obtain the following
\begin{Thm}\label{t1}
Assume $n\geq3$  and let $a(t,x)$ be a metric such that $\rm(H1)$ and $\rm(H2)$ are fulfilled.  Let
 $2\leq p,q<+\infty$ satisfy conditions \eqref{eq=stA}.
Then for the solution $u(t)$ of \eqref{eq=lepbA} with $s=0$ and $f\in{\B}$ we have  the estimate
\begin{equation} \label{eq=estC}\Vert u(t)\Vert_{L_t^p({\R}^+,L_x^q({R}^n))}+ \Vert u\Vert_{L_t^\infty({\R}^+,\dot{H}_x^1({\R}^n))}+\Vert u_t\Vert_{L_t^\infty({\R}^+,L_x^2({\R}^n))}\leq C(p,q,\rho,T)(\Vert f_1\Vert_{\dot{H}^1({\R}^n)}+\Vert f_2\Vert_{L^2({\R}^n)}).\end{equation}
\end{Thm}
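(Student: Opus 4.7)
For $n$ odd the result is already in \cite{K1} under (H1) and (H3); the odd-dimensional analogue of \thmref{t3} mentioned in the introduction supplies exponential local energy decay $p(t)=e^{-\delta t}$ under (H1) and (H2), so the odd case of \thmref{t1} is immediate. I would focus on $n\geq 4$ even, where the rate from \thmref{t3} is merely the $L^1$-integrable $p(t)=1/((t+1)\ln^2(t+e))$. The strategy is to combine the local Strichartz estimate \eqref{eq=estB} (valid in every dimension), the decay from \thmref{t3}, and the classical global Strichartz estimates for the free wave propagator $U_0(t)$.

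First I would prove \eqref{eq=estC} for initial data $f$ supported in $\{|x|\leq\rho+1\}$. Fix smooth cutoffs $\chi,\tilde\chi$ with nested supports inside $\{|x|\leq\rho+1\}$, with $\chi=1$ on $\{|x|\leq\rho\}$ and $\tilde\chi=1$ on $\mathrm{supp}(\chi)+B(0,C_0\delta)$, where $C_0$ bounds the propagation speed of $\mathcal{U}$ and $\delta>0$ is the length from \eqref{eq=estB}, chosen small enough for the support conditions. Writing $I_k=[k\delta,(k+1)\delta]$, finite speed of propagation yields $\chi\,\mathcal U(t,k\delta)=\chi\,\mathcal U(t,k\delta)\tilde\chi$ on $I_k$, so \eqref{eq=estB} followed by \thmref{t3} gives
\[
\|\chi u\|_{L^p(I_k,L^q_x)}\leq C\|\tilde\chi\,\mathcal U(k\delta,0)f\|_{{\B}}\leq C\,p(k\delta)\,\|f\|_{{\B}}.
\]
Since $p>2$ and $p(\cdot)$ is bounded and integrable on ${\R}^+$, summing the $p$-th powers in $k$ yields $\|\chi u\|_{L^p({\R}^+,L^q_x)}\leq C\|f\|_{{\B}}$. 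To remove the cutoff, I write $u=\chi u+(1-\chi)u$; since $a\equiv 1$ on $\mathrm{supp}(1-\chi)$, the outer piece $w=(1-\chi)u$ satisfies $w_{tt}-\Delta w=(\Delta\chi)u+2\nabla\chi\cdot\nabla u=:F$, with $F$ spatially supported in a fixed annulus. Applying global free Strichartz estimates to the Duhamel formula for $w$ bounds $\|w\|_{L^p_tL^q_x}$ by $\|f\|_{{\B}}+\|F\|_{L^1_tL^2_x}$, and the last term is dominated by the local energy of $u$ already controlled.

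For general $f\in{\B}$, I would use a dyadic splitting $f=\phi_0f+\sum_{k\geq 1}\phi_kf$ with $\phi_0$ supported in a fixed ball and $\phi_k$ in an annulus $\{|x|\sim 2^k\}$. The piece $\phi_0f$ is handled by the previous step. For each $\phi_kf$ with $k\geq 1$, finite speed of propagation implies $\mathcal U(t,0)\phi_kf=U_0(t)\phi_kf$ for $t\lesssim 2^k/C_0$, during which interval free Strichartz supplies the bound; thereafter, the argument of the previous paragraph applies with initial time of order $2^k$ and decay rate $p(t-c\,2^k)$, which remains summable. An $\ell^2$-type orthogonality of $\{\phi_kf\}$ in ${\B}$ then closes the estimate.

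The main obstacle I anticipate is this last step. \thmref{t3} provides decay only for data compactly supported near the perturbation, so extending the bound to data with arbitrarily far support requires careful tracking of when each annular piece $\phi_kf$ first interacts with the perturbation region, together with bookkeeping of the time-shifted decay $p(t-c\,2^k)$. The $L^1$-integrability of $p(\cdot)$, which would fail for the bare $t^{1-n}$ rate of Vainberg's classical results, is precisely what makes this summation go through, playing the role here that exponential decay plays in the odd-dimensional argument of \cite{K1}.
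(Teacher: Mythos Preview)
Your treatment of the odd-dimensional case and of compactly supported data in even dimensions is correct and close to what the paper (via \cite{K1}) does. The difficulty you flag in your final paragraph is, however, a genuine gap rather than mere bookkeeping. Once the annular piece $\phi_k f$ has evolved freely until a time $t_k\sim c\,2^k$, the configuration $U_0(t_k)\phi_k f$ is \emph{not} supported in $\{|x|\leq\rho+1\}$, so ``the argument of the previous paragraph'' does not apply to it: \thmref{t3} only bounds $\chi_1\mathcal U(t,s)\psi_1$ with \emph{both} cutoffs supported near the perturbation, and you have no a~priori pointwise control of $\|\psi_1 U_0(t_k)\phi_k f\|_{\B}$ with which to initiate the time-shifted decay $p(t-c\,2^k)$. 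Tracking entry times shell by shell does not supply this missing bound; any honest completion of your scheme ends up requiring control of $\|\psi_1\,\mathcal U(t,0)(1-\chi)f\|_{\B}$ integrated in $t$, which is exactly the object the paper isolates.

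The paper avoids the dyadic decomposition altogether by first proving an $L^2$-in-time bound on the local energy valid for \emph{all} $f\in{\B}$ (\thmref{t12}):
\[
\int_0^\infty \Vert \phi\,\mathcal U(t,0)f\Vert_{\B}^2\,dt \leq C\Vert f\Vert_{\B}^2.
\]
This is obtained from a single near/far splitting $f=\chi f+(1-\chi)f$: the near part uses \eqref{eq=thm2A}, while the far part is reduced, via a Duhamel argument, to the Smith--Sogge $L^2_t$ local smoothing estimate \eqref{eq=L20}--\eqref{eq=L200} for the \emph{free} flow. This free $L^2_t$ integrability is precisely the ingredient your outline is missing; it replaces the dyadic summation and eliminates the need to track when each shell meets the perturbation. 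With \eqref{eq=thm8A} in hand, the paper then combines it with \eqref{eq=thm2A} and the local Strichartz estimate \eqref{eq=estB} to obtain \eqref{eq=estC}, following the scheme of \cite{K1}.
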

The results of Theorem \ref{t1} have been exploited in \cite{K2} to prove the existence of local weak solutions of semilinear wave equations for small initial data and long time intervals.

Notice that the estimate
\begin{equation} \label{a}\Vert\psi_1 \mathcal U(NT,0)\psi_2\Vert_{\mathcal L({\B})}\leq \frac{C_{\psi_1,\psi_2}}{(N+1)\ln^2(N+e)},\quad N\in\mathbb N,\end{equation}
implies (\ref{eq=locA}). On the other hand, if (\ref{a}) holds, the assumption (H2) for $n$ even is fulfilled. Indeed, for large $A>>1$ and $ \textrm{Im}(\theta)\geq AT$ we have
\[R_{\psi_1,\psi_2}(\theta)=-e^{i\theta}\sum_{N=0}^{\infty}\psi_1\mathcal U(NT,0)\psi_2e^{iN\theta}\]
and applying (\ref{a}), we conclude that $R_{\psi_1,\psi_2}(\theta)$ admits a holomorphic continuation from\\
 $\{\theta\in\mathbb{C}\  :\   \textrm{Im}(\theta) \geq A > 0\}$   to $\{\theta\in\mathbb{C}\  :\   \textrm{Im}(\theta) >  0\}$. Moreover, $R_{\psi_1,\psi_2}(\theta)$ is bounded for $\theta\in{\R}$. In Section 4, we give some examples of metrics $a(t,x)$ such that  (\ref{a}) is fulfilled.  
\begin{rem}
Let the metric $(a_{ij}(t,x))_{1\leq i,j\leq n}$ be such that for all $i,j=1\cdots n$ we have
\[\begin{array}{l}
\displaystyle(i)\ \textrm{ there exists }\rho>0\textrm{ such that }a_{ij}(t,x)=\delta_{ij},\textrm{ for }\vert x\vert\geq\rho,\textrm{ with $\delta_{ij}=0$ for $i\neq j$ and $\delta_{ii}=1$},\\
\displaystyle(ii)\textrm{ there exists }T>0\textrm{ such that }a_{ij}(t+T,x)=a_{ij}(t,x),\    (t,x)\in{\R}^{n+1},\\
\displaystyle(iii)a_{ij}(t,x)=a_{ji}(t,x),  (t,x)\in{\R}^{n+1},\\
\displaystyle(iv)\textrm{ there exist } C_0>c_0>0 \textrm{ such that }C_0\vert\xi\vert^2\geq\sum_{i,j=1}^na_{ij}(t,x)\xi_i\xi_j\geq c_0\vert\xi\vert^2 ,\ \  (t,x)\in{\R}^{1+n},\ \xi\in{\R}^n.\\
\end{array}\]

If we replace $a(t,x)$ by $(a_{ij}(t,x))_{1\leq i,j\leq n}$ in \eqref{eq=lepbA} we get the following problem
\begin{equation} \label{me}  \left\{\begin{array}{c}
\displaystyle u_{tt}-\sum_{i,j=1}^n\frac{\partial}{\partial x_i}\left(a_{ij}(t,x)\frac{\partial}{\partial x_j}u\right)=0,\ \ (t,x)\in{\R}^{n+1},\\
\ \\
(u,u_{t})(s,x)=(f_{1}(x),f_{2}(x))=f(x),\ \ x\in{\R}^n.\end{array}\right.\end{equation}

Repeating  the  argument  for   \eqref{eq=lepbA} we can prove that estimates \eqref{eq=thm2A} and \eqref{eq=estC} are  true for the solution $u$ of the problem \eqref{me} if for the trajectories of the symbol $\tau^2-\sum_{i,j=1}^na_{ij}(t,x)\xi_i\xi_j$ and the corresponding resolvant $R_{\phi_1,\phi_2}(\theta)$, $\rm(H1)$ and $\rm(H2)$ are fulfilled and if $n,p,q$ satisfied \eqref{eq=stA}.
\end{rem}
\vspace{0,5cm}
\ \\
 \textbf{Acknowledgements.} The author would like to thank Vesselin Petkov for his precious help during the preparation of this work, Jean-Fran\c{c}ois Bony  for  his remarks and the referee for his suggestions.

\section{ Equivalence of $\rm(H2)$ and $\rm(H3)$  for odd dimensions}

In this section, we assume that (H1) is fulfilled and our purpose is to prove that for $n$ odd the assumptions (H2) and (H3) are equivalent. We start by recalling some properties of the operators $Z^b(t,s)$ and $\mathcal U(t,s)$.
\begin{prop}\label{p1}
For all $s,t\in{\R}$, we have
  \begin{equation}\label{eq=prop1A}\mathcal{U}(t+T,\tau+T)=\mathcal{U}(t,\tau).\end{equation}
\end{prop}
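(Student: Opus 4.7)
The plan is to exploit the time-periodicity $a(t+T,x)=a(t,x)$ in a change of time variable argument. The propagator $\mathcal{U}(t,\tau)$ is defined by the property that $\mathcal{U}(t,\tau)f = (u,u_t)(t,\cdot)$ where $u$ solves \eqref{eq=lepbA} with $(u,u_t)(\tau,\cdot)=f$. Since both sides of \eqref{eq=prop1A} are bounded operators on $\dot{\mathcal{H}}_1(\mathbb{R}^n)$, it suffices to show the identity on an arbitrary $f=(f_1,f_2)\in\dot{\mathcal{H}}_1(\mathbb{R}^n)$.

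First, I would let $u(t,x)$ be the solution of \eqref{eq=lepbA} with initial data $(u,u_t)(\tau+T,x)=f(x)$, so that by definition
\[\mathcal{U}(t+T,\tau+T)f=(u,u_t)(t+T,\cdot).\]
Next I would set $v(t,x):=u(t+T,x)$ and verify, using hypothesis $(iii)$ of \eqref{eq=perturbationA}, that $v$ satisfies
\[v_{tt}(t,x)-\Div_x(a(t,x)\nabla_x v(t,x))=u_{tt}(t+T,x)-\Div_x(a(t+T,x)\nabla_x u(t+T,x))=0,\]
together with the initial condition $(v,v_t)(\tau,x)=(u,u_t)(\tau+T,x)=f(x)$. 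By uniqueness of the Cauchy problem \eqref{eq=lepbA}, this yields
\[\mathcal{U}(t,\tau)f=(v,v_t)(t,\cdot)=(u,u_t)(t+T,\cdot)=\mathcal{U}(t+T,\tau+T)f,\]
and since $f$ was arbitrary the operator identity \eqref{eq=prop1A} follows.

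There is no real obstacle here: the only ingredients are the time-periodicity of the coefficient and the well-posedness of the Cauchy problem on $\dot{\mathcal{H}}_1(\mathbb{R}^n)$, the latter being standard for the class of metrics considered (strictly positive, smooth, equal to $1$ outside a compact set). If one wanted to be careful about function-space issues, one could first establish the identity for smooth compactly supported data and then extend by density using the continuity of both $\mathcal{U}(t,\tau)$ and the shift in time on $\dot{\mathcal{H}}_1(\mathbb{R}^n)$.
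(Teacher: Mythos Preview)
Your argument is correct and is exactly the standard one: shift the time variable by $T$, use the periodicity $a(t+T,x)=a(t,x)$ to see that the shifted solution satisfies the same equation with the same data at time $\tau$, and conclude by uniqueness of the Cauchy problem. The paper itself does not spell out a proof of this proposition but simply refers to \cite{K1} and \cite{P1}; the argument there is the same change-of-variable plus uniqueness reasoning you give.
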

Applying (\ref{eq=prop1A}), we get 
 \[\mathcal{U}((N+1)T,NT)=\mathcal{U}(T,0),\quad N\in\mathbb{N}.\]
and it follows $(\mathcal{U}(T,0))^N=\mathcal{U}(NT,0),\quad N\in\mathbb{N}.$
From now on, we set $\mathcal U(T)=\mathcal U(T,0).$

\begin{prop}\label{p2}
Assume $n\geq2$ and let $a(t,x)$ satisfy \eqref{eq=perturbationA}. Then, we get
\begin{equation}\label{eq=prop2A}\Vert \mathcal{U}(t,s)\Vert_{\mathcal{L}({\B})}\leq Ce^{A \vert t-s\vert},\end{equation}
where \[A=\left\Vert\frac{a_t}{a}\right\Vert_{L^\infty({\R}^{1+n})}.\]
\end{prop}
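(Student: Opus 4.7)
The plan is to prove the bound via an energy identity using a weighted energy that is equivalent to the $\dot{\mathcal{H}}_1$-norm and then applying Gronwall's inequality.

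First, for a solution $u$ of \eqref{eq=lepbA} with sufficiently regular, decaying data (the general case following by density), introduce the weighted energy
\begin{equation*}
E_a(t) = \int_{\R^n} \bigl( a(t,x)|\nabla_x u(t,x)|^2 + |u_t(t,x)|^2 \bigr)\, \d x.
\end{equation*}
By assumption (i) in \eqref{eq=perturbationA}, $c_0 \leq a(t,x) \leq C_0$, so
\begin{equation*}
\min(c_0,1)\,\|\mathcal U(t,s)f\|_{\B}^2 \leq E_a(t) \leq \max(C_0,1)\,\|\mathcal U(t,s)f\|_{\B}^2,
\end{equation*}
i.e.\ $E_a(t)^{1/2}$ is uniformly equivalent to the $\dot{\mathcal H}_1$-norm of $\mathcal U(t,s)f$.

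Next I would differentiate $E_a$ in $t$ and use the equation $u_{tt}=\Div_x(a\nabla_x u)$. This gives
\begin{equation*}
\frac{\d}{\d t} E_a(t) = \int a_t\,|\nabla_x u|^2 \,\d x + 2\int a\,\nabla_x u\cdot\nabla_x u_t\,\d x + 2\int u_t\,\Div_x(a\nabla_x u)\,\d x.
\end{equation*}
Integrating by parts in the third integral (legitimate because $a(t,\cdot)=1$ outside $|x|\leq\rho$ and $\nabla_x u$, $u_t$ decay at infinity for data in a dense class) produces $-2\int a\,\nabla_x u\cdot\nabla_x u_t\,\d x$, which cancels the second term. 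Only the $a_t$ contribution survives, and writing $a_t = (a_t/a)\,a$ and using assumption (i) again yields
\begin{equation*}
\frac{\d}{\d t} E_a(t) \leq \Bigl\|\frac{a_t}{a}\Bigr\|_{L^\infty}\!\!\int a\,|\nabla_x u|^2\,\d x \leq A\, E_a(t).
\end{equation*}

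Gronwall's inequality then gives $E_a(t)\leq E_a(s)\,e^{A(t-s)}$ for $t\geq s$, and running the estimate backwards (or applying the same argument with time reversed, noting that $-\frac{\d}{\d t} E_a \leq A E_a$) handles the case $t\leq s$, so in every case $E_a(t) \leq e^{A|t-s|} E_a(s)$. Combining with the equivalence above produces \eqref{eq=prop2A} with $C=\sqrt{\max(C_0,1)/\min(c_0,1)}$. The only mild subtlety is the density argument to justify the integration by parts for general $f\in\B$, which is standard given the finite speed of propagation and the smoothness of $a$.
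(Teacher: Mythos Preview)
Your proof is correct and is precisely the standard weighted-energy argument one expects here. The paper does not give its own proof of this proposition but simply refers to \cite{K1} and \cite{P1}; the argument there is exactly the one you wrote: differentiate the energy $E_a(t)=\int a|\nabla_x u|^2+|u_t|^2$, use the equation to cancel the cross terms, bound the remaining $\int a_t|\nabla_x u|^2$ by $A\,E_a(t)$, and conclude by Gronwall and the uniform equivalence of $E_a^{1/2}$ with the $\dot{\mathcal H}_1$-norm.
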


\begin{prop}\label{p3} Assume $n\geq3$  odd  and $\rm(H1)$  fulfilled. Then,  for all $b\geq\rho$ the eigenvalues  of $Z^b(T,0)$ are independent of the choice of $b$ and  for all $b\geq\rho$ there exists $T_1(b)$ such that for all $t,s\in{\R}$ satisfying  $t-s\geq T_1(b)$, $Z^b(t,s)$ is a compact  operator on ${\B}$.\end{prop}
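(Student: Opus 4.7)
The plan is to adapt the Lax--Phillips framework for translation representations to the time-periodic setting, leveraging the finite propagation speed of the wave equation, the strong Huygens principle available in odd dimensions, and the non-trapping hypothesis (H1). I would handle the two assertions of the proposition separately.

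For the independence of the eigenvalues of $Z^b(T,0)$ from $b\geq\rho$, I would use the nested structure of the Lax--Phillips spaces: since $D_\pm^{b'}\subset D_\pm^b$ whenever $b\leq b'$, the corresponding orthogonal projections satisfy $P_\pm^{b'} P_\pm^b = P_\pm^{b'}$. Given a nonzero eigenvalue $z$ of $Z^{b_1}(T,0)$ with eigenvector $f$, and given $b_2>b_1$, I would set $\tilde f = P_+^{b_2}\mathcal{U}(NT,0) f$ for $N$ large enough that any free outgoing content of $\mathcal U(NT,0)f$ lying in $D_+^{b_1}\setminus D_+^{b_2}$ has propagated past the scale $b_2$. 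Proposition \ref{p1} gives $\mathcal{U}(NT,0)=\mathcal{U}(T)^N$; combined with the fact that $U_0$ leaves $D_+^b$ invariant and with the strong Huygens principle in odd dimensions (which ensures that, after finite time, $P_+^{b_1}$ and $P_+^{b_2}$ agree on free outgoing data), one verifies that $Z^{b_2}(T,0)\tilde f = z^{N+1}\tilde f$ and that $\tilde f\neq 0$. The reverse inclusion of spectra follows by symmetry.

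For the compactness of $Z^b(t,s)$ when $t-s\geq T_1(b)$, the key is to exploit (H1) to show that the wave energy carried by data in the range of $P_-^b$ exits the scattering region within a controlled time. I would choose $T_1(b)$ so that every null bicharacteristic issued from $|x|\leq b$ satisfies $|x(\tau)|>b+1$ for $\tau\geq T_1(b)$. For $f$ in the range of $P_-^b$, the strategy is to decompose $\mathcal{U}(t,s)f$ into a free outgoing part, which lies in $D_+^b$ and is annihilated by $P_+^b$, plus a spatially localized remainder with uniformly bounded ${\B}$-norm. Such a decomposition is produced by a Vainberg-type parametrix construction: outside the scattering region $\mathcal U(t,s)$ coincides with the free propagator $U_0(t-s)$ modulo an operator whose kernel is smooth and compactly supported in $x$. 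Composing with $P_+^b$ and inserting a cut-off reduces $Z^b(t,s)$ to an operator continuously mapping ${\B}$ into a fixed compact spatial set with ${\B}$-norm control; the Rellich--Kondrachov theorem then yields compactness.

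The main technical hurdle is controlling the localized remainder uniformly in $f$ and converting bounded spatial localization into genuine compactness in the homogeneous space ${\B}$, which is not locally compact \emph{a priori}. One must treat the $\dot H^1$ and $L^2$ components of the phase space separately, and approximate the cut-off version of $Z^b(t,s)$ by finite-rank operators in operator norm; ultimately this rests on a careful analysis of the singularities of the free propagator kernel in the Lax--Phillips--Vainberg tradition. The odd-dimensional Huygens principle enters crucially here, since it permits an exact (not merely asymptotic) splitting of the free solution into incoming and outgoing parts after finite time.
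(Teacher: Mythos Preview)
The paper does not actually prove Proposition~\ref{p3}; immediately after stating Propositions~\ref{p1}--\ref{p4} it says ``We refer to \cite{K1} and \cite{P1} for the proof of all these properties.'' So there is nothing in the present paper to compare your proposal against: the result is simply imported from the Lax--Phillips/Petkov theory of scattering for hyperbolic operators.

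Your outline follows the standard route taken in those references (translation representation, propagation of singularities, Huygens' principle), so in spirit it matches what \cite{P1} does. Two points, however, deserve correction. First, your projection identity is backwards: from $D_\pm^{b'}\subset D_\pm^b$ for $b\leq b'$ one obtains $(D_\pm^b)^\perp\subset(D_\pm^{b'})^\perp$, hence $P_\pm^{b'}P_\pm^b = P_\pm^b$, not $P_\pm^{b'}$. The intertwining relations between $Z^{b_1}$ and $Z^{b_2}$ must be redone with the correct identity, and the conclusion $Z^{b_2}(T,0)\tilde f = z^{N+1}\tilde f$ is not what one gets; the standard argument (see Chapter~V of \cite{P1}) instead shows that the two semigroups are similar on the relevant invariant subspaces, whence their nonzero spectra coincide. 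Second, for compactness the decisive input from (H1) is not merely that bicharacteristics escape, but that propagation of singularities then makes $\chi\,\mathcal U(t,s)\,\chi$ a smoothing operator for $t-s\geq T_1(b)$; this is exactly what the paper later invokes in \eqref{eq=VB}. The gain of regularity, together with the compact spatial support enforced by the cut-offs (or, equivalently, by $P_\pm^b$ in the odd-dimensional translation representation), is what makes Rellich apply; your worry about $\dot{\mathcal H}_1$ not being locally compact is resolved precisely by that smoothing gain.
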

\begin{prop}\label{p4}
Assume $n\geq3$  odd. Then, for all $0<\epsilon<b$ and all $\chi\in\mathcal{C}^\infty_0(\vert x\vert\leq b-\epsilon)$ we have
\begin{equation}\label{eq=prop3A}\chi P_b^\pm=P_b^\pm\chi=\chi.\end{equation}
\end{prop}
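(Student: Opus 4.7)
The plan is to characterize membership in $D^b_\pm$ by an explicit support condition on both components and then reduce each of the equalities $P^b_\pm\chi=\chi$ and $\chi P^b_\pm=\chi$ to a straightforward computation with the energy inner product on $\dot{\mathcal H}_1({\R}^n)=\dot H^1({\R}^n)\times L^2({\R}^n)$.

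First I would show that every $f=(f_1,f_2)\in D^b_\pm$ satisfies $f_1\equiv f_2\equiv 0$ on $\{|x|<b\}$. Evaluating the defining relation $(U_0(t)f)_1(x)=0$ on $\{|x|<\pm t+b\}$ at $t=0$ immediately gives $f_1=0$ on $\{|x|<b\}$. To extract the same vanishing for $f_2$, use the spectral representation
\[(U_0(t)f)_1=\cos(t\Lambda)f_1+\Lambda^{-1}\sin(t\Lambda)f_2\]
and differentiate in $t$ at $t=0$: for every fixed $x_0$ with $|x_0|<b$ the relation holds for all $t$ with $|t|$ small (since $\pm t+b>|x_0|$ persists near $0$), so the $t$-derivative at $0$ yields $f_2(x_0)=0$. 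Thus $\mathrm{supp}\,f_1\cup\mathrm{supp}\,f_2\subset\{|x|\geq b\}$.

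Next, given $\chi\in\mathcal C^\infty_0(|x|\leq b-\epsilon)$ and any $h=(h_1,h_2)\in\dot{\mathcal H}_1({\R}^n)$, the product $\chi h=(\chi h_1,\chi h_2)$ is supported in $\{|x|\leq b-\epsilon\}\subset\{|x|<b\}$. For any $f\in D^b_\pm$ the energy inner product reduces to
\[\langle \chi h,f\rangle_{\dot{\mathcal H}_1}=\int_{{\R}^n}\nabla(\chi h_1)\cdot\overline{\nabla f_1}\,dx+\int_{{\R}^n}\chi h_2\,\overline{f_2}\,dx,\]
and both integrals vanish: $f_2$ is zero on $\{|x|<b\}\supset\mathrm{supp}\,\chi$, and since $f_1$ vanishes on the open set $\{|x|<b\}$ so does its distributional gradient $\nabla f_1$, which kills the first integral as well. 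Hence $\chi h\in (D^b_\pm)^\perp$ and $P^b_\pm(\chi h)=\chi h$, giving $P^b_\pm\chi=\chi$. For the reverse identity, decompose $h=P^b_\pm h+(I-P^b_\pm)h$; the second summand lies in $D^b_\pm$, so by Step 1 it vanishes on $\{|x|<b\}\supset\mathrm{supp}\,\chi$, whence $\chi(I-P^b_\pm)h=0$ and $\chi P^b_\pm h=\chi h$.

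The main technical obstacle is the rigorous execution of Step 1: one must interpret $(U_0(t)f)_1(x)=0$ on the moving family of balls as an identity in the appropriate Sobolev class and justify differentiating in $t$ to recover the pointwise (a.e.) vanishing of $f_2$. Once that support statement is in hand, Steps 2 and 3 are purely algebraic manipulations of the orthogonal decomposition induced by $P^b_\pm$, and no further input from the wave dynamics is needed.
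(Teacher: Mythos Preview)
Your argument is correct. The paper does not supply its own proof of this proposition; it simply states it and then writes ``We refer to \cite{K1} and \cite{P1} for the proof of all these properties,'' so there is no in-paper argument to compare against. What you have written is essentially the classical Lax--Phillips argument one finds in Petkov's book \cite{P1}: the single substantive step is that every $f\in D^b_\pm$ has both components supported in $\{|x|\geq b\}$, and once that is known the identities $P^b_\pm\chi=\chi$ and $\chi P^b_\pm=\chi$ are immediate from the orthogonal decomposition, exactly as you describe. Your identification of the only nontrivial point (making the $t$-differentiation rigorous in the Sobolev setting, which one handles by pairing against test functions supported in $\{|x|<b\}$) is accurate, and the remaining manipulations with the $\dot{\mathcal H}_1$ inner product are routine for $n\geq 3$.
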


We refer to \cite{K1} and \cite{P1} for the proof of all these properties.

\begin{prop}\label{p6}
Let $\psi\in{\CI}$ be such that $\psi=1$ on $\vert x\vert\leq\rho+\half+T$. Then, we get
\begin{equation}\label{eq=prop4A}\mathcal U(T)-U_0(T)=\psi(\mathcal U(T)-U_0(T))=(\mathcal U(T)-U_0(T))\psi.\end{equation}
\end{prop}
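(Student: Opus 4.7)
The plan is to deduce both identities from finite speed of propagation, exploiting that the perturbation $a(t,x)-1$ vanishes on $\{|x|\geq\rho\}$. In each case I identify a spacetime region on which the perturbed and free propagators must produce the same Cauchy data at time $T$.

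For $\mathcal U(T)-U_0(T)=\psi(\mathcal U(T)-U_0(T))$, it suffices to show that for every $f\in\dot{\mathcal H}_1$, the pair $(\mathcal U(T)-U_0(T))f$ is supported in $\{|x|\leq\rho+\half+T\}$. Pick $x_0$ with $|x_0|>\rho+\half+T$ and consider the backward light cone
\[
\Gamma_{x_0}=\bigl\{(t,x):\ 0\leq t\leq T,\ |x-x_0|\leq T-t\bigr\}.
\]
For any $(t,x)\in\Gamma_{x_0}$ one has $|x|\geq|x_0|-(T-t)>\rho+\half+t>\rho$, so $a\equiv 1$ throughout $\Gamma_{x_0}$. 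Consequently, both $v(t)=\mathcal U(t)f$ and $w(t)=U_0(t)f$ satisfy the \emph{same} free wave equation inside $\Gamma_{x_0}$, with the same Cauchy data $f$ on $\Gamma_{x_0}\cap\{t=0\}$. A standard energy estimate in the characteristic cone gives $v=w$ on $\Gamma_{x_0}$, and in particular at $(T,x_0)$.

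For $\mathcal U(T)-U_0(T)=(\mathcal U(T)-U_0(T))\psi$, it suffices to show that $(\mathcal U(T)-U_0(T))f=0$ whenever $\mathrm{supp}\,f\subset\{|x|\geq\rho+\half+T\}$. For such $f$, set $u(t)=U_0(t)f$; by finite speed of propagation for the free wave equation, $\mathrm{supp}\,u(t)\subset\{|x|\geq\rho+\half+T-t\}\subset\{|x|\geq\rho+\half\}$ for $t\in[0,T]$. On this set $a\equiv 1$, so $\mathrm{div}(a\nabla u)=\Delta u$ and $u$ actually solves the perturbed Cauchy problem \eqref{eq=lepbA} with data $f$. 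Uniqueness for \eqref{eq=lepbA} then gives $\mathcal U(t)f=(u,u_t)(t)=U_0(t)f$ for $t\in[0,T]$, in particular at $t=T$.

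The two ingredients—uniqueness for the perturbed Cauchy problem on $\mathbb{R}^{n}$ and uniqueness for the free wave equation in a backward characteristic cone—are both classical consequences of the standard energy identity integrated over a truncated cone. The only mildly delicate point is to verify that these apply at the regularity of $\dot{\mathcal H}_1$-data, which is routine by density. I do not expect any substantive obstacle.
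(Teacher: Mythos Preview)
Your proof is correct and rests on the same idea as the paper's: finite speed of propagation together with $a\equiv 1$ on $\{|x|>\rho\}$. For the right identity $(\mathcal U(T)-U_0(T))(1-\psi)=0$ your argument is essentially identical to the paper's. For the left identity $(1-\psi)(\mathcal U(T)-U_0(T))=0$, the paper proceeds slightly differently: it writes $(1-\psi)u$ and $(1-\psi)v$ (with $u,v$ the perturbed and free solutions) as solutions of inhomogeneous free wave equations with source terms $[\Delta_x,\psi]u$ and $[\Delta_x,\psi]v$, and concludes they coincide. That last step tacitly requires $u=v$ on $\mathrm{supp}\,\nabla\psi\subset\{|x|>\rho+\half+T\}$ for $t\in[0,T]$, which is precisely what your backward-cone argument establishes directly. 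Your route is therefore a bit more self-contained; the paper's commutator formulation is convenient later in the article where the same device is reused.
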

\begin{proof}
Let $f\in{\B}$ and let $v$ be the function defined by $(v(t),v_t(t))=\mathcal U(t,0)(1-\psi)f$. The finite speed of propagation implies that, for all $0\leq t\leq T$ and $\vert x\vert\leq\rho+\half$, we have $v(t,x)=0$. Also, we find
\begin{equation}\label{eq=prop4B}\Delta_x=\Div_x(a(t,x)\nabla_x),\quad\textrm{for }\vert x\vert>\rho\end{equation}
and we deduce that $v$ is the solution, for $0\leq t\leq T$, of the problem
\[  \left\{\begin{array}{c}
v_{tt}-\Delta_xv=0,\\
(v,v_{t})(0,x)=(1-\psi(x))f(x).\end{array}\right.\]
It follows
\begin{equation}\label{eq=prop4C}(\mathcal U(T)-U_0(T))(1-\psi)=0.\end{equation}
Let $u$ and $v$ be functions defined by $(u(t),u_t(t))=\mathcal U(t,0)f$ and $(v(t),v_t(t))=U_0(t)f$ with\\
 $f\in{\B}$. Applying (\ref{eq=prop4B}), we deduce that $(1-\psi)u$ is a  solution of
\[  \left\{\begin{array}{c}
\partial_t^2((1-\psi)u)-\Delta_x(1-\psi)u=[\Delta_x,\psi]u,\\
((1-\psi)u,\partial_t((1-\psi)u)))(0,x)=(1-\psi(x))f(x)\end{array}\right.\]
and $(1-\psi)v$ is  a solution of the problem
\[  \left\{\begin{array}{c}
\partial_t^2((1-\psi)v)-\Delta_x(1-\psi)v=[\Delta_x,\psi]v,\\
((1-\psi)v,\partial_t((1-\psi)v)))(0,x)=(1-\psi(x))f(x).\end{array}\right.\]
Then, we have
\begin{equation}\label{eq=prop4D}(1-\psi)(\mathcal U(T)-U_0(T))=0.\end{equation}
Combining (\ref{eq=prop4C}) and (\ref{eq=prop4D}), we obtain (\ref{eq=prop4A}).
\end{proof}
\begin{Thm}\label{t2}

Assume $n\geq3$  odd and $\rm(H1)$  fulfilled. Let $\psi\in\mathcal{C}^\infty_0(\vert x\vert\leq\rho+T+1)$ be such that $\psi=1$  for $\vert x\vert\leq\rho+\half+T$. Let $\sigma(Z^\rho(T))$ be the spectrum of $Z^\rho(T,0)$.  Then the  eigenvalues   $\lambda \in \sigma(Z^\rho(T))\setminus \{0\}$ of $Z^\rho(T,0)$ coincide with the poles of $\psi(\mathcal U(T)-z)^{-1}\psi$.\end{Thm}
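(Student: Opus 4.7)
The plan is to identify the poles of $\psi(\mathcal U(T)-z)^{-1}\psi$ with the nonzero eigenvalues of $Z^b(T,0)$ for some large $b$, and then invoke Proposition~\ref{p3} (which asserts that these eigenvalues are independent of $b\geq\rho$) to transfer back to $Z^\rho(T,0)$. Concretely, I fix $b\geq\rho+T+1$ so that $\mathrm{supp}(\psi)\subset\{|x|\leq b-\varepsilon\}$; Proposition~\ref{p4} then gives $\psi P^b_\pm=P^b_\pm\psi=\psi$, and in odd dimensions $D^b_+\perp D^b_-$, so the projections commute and $P^b_+P^b_-=P_{K^b}$ is the orthogonal projection onto $K^b=(D^b_+\oplus D^b_-)^\perp$. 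For $|z|>\|\mathcal U(T,0)\|_{\mathcal L(\dot{\mathcal H}_1)}$ I would Neumann-expand and insert the projections via Proposition~\ref{p4}:
\[-\psi(\mathcal U(T,0)-z)^{-1}\psi=\sum_{N\geq 0}z^{-N-1}\,\psi\mathcal U(NT,0)\psi=\sum_{N\geq 0}z^{-N-1}\,\psi Z^b(NT,0)\psi.\]

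The heart of the argument is the identity $\psi Z^b(NT,0)\psi=\psi\,(Z^b(T,0))^N\psi$ for every $N\geq 0$. Although $Z^b(T,0)$ is not a discrete semigroup in $N$, the defect $Z^b(NT,0)-(Z^b(T,0))^N$ telescopes into a sum of terms in which the projection $I-P_{K^b}$ onto $D^b_+\oplus D^b_-$ is inserted between factors of $\mathcal U(T)$. By Proposition~\ref{p4}, multiplication by $\psi$ annihilates $D^b_+\oplus D^b_-$; finite speed of propagation together with the Lax--Phillips invariance $U_0(T)D^b_+\subset D^b_+$ and $U_0(T)D^b_-\supset D^b_-$, and Proposition~\ref{p6} (which lets us replace $\mathcal U(T)$ by $U_0(T)$ on data supported outside the perturbation region), allow this annihilation to be transported through the intermediate $\mathcal U(T)$ factors. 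Summing in $N$ then gives, for $|z|$ large,
\[\psi(\mathcal U(T,0)-z)^{-1}\psi=\psi(Z^b(T,0)-z)^{-1}\psi.\]
I expect this to be the main obstacle: the operator identity $Z^b(NT,0)=(Z^b(T,0))^N$ itself fails in general, and one must carefully check that the defect lives entirely in subspaces killed by $\psi$, which requires a simultaneous use of Propositions~\ref{p4} and~\ref{p6} together with $(H1)$.

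Granting this, Proposition~\ref{p3} ensures $(Z^b(T,0))^K$ is compact for $K$ large, so the analytic Fredholm theorem extends $(Z^b(T,0)-z)^{-1}$ meromorphically to $\mathbb C\setminus\{0\}$ with poles precisely at the nonzero eigenvalues of $Z^b(T,0)$. Analytic continuation extends the displayed identity, placing the poles of $\psi(\mathcal U(T)-z)^{-1}\psi$ among these eigenvalues. For the reverse inclusion, given a nonzero eigenvalue $\lambda_0$ with eigenvector $g$, iterating $(Z^b(T,0))^N g=\lambda_0^N g$ and using the factorization $Z^b(T,0)=P^b_+\mathcal U(T,0)P^b_-$ forces $g\in K^b$; combining $(H1)$ with Proposition~\ref{p4}, one shows that $g$ is supported inside $\{\psi=1\}$, so $\psi g=g$ and the pole survives the cut-off by $\psi$. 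Together with the independence of the eigenvalues from $b$ in Proposition~\ref{p3}, this matches the set of poles with $\sigma(Z^\rho(T,0))\setminus\{0\}$, completing the equivalence.
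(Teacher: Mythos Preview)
Your forward direction is essentially the paper's: both establish, for $|z|$ large, the identity $\psi(\mathcal U(T)-z)^{-1}\psi=\psi(Z^b(T,0)-z)^{-1}\psi$ by matching the Neumann series term by term, and the key step $\psi(Z^b(T,0))^N\psi=\psi P^b_+\mathcal U(NT,0)P^b_-\psi$ is exactly what the paper invokes (deferring the details to \cite{K1}). Analytic Fredholm then gives the meromorphic continuation and the inclusion of poles in $\sigma(Z^b(T,0))\setminus\{0\}$.

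The reverse inclusion is where your argument has a genuine gap. You assert that an eigenvector $g$ of $Z^b(T,0)$ with nonzero eigenvalue is \emph{supported} in $\{\psi=1\}$, but this is not what $g\in K^b$, $(\mathrm{H1})$, or Proposition~\ref{p4} give you. Membership in $K^b$ constrains the Lax--Phillips translation representative of $g$ to $[-b,b]$; it does \emph{not} force compact physical-space support, let alone support inside $\{|x|\leq\rho+\tfrac12+T\}$. At best one can argue that $\psi g\neq0$ (if $\psi g=0$ then Proposition~\ref{p6} gives $\mathcal U(T)g=U_0(T)g$, whence $g$ would be a nonzero eigenvector of $Z_0^b(T)$, which is quasi-nilpotent), but even this does not immediately yield $\psi P_{\lambda_0}\psi\neq0$ when the spectral projection has rank $>1$ or the pole is of higher order; one would also need the analogous statement for the adjoint eigenvector.

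The paper avoids this altogether by a resolvent-identity trick. Writing $W(T)=Z_0^b(T)-Z^b(T,0)$ and using Proposition~\ref{p6} to get $W(T)=\psi(U_0(T)-\mathcal U(T))\psi$, two applications of the second resolvent equation give
\[
(Z^b(T,0)-z)^{-1}=(Z_0^b(T)-z)^{-1}\psi V\psi\bigl[\psi(\mathcal U(T)-z)^{-1}\psi\bigr]\psi V\psi(Z_0^b(T)-z)^{-1}+\text{(holomorphic in }\mathbb C\setminus\{0\}),
\]
with $V=U_0(T)-\mathcal U(T)$. Since $(Z_0^b(T)-z)^{-1}$ is entire on $\mathbb C\setminus\{0\}$, every nonzero eigenvalue of $Z^b(T,0)$ must appear as a pole of $\psi(\mathcal U(T)-z)^{-1}\psi$. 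This sidesteps any support analysis of eigenvectors and handles higher-order poles automatically; it is the missing ingredient in your reverse direction.
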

\begin{proof}
Following Proposition \ref{p3}, we  need to show this equivalence only for $Z^b(T)$ with $b\geq\rho$. Set $b=\rho+2+T$ and write $Z(T)$, $P_+$, $P_-$ instead of $Z^b(T)$,  $P^b_+$, $P^b_-$. In the same way, write $Z_0(T)$ instead of $Z_0^b(T)=P^b_+U_0(T)P^b_-$. Proposition 3  implies that the spectrum $\sigma(Z(T))$ of $Z(T)$ consists of eigenvalues and  $(Z(T)-z)^{-1}$ is meromorphic on $\mathbb C \setminus \{0\}$ (see also Chapter V of \cite{P1} ). For $\vert z\vert>\Vert\mathcal U(T)\Vert\geq\Vert Z(T)\Vert$, we have
\[\psi(Z(T)-z)^{-1}\psi=-\sum_{k=0}^\infty\frac{\psi(Z(T))^k\psi}{z^{k+1}}.\]
The properties of $Z^b(T)$ (see \cite{K1})  imply that, for $\vert z\vert>\Vert\mathcal U(T)\Vert\geq\Vert Z(T)\Vert$, we have
\[\psi(Z(T)-z)^{-1}\psi=-\sum_{k=0}^\infty\frac{\psi P_+(\mathcal U(kT))P_-\psi}{z^{k+1}}.\]
Since $b>\rho+T+1$ and $\psi\in\mathcal{C}^\infty_0(\vert x\vert\leq\rho+T+1)$,  applying (\ref{eq=prop1A})  and (\ref{eq=prop3A}), we get
\begin{equation}\label{eq=thm3A}\psi(Z(T)-z)^{-1}\psi=-\sum_{k=0}^\infty\frac{\psi (\mathcal U(T))^k\psi}{z^{k+1}}=\psi(\mathcal U(T)-z)^{-1}\psi.\end{equation}
Formula (\ref{eq=thm3A}) implies that  $\psi(\mathcal U(T)-z)^{-1}\psi$ is meromorphic on $\mathbb C \setminus \{0\}$ and the poles of $\psi(\mathcal U(T)-z)^{-1}\psi$ are included in the set   $\sigma(Z(T))\setminus\{0\}$. We will now show the inverse. Set
\[W(T)=Z_0(T)-Z(T)=P_+(U_0(T)-\mathcal U(T))P_-.\]
Applying (\ref{eq=prop3A}) and (\ref{eq=prop4A}), we deduce
\begin{equation}\label{eq=thm3B}W(T)=\psi V(T)\psi,\quad\textrm{with}\quad V(T)=U_0(T)-\mathcal U(T).\end{equation}
Next, let $z\in\mathbb C$ be such that $\vert z\vert>\Vert\mathcal U(T)\Vert$. We have
\[(Z(T)-z)^{-1}(Z_0(T)-Z(T))(Z_0(T)-z)^{-1}=(Z(T)-z)^{-1}-(Z_0(T)-z)^{-1},\]
and we get
\begin{equation}\label{eq=thm3C}(Z(T)-z)^{-1}=(Z(T)-z)^{-1}(Z_0(T)-Z(T))(Z_0(T)-z)^{-1}+(Z_0(T)-z)^{-1}.\end{equation}
Also, we obtain
\[(Z(T)-z)^{-1}=(Z_0(T)-z)^{-1}(Z_0(T)-Z(T))(Z(T)-z)^{-1}+(Z_0(T)-z)^{-1}\]
and applying  (\ref{eq=thm3C}) to the right-hand side of this equality, we conclude that

\[\begin{array}{lll}(Z(T)-z)^{-1}&=&(Z_0(T)-z)^{-1}(Z_0(T)-Z(T))(Z(T)-z)^{-1}(Z_0(T)-Z(T))(Z_0(T)-z)^{-1}\\
\ &\ &+(Z_0(T)-z)^{-1}(Z_0(T)-Z(T))(Z_0(T)-z)^{-1}+(Z_0(T)-z)^{-1}.\end{array}\]
Applying (\ref{eq=thm3B}), we find
\[\begin{array}{lll}
(Z(T)-z)^{-1}&=&(Z_0(T)-z)^{-1}\psi V(T)\psi(Z(T)-z)^{-1}\psi V(T)\psi(Z_0(T)-z)^{-1}\\
\ &\ &+(Z_0(T)-z)^{-1}\psi V(T)\psi(Z_0(T)-z)^{-1}+(Z_0(T)-z)^{-1}\end{array}\]
and (\ref{eq=thm3A}) implies
 \begin{equation}\label{eq=thm3D}\begin{array}{lll}
(Z(T)-z)^{-1}&=&(Z_0(T)-z)^{-1}\psi V(T)\psi(\mathcal U(T)-z)^{-1}\psi V(T)\psi(Z_0(T)-z)^{-1}\\
\ &\ &+(Z_0(T)-z)^{-1}\psi V(T)\psi(Z_0(T)-z)^{-1}+(Z_0(T)-z)^{-1}.\end{array}\end{equation}
The resolvent  $(Z_0(T)-z)^{-1}$ is holomorphic on $\mathbb C \setminus \{0\}$ and (\ref{eq=thm3D})  implies that all eigenvalues of $Z(T)$ different from  $0$ are poles of  $\psi(\mathcal U(T)-z)^{-1}\psi$. Thus, the resonances coincide with the poles of the meromorphic continuation of $\psi(\mathcal U(T)-z)^{-1}\psi$.\end{proof}
 Assuming (H1), Theorem \ref{t2} implies that assumptions (H2) and (H3) are equivalent, for $n\geq3$ odd.   
 \begin{rem}Combining the results of Theorem \ref{t2} with \cite{K1} we conclude that for $n\geq3$ odd we have \eqref{eq=locA} with $p(t)=e^{-\delta t}$, provided assumptions $\rm(H1)$ and $\rm(H2)$  fulfilled. Moreover, assuming $\rm(H1)$ and $\rm(H2)$, we obtain (\ref{eq=estC}) for $2\leq p,q<+\infty$ satisfying  (\ref{eq=stA}). \end{rem}

\section{Decay of  local energy for $n$ even}
Throughout  this section, we will show that the assumptions (H1) and (H2) imply for $n \geq 4$ the decay (\ref{eq=locA}) of the local energy. As a first step, we will show that we can generalize some results of Vainberg about the Fourier-Bloch-Gelfand transform of the propagator. Then, by applying these results, we will prove Theorem \ref{t3}.

\subsection{Assumptions and definitions}
In this subsection we introduce some notations and operators. We will also precise some assumptions. We follow closely the expositions in \cite{V2} and we present the corresponding results  for   problem (\ref{eq=lepbA}). 

 Assume
\begin{equation}\label{eq=VA}\mathcal U(t,s)=0\quad\textrm{for}\quad t<s \quad\textrm{and}\quad U_0(t)=0\quad\textrm{for}\quad t<0.\end{equation}
Let $P_1$ and $P_2$ be the projectors of $\mathbb C^2$ defined by \[  P_1(h)=h_1,\quad P_2(h)=h_2,\quad h=(h_1,h_2)\in\mathbb C^2\]
and let $P^1,P^2\in\mathcal L({\C},{\C}^2)$ be defined by
\[  P^1(h)=(h,0),\quad P^2(h)=(0,h),\quad h\in\mathbb C.\]

Denote by $V(t,s)$ the operator defined on $L^2({\R}^n)$ by
\[V(t,s)=P_1\mathcal U(t,s)P^2.\]
Notice that for $g\in L^2({\R}^n)$, $w=V(t,s)g$ is the solution of

\[  \left\{\begin{array}{c}
\partial_t^2(w)-\Div_{x}(a(t,x)\nabla_{x}w)=0,\\
(w,\partial_tw)_{\vert t=s}=(0,g).\end{array}\right.\]
Let $E(t,s,x,x_0)$ be the kernel of the operator $V(t,s)$. The propagation of singularities  (see \cite{H}) and (H1) imply that, for all $r>0$, there exists $T_1(r)$ such that 
\begin{equation}\label{eq=VB} E\in \mathcal C^\infty\quad\textrm{for}\quad\vert x\vert,\vert x_0\vert<r\ \textrm{and }t-s>T_1(r).\end{equation}
We may consider that $T_1(r)$ is a strictly increasing and regular function (see \cite{V2}). Moreover, we  assume that
\begin{equation}\label{eq=VC} T_1(r)=T_1(b),\quad r\leq b\quad\textrm{with } b=\rho+1+\frac{4}{5}+2T.\end{equation}
From now on, we set 
\[b=\rho+1+\frac{4}{5}+2T.\]
Let $T_2=T_2(r)$ be an increasing and regular function such that
\[ T_2(r)>T_1(r),\quad r>0\]
and
\[T_2(r)=k_0T,\quad r\leq b\quad\textrm{with } k_0\in\mathbb N.\]
Let $\xi(t,s,x)\in\mathcal C^\infty({\R}\times{\R}\times{\R}^n,{\R})$ be a function such that $\xi=0$ for $t-s>T_2(\vert x\vert)$, $\xi=1$ for $t-s\leq T_1(\vert x\vert)$, $\xi=\xi(t-s)$ for $\vert x\vert\leq b$. Let $\psi\in\mathcal C^\infty({\R}^n_x)$ be a function such that $\psi=1$ for $\vert x\vert\geq b-\frac{1}{3}$ and $\psi=0$ for $\vert x\vert\leq b-\frac{2}{3}$. Denote by $P(t)$ the differential operator
\[P(t)=\partial_t^2-\Div_x(a(t,x)\nabla_x).\]
Set $\Lambda=\sqrt{-\Delta_x}$ and let  $W(t,s)$ be the operator
\begin{equation}\label{eq=VD}W(t,s)=\xi(t,s)V(t,s)-\psi N(t,s),\end{equation}
with
\[N(t,s)=\int_s^t\frac{\sin(\Lambda (t-\tau))}{\Lambda}\psi[P(\tau),\xi(\tau,s)]V(\tau,s)d\tau.\]
Let $h\in L^2({\R}^n)$. Then $w_1=N(t,s)h$ is the solution of
\[  \left\{\begin{array}{c}
\partial_t^2w_1-\Delta_xw_1=\psi[P(t),\xi(t,s)]V(t,s)h,\\
(w_1,\partial_tw_1)_{\vert t=s}=(0,0).\end{array}\right.\]
From (\ref{eq=prop4B}), we deduce that   $w_2=\psi N(t,s)h$ is the solution of
\[  \left\{\begin{array}{c}
\partial_t^2w_2-\Div_x(a(t,x)\nabla_xw_2)=-[\Delta_x,\psi]N(t,s)h+\psi^2[P(t),\xi(t,s)]V(t,s)h,\\
(w_2,\partial_tw_2)_{\vert t=s}=(0,0).\end{array}\right.\]
It follows that  $w_3=W(t,s)h$ is the solution of
\[  \left\{\begin{array}{c}
\partial_t^2w_3-\Div_x(a(t,x)\nabla_xw_3)=G(t,s)h,\\
(w_3,\partial_tw_3)_{\vert t=s}=(0,h)\end{array}\right.\]
with
\[ G(t,s)h=[\Delta_x,\psi]N(t,s)h+(1-\psi^2)[P(t),\xi(t,s)]V(t,s)h.\]
\begin{prop}\label{p7}
Let $\chi\in{\CII}$. Then, the operator $G(t,s)\chi$ is a compact operator of $L^2({\R}^n)$.
\end{prop}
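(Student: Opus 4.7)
The plan is to show that each of the two summands defining $G(t,s)\chi$ is an integral operator on $L^2({\R}^n)$ whose kernel is continuous and compactly supported in $(x,y)$, hence Hilbert--Schmidt and compact. Decompose $G(t,s)\chi=A_1+A_2$ with
\[A_1=[\Delta_x,\psi]N(t,s)\chi,\qquad A_2=(1-\psi^2)[P(t),\xi(t,s)]V(t,s)\chi.\]

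The term $A_2$ is the easier one. The cutoff $1-\psi^2$ restricts $x$ to $|x|\leq b-\frac{1}{3}$ and $\chi$ restricts $y$ to $|y|\leq b$. On $|x|\leq b$ we have $\xi(\tau,s,x)=\xi(\tau-s)$, so $[P(t),\xi(t,s)]=\xi''(t-s)+2\xi'(t-s)\partial_t$, and $\xi',\xi''$ vanish outside $t-s\in[T_1(b),k_0T]$. In this regime \eqref{eq=VB} gives that the kernel $E(t,s,x,y)$ of $V(t,s)$ together with $\partial_tE(t,s,x,y)$ is smooth on $\{|x|,|y|\leq b\}$. Hence the kernel
\[k_2(x,y)=(1-\psi(x)^2)\bigl(\xi''(t-s)E(t,s,x,y)+2\xi'(t-s)\partial_tE(t,s,x,y)\bigr)\chi(y)\]
of $A_2$ is smooth and compactly supported, so $A_2$ is Hilbert--Schmidt.

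For $A_1$, $[\Delta_x,\psi]$ is a first-order differential operator with smooth coefficients supported in the shell $\{b-\frac{2}{3}\leq|x|\leq b-\frac{1}{3}\}$, which already produces a compact $x$-support. It thus suffices to show that the kernel of $N(t,s)\chi$ is smooth in $(x,y)$. Fix $\tau\in[s,t]$ and set $B_\tau:=\psi[P(\tau),\xi(\tau,s)]V(\tau,s)\chi$. Since $P(\tau)V(\tau,s)\chi=0$, the Leibniz rule gives
\[B_\tau=\psi\bigl(\partial_\tau^2\xi-\Div_x(a\nabla_x\xi)\bigr)V(\tau,s)\chi+2\psi(\partial_\tau\xi)\partial_\tau V(\tau,s)\chi-2\psi a\nabla_x\xi\cdot\nabla_xV(\tau,s)\chi.\]
All derivatives of $\xi$ vanish outside $\{T_1(|x|)\leq\tau-s\leq T_2(|x|)\}$, which is compact in $x$ for each fixed $\tau$. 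On this set one has $\tau-s\geq T_1(\max(|x|,|y|))$ when $|y|\leq b$, using $T_1(|x|)=T_1(b)$ for $|x|\leq b$ and monotonicity of $T_1$; hence \eqref{eq=VB} applies and shows that $E(\tau,s,\cdot,\cdot)$ and its relevant derivatives are smooth in $(x,y)$ on the support of $B_\tau$. Therefore $B_\tau$ has a smooth compactly supported kernel. Applying $\frac{\sin(\Lambda(t-\tau))}{\Lambda}$ preserves smoothness in $x$, and integration in $\tau$ preserves it too, so $N(t,s)\chi$ has a kernel smooth in $x$ with $y$-support in $\{|y|\leq b\}$. Composing with $[\Delta_x,\psi]$ then localizes $x$ to a compact annulus, yielding a smooth compactly supported kernel for $A_1$, which is therefore Hilbert--Schmidt.

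The main obstacle is to check that wherever the integrand defining $B_\tau$ is nonzero, the triple $(\tau-s,x,y)$ lies in the regime where \eqref{eq=VB} guarantees smoothness of $E$. This is ensured by the construction of $\xi$ together with the choices of $T_1,T_2,b$ recorded in \eqref{eq=VC}, which were designed precisely so that the support of $\nabla_x\xi,\partial_\tau\xi$ falls in the non-trapping time window where the propagation of singularities derived from (H1) applies.
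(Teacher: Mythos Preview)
Your proof is correct and follows essentially the same approach as the paper: decompose $G(t,s)\chi$ into the two summands, use \eqref{eq=VB} together with the support properties of $\xi$ and $1-\psi^2$ to get a smooth compactly supported kernel for the second term, and use the smoothness of the integrand defining $N(t,s)\chi$ (after localizing by $[\Delta_x,\psi]$) for the first term. The paper states the smoothness of the kernel of $N(t,s)$ for $|x|,|x_0|\leq b$ rather tersely, and your argument for $A_1$ supplies the missing detail by unpacking the commutator $[P(\tau),\xi]$ and checking that the support of $B_\tau$ sits in the region where \eqref{eq=VB} applies; your Hilbert--Schmidt formulation is a harmless strengthening of the paper's bare compactness claim.
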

\begin{proof}
Let $\chi\in{\CII}$. The properties of $\xi$ implies
\begin{equation}\label{eq=prop8A}[P(t),\xi(t,s,x)]=0,\quad\textrm{for }t-s<T_1(\vert x\vert)\textrm{ or for }t-s>T_2(\vert x\vert).\end{equation}
Since $1-\psi^2(x)=0$ for $\vert x\vert\geq b-\frac{1}{3}$, the properties (\ref{eq=VB}) and (\ref{eq=prop8A}) imply that $(1-\psi^2)[P(t),\xi(t,s)]V(t,s)\chi$ 
is a compact operator of $L^2({\R}^n)$.
Therefore (\ref{eq=VC}) and (\ref{eq=prop8A}) imply that the kernel  $N(t,s,x,x_0)$ of the operator $N(t,s)$ satisfies
\begin{equation}\label{1111aaaa} N(t,s,x,x_0)=0,\ \textrm{for }t-s< T_1(b).\end{equation}
Applying (\ref{eq=VB}) and (\ref{eq=VC}), we obtain
\begin{equation}\label{eq=prop8B}N(t,s,x,x_0)\in\mathcal C^\infty,\ \textrm{for } \vert x\vert,\vert x_0\vert\leq b.\end{equation}
Then, since  $[\Delta_x,\psi](x)=0$ for $\vert x\vert>b$, (\ref{eq=prop8B}) implies that $[\Delta_x,\psi]N(t,s)\chi$ is a compact operator in $L^2({\R}^n)$. We deduce that $G(t,s)\chi$ is compact operator of $L^2({\R}^n)$.\end{proof}

To prove (\ref{eq=locA}), we will use some results established in \cite{V2} for $s=0$. For our purpose we need to consider the case $0\leq s<T$. The proofs are similar and we will only give a proof when it is necessary, otherwise we refer to \cite{V2}. 

\begin{Thm}\label{t4} \emph{(Theorem 1, \cite{V2})}\\
1) For all $h\in{\CII}$ and $s\geq0$, the integral equation
\begin{equation}\label{eq=thm5A}\phi_s(t,.)+\int_s^tG(t,\tau)\phi_s(\tau,.)d\tau=-G(t,s)h,\end{equation}
admits an unique solution $\phi_s(t,x)$, with $\phi_s\in \mathcal C^\infty({\R}\times{\R}^n)$ such that\\
 $\textrm{supp}_x\phi_s\subset\{x\ :\ \vert x\vert\leq b\}$ and $\phi_s(t,x)=0$ for $t\leq s+T_1(b)$.\\
2) For all $h\in{\CII}$ and for  the solution $\phi_s$ of \eqref{eq=thm5A} we have
\begin{equation}\label{eq=thm5B}V(t,s)h=W(t,s)h+\int_s^tW(t,\tau)\phi_s(\tau,.)d\tau.\end{equation}
\end{Thm}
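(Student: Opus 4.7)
The plan is to adapt the argument of \cite{V2} (which treats $s=0$) to arbitrary $s\geq 0$. The key observation, which makes the Volterra equation \eqref{eq=thm5A} tractable, is a time-gap for $G$: for every $h$ and every $\tau\leq t$,
\[G(t,\tau)h=0\quad\text{for}\quad t-\tau\leq T_1(b),\qquad \mathrm{supp}_x G(t,\tau)h\subset\{|x|\leq b\}.\]
The first claim uses that $N(t,\tau,x,x_0)=0$ for $t-\tau<T_1(b)$ by \eqref{1111aaaa} (killing $[\Delta_x,\psi]N(t,\tau)h$), and that on the support of $1-\psi^2$, contained in $\{|x|\leq b-\tfrac13\}$, the cut-off reduces to $\xi(t,\tau,x)=\xi(t-\tau)$, which equals $1$ for $t-\tau\leq T_1(b)$, so $[P(t),\xi(t,\tau)]=0$ there. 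The support statement follows because $[\Delta_x,\psi]$ and $1-\psi^2$ are both supported in $\{|x|\leq b-\tfrac13\}$.

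For part 1, I would apply Picard iteration
\[\phi_s^{(0)}=0,\qquad\phi_s^{(k+1)}(t,\cdot)=-G(t,s)h-\int_s^tG(t,\tau)\phi_s^{(k)}(\tau,\cdot)\,d\tau.\]
A direct induction using the time-gap lemma shows that $\phi_s^{(k+1)}-\phi_s^{(k)}$ vanishes for $t\leq s+(k+1)T_1(b)$ and is supported in $\{|x|\leq b\}$. The sequence is therefore eventually constant on every bounded $t$-interval, so it converges to a solution $\phi_s$ of \eqref{eq=thm5A} supported in $\{|x|\leq b\}$ with $\phi_s(t,\cdot)=0$ for $t\leq s+T_1(b)$ (this already holds for $\phi_s^{(1)}=-G(\cdot,s)h$). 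Uniqueness follows by applying the same time-gap argument to the homogeneous version of \eqref{eq=thm5A}. Finally, $\phi_s\in\mathcal C^\infty({\R}\times{\R}^n)$ because the kernel of $G(t,\tau)$ is smooth on $\{|x|,|x_0|\leq b\}$ by \eqref{eq=VB} and \eqref{eq=prop8B}.

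For part 2, I would introduce
\[\tilde V(t,s)h:=W(t,s)h+\int_s^tW(t,\tau)\phi_s(\tau,\cdot)\,d\tau.\]
Since $W(t,\tau)$ satisfies $P(t)W(t,\tau)=G(t,\tau)$ with Cauchy data $W(\tau,\tau)=0$ and $\partial_tW(t,\tau)|_{t=\tau}=\mathrm{Id}$, Duhamel's principle gives
\[P(t)\tilde V(t,s)h=G(t,s)h+\phi_s(t,\cdot)+\int_s^tG(t,\tau)\phi_s(\tau,\cdot)\,d\tau,\]
which vanishes by \eqref{eq=thm5A}. Together with $\tilde V(s,s)h=0$ and $\partial_t\tilde V(t,s)h|_{t=s}=h$, uniqueness of the Cauchy problem for $P(t)$ yields $\tilde V(t,s)h=V(t,s)h$, which is \eqref{eq=thm5B}.

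The main technical point is the time-gap lemma together with the $x$-support statement; once these are established, both parts of the theorem reduce to standard Volterra iteration and Duhamel arguments. The passage from the reference time $s=0$ treated in \cite{V2} to arbitrary $s\geq 0$ is automatic, since $T_1,T_2$ depend only on $|x|$ and $\xi(t,s,x)=\xi(t-s)$ on $\{|x|\leq b\}$, so the whole construction is translation invariant under simultaneous shifts of $t$ and $s$.
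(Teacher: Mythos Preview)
The paper does not actually prove this theorem: it is stated with the attribution ``(Theorem~1, \cite{V2})'' and the surrounding text says explicitly that results of \cite{V2} established for $s=0$ carry over to $0\leq s<T$, with proofs given only ``when it is necessary, otherwise we refer to \cite{V2}.'' So there is no in-paper proof to compare against.

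Your sketch is correct and is exactly the standard Vainberg argument. The time-gap lemma is the heart of the matter and you justify it properly from \eqref{1111aaaa}, \eqref{eq=VC}, and the support properties of $1-\psi^2$ and $[\Delta_x,\psi]$. Once $G(t,\tau)$ vanishes for $t-\tau\leq T_1(b)$ and has $x$-support in $\{|x|\leq b\}$, the Picard iterates stabilise on bounded time intervals, giving existence, uniqueness, the support and vanishing statements, and smoothness via \eqref{eq=VB}, \eqref{eq=prop8B}. The Duhamel verification for part~2 is also correct, using $W(\tau,\tau)=0$, $\partial_tW(t,\tau)|_{t=\tau}=\mathrm{Id}$, and $P(t)W(t,\tau)=G(t,\tau)$.

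One small wording caution: the phrase ``the whole construction is translation invariant under simultaneous shifts of $t$ and $s$'' is a slight overstatement, since $P(t)$, $V(t,s)$, $W(t,s)$ and $G(t,s)$ genuinely depend on $t$ through the metric $a(t,x)$ (which is only $T$-periodic, not constant). What you actually need, and what you do establish, is that the \emph{time-gap lemma} and the $x$-support property of $G(t,\tau)$ hold for every pair $(t,\tau)$, because the cut-offs $\xi$, $T_1$, $T_2$ depend only on $t-\tau$ in the region $\{|x|\leq b\}$. That is enough to run the iteration for arbitrary $s\geq 0$; no translation invariance of the operators themselves is required.
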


Let $r\in{\R}$. We denote  $H^{r,A_1}_{b,s}({\R}^{1+n})$ the space defined by $g\in H^{r,A_1}_{b,s}({\R}^{1+n})$ if
\[\begin{array}{l}(i)\ e^{-A_1t}\phi\in H^{r}({\R}^{1+n}),\\
(ii)\ g(t,x)=0\ \textrm{for }t\leq s\ \textrm{or }\vert x\vert\geq b.\end{array}\]
 The spaces $H^r_b({\R}^n)$, $H^r_b({\R}^{1+n})$, $\mathcal C^\infty_b({\R}^{n})$ and $\mathcal C^\infty_b({\R}^{1+n})$ are the subspaces of $H^r({\R}^n)$, $H^r({\R}^{1+n})$, $\mathcal C^\infty({\R}^{n})$ and $\mathcal C^\infty({\R}^{1+n})$, respectively, consisting of functions that vanish for $\vert x\vert\geq b$. The global energy estimate  (\ref{eq=prop2A}) implies that, for $A_1>A$ (with $A$ the constant in  (\ref{eq=prop2A})) and for $\chi\in{\CII}$, we have \begin{equation}\label{1}\chi V(t,s)\in\mathcal L(L^2_b,H^{1,A_1}_{b,s}({\R}^n)).\end{equation}
 Throughout  this section we assume  $A_1>A$.
\subsection{Properties of the  Fourier-Bloch-Gelfand transform on the spaces $H^{r,A_1}_{b,s}({\R}^{1+n})$ for $s\geq0$}
In this subsection we will recall some properties of the  Fourier-Bloch-Gelfand transform on the spaces  $H^{r,A_1}_{b,s}({\R}^{1+n})$. Vainberg established  in \cite{V2}  these results for $s=0$. All these properties hold for   $s>0$.
We denote by $F$, the Fourier-Bloch-Gelfand transform defined on $H^{r,A_1}_{b,s}({\R}^{1+n})$ such that, for $\textrm{Im}(\theta)\geq A_1T$, we have
\[F(\phi)(t,\theta)=\sum_{k=-\infty}^{+\infty}\phi(kT+t)e^{ik\theta},\quad\phi\in{\BI}.\]
Let $\phi\in H^{r,A_1}_{b,s}({\R}^n)$. For $ \textrm{Im} (\theta)>A_1T$, we write
\[\hat{\phi}(t,\theta)=F(\phi)(t,\theta).\]
\begin{prop}\label{p8}\emph{(Lemma 1, \cite{V2})}
Let $\phi\in H^{r,A_1}_{b,s}({\R}^{1+n})$. Then,  for $ \textrm{Im} (\theta)>A_1T$ the following assertions hold:\\
1) for any $B>0$ the operator
\[F\ :\ H^{r,A_1}_{b,s}({\R}^{1+n})\rightarrow H^r_b([s-B,s+B]\times {\R}^n)\]
is bounded and  depends analytically on $\theta$.\\
2)\[\hat{\phi}(t,\theta+2\pi)=\hat{\phi}(t,\theta);\]
3)\[\hat{\phi}(t+T,\theta)=e^{-i\theta}\hat{\phi}(t,\theta),\]
and hence if $v(t,\theta)=e^{\frac{it\theta}{T}}\hat{\phi}(t+T,\theta)$, 
$v(t+T,\theta)=v(t,\theta)$.\\
4)Let $\alpha> A_1T$, $c\in{\R}$. If  $d_{\alpha,c}$ is the interval $[i\alpha+c,i\alpha+c+2\pi]$, then
\begin{equation}\label{eq=VF}\phi(t)=\frac{1}{2\pi}\int_{d_{\alpha,c}}F(\phi)(t,\theta)\d\theta,\quad t\in{\R}.\end{equation}
\end{prop}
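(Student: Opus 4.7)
The approach is to establish (1) first, which is the only substantive claim, and then derive (2), (3), (4) by straightforward manipulations of the defining series.

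For (1), set $\tilde\varphi(t,x) = e^{-A_1 t}\varphi(t,x)$, so that by definition of $H^{r,A_1}_{b,s}(\R^{1+n})$, the function $\tilde\varphi$ belongs to $H^r(\R^{1+n})$ with support in $\{t > s,\,|x| < b\}$. Rewriting the general term gives
\[\varphi(kT+t,\cdot)\,e^{ik\theta} = e^{A_1(kT+t)}\tilde\varphi(kT+t,\cdot)\,e^{ik\theta},\]
and since $\varphi(kT+t,\cdot) = 0$ unless $kT+t > s$, the sum effectively runs over $k > (s-t)/T$, a set bounded below uniformly in $t \in [s-B, s+B]$. For $\textrm{Im}(\theta) > A_1T$, the weight $|e^{A_1 kT}e^{ik\theta}| = e^{k(A_1T - \textrm{Im}(\theta))}$ decays geometrically as $k \to +\infty$. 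Applying Cauchy--Schwarz and integrating over $(t,x) \in [s-B,s+B]\times\R^n$ yields, after the change of variables $\tau = kT+t$ in each term and using that the translated windows $[s-B+kT,\,s+B+kT]$ overlap only a uniformly bounded number of times (of order $B/T$) inside $\{\tau > s\}$, a bound of the form
\[\|F(\varphi)\|_{H^r_b([s-B,s+B]\times\R^n)} \leq C(B,\textrm{Im}(\theta))\,\|\varphi\|_{H^{r,A_1}_{b,s}(\R^{1+n})}.\]
The space-time derivatives $\partial_t^j\partial_x^\beta F(\varphi) = \sum_k \partial_t^j\partial_x^\beta\varphi(kT+t,x)\,e^{ik\theta}$ are handled in the same way, with the Leibniz rule producing only lower-order contributions when derivatives fall on the weight $e^{A_1 t}$. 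Analyticity in $\theta$ on $\{\textrm{Im}(\theta) > A_1T\}$ follows since each partial sum is entire in $\theta$ and the above estimate is uniform on compact subsets of the half-plane (where the geometric decay rate is uniform), so Weierstrass's theorem gives holomorphy of the limit.

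For (2), $e^{ik(\theta+2\pi)} = e^{ik\theta}$ for every $k \in \mathbb{Z}$, so the series is term-wise invariant under $\theta \mapsto \theta + 2\pi$. For (3), the shift $j = k+1$ gives
\[\hat\varphi(t+T,\theta) = \sum_k \varphi((k+1)T+t,\cdot)\,e^{ik\theta} = e^{-i\theta}\sum_j \varphi(jT+t,\cdot)\,e^{ij\theta} = e^{-i\theta}\hat\varphi(t,\theta),\]
and then $v(t+T,\theta) = e^{i(t+T)\theta/T}\hat\varphi(t+2T,\theta) = e^{i(t+T)\theta/T}e^{-i\theta}\hat\varphi(t+T,\theta) = e^{it\theta/T}\hat\varphi(t+T,\theta) = v(t,\theta)$. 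For (4), parametrize the segment by $\theta = i\alpha + c + \tau$ with $\tau \in [0,2\pi]$ and exchange sum and integral (justified by the uniform convergence from (1)):
\[\frac{1}{2\pi}\int_{d_{\alpha,c}}F(\varphi)(t,\theta)\,d\theta = \sum_k \varphi(kT+t,\cdot)\,e^{-k\alpha+ikc}\cdot\frac{1}{2\pi}\int_0^{2\pi}e^{ik\tau}\,d\tau = \varphi(t,\cdot),\]
since $\frac{1}{2\pi}\int_0^{2\pi}e^{ik\tau}\,d\tau = \delta_{k,0}$.

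The main obstacle lies entirely in part (1): carefully tracking the support condition $t > s$ through the translations $t \mapsto kT+t$, and verifying that the $H^r$-estimate for mixed space-time derivatives respects the claimed bound with a constant depending only on $B$ and $\textrm{Im}(\theta)$, uniformly on compact subsets of $\{\textrm{Im}(\theta) > A_1T\}$. Once these technicalities are in place, (2)--(4) are purely algebraic consequences of the definition of $F$.
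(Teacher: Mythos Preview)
Your proof is correct. The paper itself gives no proof of this proposition: it is stated with the attribution ``(Lemma~1, \cite{V2})'' and the surrounding text explicitly says that the results of this subsection are recalled from Vainberg's paper, with the extension from $s=0$ to $0\le s<T$ being immediate. Your direct argument---geometric decay of the weights $e^{k(A_1T-\operatorname{Im}\theta)}$ combined with a Cauchy--Schwarz/overlap estimate for part~(1), and elementary series manipulations for (2)--(4)---is exactly the kind of verification one expects for a Fourier--Bloch--Gelfand transform, and is presumably what the cited reference contains. One minor remark: your ``Leibniz rule producing only lower-order contributions'' comment in part~(1) is fine because $t$ ranges over the bounded interval $[s-B,s+B]$, so the factor $e^{A_1 t}$ and its derivatives are uniformly bounded there; it might be worth making that dependence on $B$ explicit, since it is the reason the constant $C(B,\operatorname{Im}\theta)$ carries a $B$.
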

Denote by $H^{r}_{b,s,per}({\R}^{1+n})$ the norm closure of the subspace of   $ H^{r}([s,s+T]\times{\R}^n)$ consisting of the infinitely differentiable functions in $[s,s+T]\times{\R}^n$ which are $T$-periodic with respect to $t$ and vanish for $\vert x\vert\geq b$. Let $F'$ be the operator defined by
\[F'(\phi)(t,\theta)=e^{\frac{i\theta t}{T}}F(\phi)(t,\theta).\]
The following result is a trivial consequence of  Proposition \ref{p8}.
\begin{prop}\label{p9}
For $ \textrm{Im}( \theta)>A_1T$ the operator
\[F'\ :\ H^{r,A_1}_{b,s}({\R}^{1+n})\rightarrow H^r_{b,s,per}({\R}^{1+n})\]
is bounded and  depends analytically on $\theta$.\end{prop}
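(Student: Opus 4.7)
The plan is to derive the proposition as a direct consequence of Proposition \ref{p8}. Let $\phi\in H^{r,A_1}_{b,s}({\R}^{1+n})$ and set $F'(\phi)(t,\theta)=e^{i\theta t/T}\hat\phi(t,\theta)$. I would verify in turn that $F'(\phi)(\cdot,\theta)$ is $T$-periodic in $t$, that it vanishes for $\vert x\vert\geq b$, that it belongs to $H^r([s,s+T]\times{\R}^n)$ with norm controlled by $\Vert\phi\Vert_{H^{r,A_1}_{b,s}}$, and that the resulting map is analytic in $\theta$.

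First, $T$-periodicity follows directly from Proposition \ref{p8}(3): since $\hat\phi(t+T,\theta)=e^{-i\theta}\hat\phi(t,\theta)$, one computes
\[F'(\phi)(t+T,\theta)=e^{i\theta(t+T)/T}\hat\phi(t+T,\theta)=e^{i\theta t/T}e^{i\theta}e^{-i\theta}\hat\phi(t,\theta)=F'(\phi)(t,\theta).\]
The spatial support condition is inherited from $\phi$ itself, since $\hat\phi(t,\theta,x)$ is defined termwise by a series in which each summand vanishes for $\vert x\vert\geq b$. For boundedness I would apply Proposition \ref{p8}(1) with some $B>T$, so that $[s,s+T]\subset[s-B,s+B]$; this yields a bounded linear map $\phi\mapsto\hat\phi(\cdot,\theta)$ from $H^{r,A_1}_{b,s}({\R}^{1+n})$ into $H^r_b([s,s+T]\times{\R}^n)$. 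Composition with multiplication by the smooth bounded factor $e^{i\theta t/T}$, whose multiplier norm on the interval $[s,s+T]$ depends continuously on $\theta$, yields the desired boundedness.

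For analyticity in $\theta$, Proposition \ref{p8}(1) already asserts that $\theta\mapsto\hat\phi(\cdot,\theta)$ is analytic as an $H^r_b$-valued function, and multiplication by the entire scalar factor $e^{i\theta t/T}$ preserves analyticity because it defines, for each $\theta$, a bounded operator on $H^r_b([s,s+T]\times{\R}^n)$ depending holomorphically on $\theta$. To conclude that the image actually lies in $H^r_{b,s,per}({\R}^{1+n})$, I would combine the periodicity and spatial support just verified with a standard mollification of $\phi$: smooth approximants in $H^{r,A_1}_{b,s}$ produce, under $F'$, smooth $T$-periodic functions supported in $\vert x\vert\leq b$, and these are dense in $H^r_{b,s,per}$ by definition.

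The only place requiring any care is this mollification step: one must ensure that smoothing $\phi$ preserves both the support condition $\vert x\vert\leq b$ and the growth condition $e^{-A_1 t}\phi\in H^r({\R}^{1+n})$. Since both are preserved by convolution with mollifiers supported in a small ball around the origin in space and having appropriate decay in time, and since the continuity of $F'$ already established propagates the approximation to the image side, no genuine obstacle arises and the proof is essentially immediate from Proposition \ref{p8}.
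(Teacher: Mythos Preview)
Your proposal is correct and matches the paper's approach exactly: the paper simply declares the result ``a trivial consequence of Proposition~\ref{p8}'' and gives no further argument, so your spelling out of periodicity via part~(3), support and boundedness via part~(1), and analyticity via composition with the entire multiplier $e^{i\theta t/T}$ is precisely the intended verification. The mollification step you flag is indeed the only place requiring a moment's thought, but as you observe it presents no real difficulty.
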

Now, we will consider the  Fourier-Bloch Gelfand transformation of operators with a kernel.
\begin{prop}\label{p10}\emph{(Lemma 4, \cite{V2})}
Suppose that the operator
\[ R\ :\ H^{r,A_1}_{b,s}({\R}^{1+n})\rightarrow H^{l,A_1}_{b,s}({\R}^{1+n})\]
is bounded and its kernel has the properties: \\
1) $ R(t+T,\tau+T,x,x_0)=R(t,\tau,x,x_0)$.\\
2) There exists a $T_0>0$ such that
\begin{equation}\label{prop27b}R(t,\tau,x,x_0)=0,\quad\textrm{for}\quad t-\tau\notin [0,T_0].\end{equation}
Then, there exists an operator
\[R(t,s,\theta)\ :\ H^{r}_{b,s,per}({\R}^{1+n})\rightarrow H^{l}_{b,s,per}({\R}^{1+n}),\]
such that $R(t,s,\theta)$ is an entire function on $\theta$ and
$F'(R)=R(t,s,\theta)F'$ for\\
 $ \textrm{Im}(\theta)>A_1T$.
\end{prop}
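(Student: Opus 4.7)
The plan is to compute $F'(R\phi)$ directly, and to use the two hypotheses on the kernel to turn the resulting double series over $k\in\mathbb{Z}$ into a \emph{finite} trigonometric polynomial in $e^{\pm i\theta}$, from which the operator $R(t,s,\theta)$ can simply be read off.

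First I would fix $\phi\in H^{r,A_1}_{b,s}(\R^{1+n})$ with $\textrm{Im}(\theta)>A_1T$ and compute
\[F(R\phi)(t,\theta)=\sum_{k\in\mathbb{Z}}e^{ik\theta}\int R(t+kT,\tau,x,x_0)\,\phi(\tau,x_0)\,\d\tau\,\d x_0.\]
The change of variables $\tau=\sigma+kT$ together with hypothesis~(1) shifts the $kT$-translation off the kernel and onto $\phi$, collapsing the inner sum to $\hat\phi(\sigma,\theta)$:
\[F(R\phi)(t,\theta)=\int R(t,\sigma,x,x_0)\,\hat\phi(\sigma,\theta)\,\d\sigma\,\d x_0.\]
Next I would invoke hypothesis~(2), which confines $\sigma$ to $[t-T_0,t]$, and decompose this interval into finitely many translates of $[s,s+T]$ via $\sigma=\sigma'+kT$ with $\sigma'\in[s,s+T]$ and $k$ in a finite range $[k_-,k_+]$ of length at most $T_0/T+1$. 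Using the quasi-periodicity $\hat\phi(\sigma'+kT,\theta)=e^{-ik\theta}\hat\phi(\sigma',\theta)$ from Proposition~\ref{p8}(3), this yields
\[F(R\phi)(t,\theta)=\int_s^{s+T}\!\int K(t,\sigma',\theta,x,x_0)\,\hat\phi(\sigma',\theta,x_0)\,\d x_0\,\d\sigma',\]
with
\[K(t,\sigma',\theta,x,x_0)=\sum_{k=k_-}^{k_+}R(t,\sigma'+kT,x,x_0)\,e^{-ik\theta}.\]
Multiplying through by $e^{it\theta/T}$ and writing $\hat\phi(\sigma',\theta)=e^{-i\sigma'\theta/T}\tilde\phi(\sigma',\theta)$ with $\tilde\phi=F'\phi$ would then let me define $R(t,s,\theta)$ as the operator on $H^r_{b,s,per}(\R^{1+n})$ whose kernel is $\tilde K(t,\sigma',\theta,x,x_0)=e^{i(t-\sigma')\theta/T}K(t,\sigma',\theta,x,x_0)$; by construction $F'(R\phi)=R(t,s,\theta)F'(\phi)$ for $\textrm{Im}(\theta)>A_1T$.

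Entireness of $R(t,s,\theta)$ in $\theta$ is then immediate, since its kernel is a finite linear combination of $\theta$-independent kernels $R(t,\sigma'+kT,\cdot)$ against the entire exponentials $e^{i(t-\sigma'-kT)\theta/T}$. Periodicity of the output in $t$ can be checked directly from $\tilde K(t+T,\sigma',\theta)=\tilde K(t,\sigma',\theta)$, which follows from hypothesis~(1) applied to each summand (a shift $k\mapsto k-1$ in the sum absorbs the factor $e^{i\theta}$ produced by $e^{iT\theta/T}$), while the boundedness $H^r_{b,s,per}\to H^l_{b,s,per}$ follows from the boundedness of $R$ on $H^{r,A_1}_{b,s}$ together with Propositions~\ref{p8}(1) and~\ref{p9}. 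I expect the main technical point to lie in the very first step, namely the justification of the interchange of sum and integral which produces $\hat\phi(\sigma,\theta)$: one needs the $e^{-A_1t}$-weighted integrability of $\phi$, together with its vanishing on $\{t\leq s\}$ (which truncates the sum from below), in order to dominate the growth of $e^{ik\theta}$ as $k\to+\infty$, and this is exactly what the hypothesis $\textrm{Im}(\theta)>A_1T$ buys.
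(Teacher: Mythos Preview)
The paper does not prove this proposition itself; it is quoted as Lemma~4 of \cite{V2} and the surrounding text explicitly says that for such auxiliary results ``we refer to \cite{V2}''. Your argument is the standard one and is correct: shift the translation from the kernel to $\phi$ using hypothesis~(1), collapse the series to $\hat\phi$, then use the compact $t-\tau$ support from hypothesis~(2) together with the quasi-periodicity of $\hat\phi$ to obtain a finite trigonometric sum in $e^{-ik\theta}$, which is manifestly entire. One small point worth making explicit: the index range $[k_-,k_+]$ a priori depends on $t-\sigma'$, but since the extra terms vanish by~\eqref{prop27b} you may fix a single range of width $\lceil T_0/T\rceil+1$ valid for all $t$ in one period, which is what makes the periodicity check and the boundedness argument go through uniformly.
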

We establish easily the following result.
\begin{prop}\label{p11}
Suppose that the operator
\[R(t,s)\ :\ H^r_b({\R}^n)\rightarrow H^{l,A_1}_{b,s}({\R}^{1+n})\]
is bounded. Then, the operator
\[R(t,s,\theta)=F'(R(t,s))(t,\theta)\ :\ H^r_b({\R}^n)\rightarrow H^{l}_{b,per}({\R}^{1+n})\]
is bounded and $R(t,s,\theta)$ is an entire function on $\theta$ for $ \textrm{Im}(\theta)>A_1T$.\end{prop}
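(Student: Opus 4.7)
The plan is to identify $R(t,s,\theta)$ with the composition $F'(\cdot,\theta)\circ R(t,s)$ and then apply Proposition~\ref{p9} directly.

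First, fix $g\in H^r_b({\R}^n)$. By hypothesis, $w:=R(t,s)g$ lies in $H^{l,A_1}_{b,s}({\R}^{1+n})$ with
\[\|w\|_{H^{l,A_1}_{b,s}}\leq C\,\|g\|_{H^r_b},\qquad C:=\|R(t,s)\|.\]
For any $\theta$ with $\textrm{Im}(\theta)>A_1T$, Proposition~\ref{p9} gives that
$F'(\cdot,\theta):H^{l,A_1}_{b,s}({\R}^{1+n})\to H^l_{b,s,per}({\R}^{1+n})$
is bounded and depends analytically on $\theta$. Since $R(t,s,\theta)g=F'(w)(\cdot,\theta)$ by construction, chaining the two bounds yields
\[\|R(t,s,\theta)g\|_{H^l_{b,s,per}}\leq \|F'(\cdot,\theta)\|\,C\,\|g\|_{H^r_b},\]
which delivers the asserted boundedness for every fixed admissible $\theta$.

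For the analyticity claim, I fix $g$ and observe that $\theta\mapsto R(t,s,\theta)g=F'(R(t,s)g)(\cdot,\theta)$ is an $H^l_{b,s,per}$-valued analytic function on $\{\textrm{Im}(\theta)>A_1T\}$, directly from the analytic dependence in Proposition~\ref{p9}. This is strong (pointwise) analyticity of the operator-valued map $\theta\mapsto R(t,s,\theta)$. To upgrade to norm analyticity, I would control the operator norm locally uniformly in $\theta$ by estimating the Bloch series
\[F'(w)(t,\theta)=e^{i\theta t/T}\sum_{k=-\infty}^{+\infty}w(kT+t)e^{ik\theta}\]
using the weight $e^{-A_1 t}w\in H^l({\R}^{1+n})$: for any compact $K\subset\{\textrm{Im}(\theta)>A_1T\}$, the series converges geometrically in the operator norm, uniformly for $\theta\in K$. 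A standard result on Banach-space-valued holomorphy then promotes pointwise analyticity to norm analyticity of $\theta\mapsto R(t,s,\theta)$.

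The only mildly delicate point is establishing this locally uniform norm convergence of the Bloch sum, which rests squarely on the exponential weight built into $H^{l,A_1}_{b,s}$; once that is in place, both boundedness and analyticity are simply inherited from Proposition~\ref{p9} via composition with the fixed operator $R(t,s)$, so no genuine obstacle is anticipated beyond a careful bookkeeping of the weighted estimates.
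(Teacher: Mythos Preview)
Your proposal is correct and matches the paper's intent: the paper gives no proof at all, merely prefacing the proposition with ``We establish easily the following result,'' which signals exactly the composition argument you carry out---apply Proposition~\ref{p9} to the image $R(t,s)g\in H^{l,A_1}_{b,s}$ and read off boundedness and analytic dependence on $\theta$. Your additional discussion of upgrading strong analyticity to norm analyticity via locally uniform geometric convergence of the Bloch series is more than the paper supplies, but it is the right way to make the statement fully rigorous.
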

Set $\psi_1,\psi_2\in{\CII}$. We will analyze the properties of the Fourier-Bloch-Gelfand transform of $\psi_1 V(t,s)\psi_2$.
\subsection{  Fourier-Bloch-Gelfand transform of the operator  $\psi_1 V(t,s)\psi_2$}
In this subsection, our aim is to analyze the composition of $F'$  with the operator $\psi_1 V(t,s)\psi_2$. More precisely, we will show that, for $0\leq s<T$ and $t\geq T_2(b)+T$, $F'(\psi_1 V(t,s)\psi_2)(t,\theta)$, initially defined for $ \textrm{Im}(\theta)>A_1T$, admits a meromorphic continuation satisfying some properties that we will precise. From these results we will establish the asymptotic behavior as $t\rightarrow+\infty$ of the local energy associated to (\ref{eq=lepbA}).  In \cite{V2}, Vainberg  proves the properties of $F'(\psi_1 V(t,s)\psi_2)(t,\theta)$, when  $s=0$ and $t\geq T_2(b)$. But all these results remain true when $0\leq s<T$ and $t\geq T_2(b)+T$.

Denote $\mathbb C'=\{ z\in\mathbb C\ :\ z\neq 2k\pi-i\mu,\ k\in\mathbb{Z},\ \mu\geq0\}$.\\
\begin{defi}Let $H_1$ and $H_2$ be Hilbert spaces. A family of bounded operators
 $Q(t,s,\theta):H_1\rightarrow H_2$ is said to be meromorphic in a domain $D\subset\mathbb C$, if $Q(t,s,\theta)$ is meromorphically dependent on $\theta$ for $\theta\in D$ and for any pole  $\theta=\theta_0$  the coefficients of the negative powers of  $\theta-\theta_0$ in the appropriate Laurent extension are finite-dimensional operator. \end{defi}
 
\begin{defi} We say that the family of operators $Q(t,s,\theta)$, which are $\mathcal C^\infty$ and $T$-periodic with respect to $t$, has the property $(S)$ if: 1) when $n$ is odd, the operators $Q(t,s,\theta)$, $\theta\in\mathbb C$ and its derivatives with respect to $t$ are bounded and form a finitely meromorphic family ; 2) When $n$ is even the operators $Q(t,s,\theta)$ and its derivatives with respect to $t$ are bounded, finitely meromorphic on $\theta$ for $\theta\in\mathbb{C}'$ and, in a neighborhood of $\theta=0$, $Q(t,s,\theta)$ has the following form 
 \begin{equation}\label{asymp}Q(t,s,\theta)=B(t,s,\theta)\log\theta+\sum_{j=1}^{m}B_j(t,s)\theta^{-j}+C(t,s,\theta),\end{equation}
where the operators $B(t,s,\theta)$ and $C(t,s,\theta)$  depend analytically on  $\theta$ for $\vert\theta\vert<\epsilon_0$,  $\log$ is the logarithm defined on  $\mathbb{C}\setminus i{\R}^-$, and for all $j$ the operators $B_j(t,s)$ and $(\partial_\theta B(t,s,\theta))_{\vert\theta=0}$ are finite dimensional. Moreover, $B(t,s,\theta)$, $C(t,s,\theta)$  and the the operators $B_j(t,s)$ are 
$\mathcal C^\infty$ and $T$-periodic with respect to $t$  and depend on  $s$.\end{defi}

Denote by $G_s$ and $W_s$ the operators defined for all $\phi\in H^{1,A_1}_{b,s}({\R}^{1+n})$, by 
\[G_s(\phi)(t)=\int_s^tG(t,\tau)\phi(\tau)d\tau,\quad W_s(\phi)(t)=\int_s^tW(t,\tau)\phi(\tau)d\tau.\]
We recall some results about the properties of the composition of $F'$ and the operators $G_s$, $G(t,s)$, $\chi W_s$ and $\chi N(t,s)$, with $\chi\in{\CII}$. 

\begin{Thm}\label{t5}\emph{(Theorem 2, \cite{V2})} Let $0\leq s<T$. The operator
\begin{equation}\label{thm}G_s\ :\ H^{1,A_1}_{b,s}({\R}^{1+n})\rightarrow H^{2,A_1}_{b,s}({\R}^{1+n})\end{equation}
is bounded, and for $ \textrm{Im}(\theta)>A_1T$ the relation $F'(G_s)(t,\theta)=G_s(t,s,\theta)F'$ holds, where 
\[G_s(t,s,\theta)\ :\ H^{1}_{b,s,per}({\R}^{1+n})\rightarrow H^{2}_{b,s,per}({\R}^{1+n})\]
is an operator with the property $(S)$.\end{Thm}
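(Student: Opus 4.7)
My plan is to follow the proof of Theorem 2 of \cite{V2}, originally given for $s=0$, and check that it extends to the shifted range $0\leq s<T$. The boundedness
\[G_s:H^{1,A_1}_{b,s}({\R}^{1+n})\rightarrow H^{2,A_1}_{b,s}({\R}^{1+n})\]
reduces to noting that
\[G(t,\tau)=[\Delta_x,\psi]N(t,\tau)+(1-\psi^2)[P(t),\xi(t,\tau)]V(t,\tau)\]
is pre-factored by cutoffs supported in $\vert x\vert\leq b$, so the image of $G_s$ vanishes for $\vert x\vert\geq b$; the exponential weight in $e^{A_1(t-s)}$ is absorbed using \propref{p2} together with \eqref{1}; and the gain of one spatial derivative is furnished by the smoothing operator $\sin\Lambda(\cdot)/\Lambda$ present in $N$, and, for the second summand, by the fact that $V(t,\tau)$ maps $L^2_b$ into $H^{1,A_1}_{b,s}$ via \eqref{1}.

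Next I would establish the intertwining identity $F'(G_s)(t,\theta)=G_s(t,s,\theta)F'$ for $\mathrm{Im}(\theta)>A_1T$ by splitting $G=G_1+G_2$ with $G_2(t,\tau)=(1-\psi^2)[P(t),\xi(t,\tau)]V(t,\tau)$. By the support property \eqref{eq=prop8A} the kernel of $G_2$ vanishes for $t-\tau\notin[T_1(b),T_2(b)]$ and its $T$-periodicity in $(t,\tau)$ follows from \eqref{eq=prop1A}, so \propref{p10} applies and produces an entire operator-valued function $G_{s,2}(t,s,\theta)\in\mathcal L(H^1_{b,s,per}({\R}^{1+n}),H^2_{b,s,per}({\R}^{1+n}))$. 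The term $G_1(t,\tau)=[\Delta_x,\psi]N(t,\tau)$ is more subtle since $N$ propagates by the free half-wave operator and its kernel lacks compact $t-\tau$ support when $n$ is even. Inserting the definition of $N$ and interchanging the orders of integration reduces the action of $G_{s,1}(\phi)(t)=\int_s^tG_1(t,\tau)\phi(\tau)d\tau$ to an operator whose Fourier-Bloch-Gelfand transform collapses the $k$-sum into a cut-off free resolvent $[\Delta_x,\psi]R_0(\lambda(\theta))\psi$ composed with pieces that are entire in $\theta$, thanks to the compactness of the $\sigma$-support of $[P(\sigma),\xi(\sigma,s)]$.

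The main obstacle is then the verification of property $(S)$ near $\theta=0$ in the even case. In odd dimensions $R_0(\lambda)$ continues holomorphically across zero between compactly supported cutoffs, giving the finitely meromorphic conclusion at once. In even dimensions one must invoke the standard expansion
\[\chi R_0(\lambda)\chi=\tilde B(\lambda)\log\lambda+\sum_{j=1}^{m}\tilde B_j\lambda^{-j}+\tilde C(\lambda),\]
with $\tilde B_j$ and $(\partial_\lambda\tilde B)_{\vert\lambda=0}$ finite-rank (cf.\ \cite{V2,V5,V4}); after the substitution $\lambda=\lambda(\theta)$ and the definition of $F'$ this produces exactly the form \eqref{asymp}. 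The adaptation to $s>0$ amounts to tracking the parameter $s$ through the decomposition: since $s$ enters $G$ only through $V(\cdot,s)$ and $\xi(\cdot,s)$, the coefficients $B(t,s,\theta)$, $B_j(t,s)$ and $C(t,s,\theta)$ inherit smoothness in $s$ and $T$-periodicity in $t$, and the finite-rank property survives composition with these smoothly $s$-dependent operators. Verifying that these properties are indeed preserved across the analytic continuation in $\theta$, together with the local expansion at $\theta=0$, is the only genuinely non-routine point.
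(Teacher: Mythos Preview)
The paper does not give its own proof of this theorem: it is stated as a direct citation of Theorem~2 in \cite{V2}, with the blanket remark preceding Theorems~\ref{t5}--\ref{t7} that the arguments in \cite{V2} for $s=0$ carry over unchanged to $0\leq s<T$. Your outline faithfully reproduces Vainberg's strategy---splitting $G$ into the piece with compact $t-\tau$ support (handled by Proposition~\ref{p10}) and the piece carrying the free propagator whose Fourier--Bloch--Gelfand transform produces the cut-off free resolvent with its logarithmic expansion in even dimensions---so your approach coincides with the one the paper defers to, and is in fact more explicit than the paper's treatment.
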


\begin{Thm}\label{t6}\emph{(Theorem 3, \cite{V2})} Let $0\leq s<T$. For all $c>0$ and for all $r\in{\R}$, the operator
\begin{equation}\label{thm}G(t,s)\ :\ H^{1}_{b}({\R}^{1+n})\rightarrow H^{r,c}_{b,s}({\R}^{1+n})\end{equation}
is bounded and the operator
\[G(t,s,\theta)=F'(G(t,s))(t,\theta)\ :\ H^{1}_{b}({\R}^{1+n})\rightarrow H^{r}_{b,s,per}({\R}^{1+n})\]
defined  for  $ \textrm{Im}(\theta)>A_1T$, has an analytic continuation to the lower half plane with the property $(S)$.
\end{Thm}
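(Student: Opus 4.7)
First I would decompose $G(t,s) = G_1(t,s) + G_2(t,s)$ with
\[ G_1(t,s) = (1-\psi^2)[P(t),\xi(t,s)]V(t,s), \qquad G_2(t,s) = [\Delta_x,\psi]N(t,s), \]
exactly as in the proof of Proposition \ref{p7}, and treat the two pieces separately. Since $(1-\psi^2)$ vanishes for $|x| \geq b-\tfrac{1}{3}$ and $[P(t),\xi(t,s)]$ vanishes for $t-s \notin [T_1(b),T_2(b)]$ on that region, the kernel of $G_1$ has compact support in $t-s$, and by \eqref{eq=VB} it is smooth on this support. Combined with the energy bound \eqref{eq=prop2A}, this yields boundedness $G_1 : H^{1}_{b}(\mathbb{R}^{n}) \to H^{r,c}_{b,s}(\mathbb{R}^{1+n})$ for all $r\in\mathbb{R}$ and $c>0$. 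Proposition \ref{p10}, applied with $T_0 = T_2(b)$, then gives $F'(G_1)(t,s,\theta)$ as an entire family of bounded operators $H^{1}_{b}(\mathbb{R}^n) \to H^r_{b,s,per}(\mathbb{R}^{1+n})$, which trivially has property $(S)$.

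For $G_2$, write
\[ N(t,s)h = \int_s^t \frac{\sin(\Lambda(t-\tau))}{\Lambda}\,\tilde G(\tau,s)h\, d\tau, \qquad \tilde G(\tau,s) := \psi[P(\tau),\xi(\tau,s)]V(\tau,s), \]
where $\tilde G(\tau,s)$ has compact support in $\tau-s\in [T_1(b),T_2(b)]$ and $x$-support in $|x|\leq b-\tfrac{1}{3}$. Since $[\Delta_x,\psi]$ also has compact $x$-support, the only issue for boundedness into $H^{r,c}_{b,s}$ is the large-$t$ behaviour; but on compact $x$-sets the kernel of $N(t,s)$ is smooth by \eqref{eq=prop8B} and its derivatives grow at most polynomially in $t$ (cut-off free propagator in even dimension), so the weight $e^{-ct}$ with any $c>0$ absorbs this growth, giving $G_2 \in \mathcal{L}(H^{1}_{b}(\mathbb{R}^n), H^{r,c}_{b,s}(\mathbb{R}^{1+n}))$.

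The heart of the matter is the analytic continuation of $F'(G_2)(t,s,\theta)$. Since $\tilde G$ has compact time support, $F'(\tilde G(\cdot,s))(t,s,\theta)$ is entire in $\theta$ by Proposition \ref{p10}. The convolution identity behind Proposition \ref{p10}, applied to the time-convolution structure defining $N$, produces
\[ F'(G_2)(t,s,\theta) = [\Delta_x,\psi]\int_0^T \mathcal{N}(t-\tau',\theta)\,F'(\tilde G(\cdot,s))(\tau',s,\theta)\, d\tau', \]
where $\mathcal{N}(\cdot,\theta)$ is the Floquet image of the half-line free propagator $\frac{\sin(\Lambda t)}{\Lambda}\mathbf{1}_{t\geq 0}$. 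Summing the resulting geometric series in $e^{\pm i\Lambda T + i\theta}$ and sandwiching with the cut-offs $[\Delta_x,\psi]$ and $\tilde G$, one identifies $[\Delta_x,\psi]\,\mathcal{N}(\cdot,\theta)\,\tilde G(\tau',s)$ with a cut-off free outgoing resolvent $[\Delta_x,\psi](\Lambda^2 - \mu^2(\theta))^{-1}\tilde G(\tau',s)$ under an analytic mapping $\theta \mapsto \mu(\theta)$. The classical Lax-Phillips/Vainberg analysis of the cut-off free wave resolvent then gives an entire continuation for $n\geq 3$ odd, and for $n\geq 4$ even a meromorphic continuation on $\mathbb{C}'$ together with the expansion \eqref{asymp} near $\theta = 0$, which is exactly property $(S)$.

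The main obstacle I expect is the even-dimensional bookkeeping near $\theta = 0$: after identifying $\mathcal{N}(\cdot,\theta)$ with a cut-off free resolvent, one must verify that the coefficients $B(t,s,\theta)$, $B_j(t,s)$ and $\partial_\theta B(t,s,\cdot)|_{\theta=0}$ in \eqref{asymp} are of finite rank and smooth and $T$-periodic in $t$. Finite rank comes from the fact that the singular part of the free resolvent near $\mu = 0$ is captured, against the compactly $x$-supported cut-offs $[\Delta_x,\psi]$ and $\tilde G$, by finitely many explicit Hankel-type profiles; smoothness and $T$-periodicity in $t$ are inherited from the corresponding properties of $\tilde G(\tau,s)$ and from the structure of $\mathcal{N}(t-\tau',\theta)$ as a Floquet-summed kernel.
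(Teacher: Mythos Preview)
The paper does not supply its own proof of this theorem; it merely quotes Vainberg \cite{V2} and remarks that the extension from $s=0$ to $0\leq s<T$ is routine. Your outline follows Vainberg's strategy (split $G=G_1+G_2$, treat the smoothing piece via compact time support, and trace the $(S)$-structure of $G_2$ back to the cut-off free resolvent through the Fourier--Bloch--Gelfand transform of $\frac{\sin(\Lambda t)}{\Lambda}$), so in that sense the approach is the right one.

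That said, two of your intermediate claims are incorrect as written and would need repair. First, the support assertion for $\tilde G(\tau,s)=\psi[P(\tau),\xi(\tau,s)]V(\tau,s)$ is wrong: since $\psi=0$ for $|x|\leq b-\tfrac{2}{3}$ and $\psi=1$ for $|x|\geq b-\tfrac{1}{3}$, the $x$-support of $\tilde G$ lies in $|x|\geq b-\tfrac{2}{3}$, not in $|x|\leq b-\tfrac{1}{3}$; moreover, because $T_1(|x|),T_2(|x|)$ grow with $|x|$, the $\tau$-support of $\tilde G(\tau,s)h$ is \emph{not} contained in $[s+T_1(b),s+T_2(b)]$ in general. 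What actually localises $G_2$ is the outer factor $[\Delta_x,\psi]$, which has compact $x$-support in the annulus $b-\tfrac{2}{3}\leq|x|\leq b-\tfrac{1}{3}$; combined with \eqref{eq=prop8B} this gives the smooth compactly-supported-in-$x$ kernel you need, but the argument must go through $[\Delta_x,\psi]N$ rather than through $\tilde G$ alone. Second, the identification of the Floquet image $\mathcal N(\cdot,\theta)$ with a single resolvent $(\Lambda^2-\mu(\theta)^2)^{-1}$ via an analytic map $\theta\mapsto\mu(\theta)$ is too coarse: summing the geometric series in $e^{\pm i\Lambda T+i\theta}$ produces factors $(1-e^{\pm i\Lambda T+i\theta})^{-1}$, whose singularities accumulate at $\theta\in 2\pi\mathbb Z-i\mathbb R^+$ (this is precisely why $\mathbb C'$ appears); after sandwiching with the compact cut-offs one obtains not one resolvent but a superposition indexed by the Floquet quasi-energies, and the local expansion \eqref{asymp} near $\theta=0$ has to be read off from the threshold behaviour of the \emph{localised} free propagator (equivalently, the $n$-even logarithmic branch of $\chi(-\Delta-\lambda^2)^{-1}\chi$ near $\lambda=0$). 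The finite-rank and smoothness claims you list at the end are correct in spirit, but they follow only once these two points are fixed.
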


\begin{Thm}\label{t7}\emph{(Theorem 4 and Lemma 8, \cite{V2})} Let $\chi\in{\CII}$ and $0\leq s<T$. The operator
\begin{equation}\label{thm1}\chi W_s\ :\ H^{1,A_1}_{b,s}({\R}^{1+n})\rightarrow H^{0,A_1}_{b,s}({\R}^{1+n}),\end{equation}
is bounded, and for $ \textrm{Im}(\theta)>A_1T$ the relation $F'(W_s)(t,\theta)=W_s(t,s,\theta)F'$ holds, where 
\[\chi W_s(t,s,\theta)\ :\ H^{1}_{b,s,per}({\R}^{1+n})\rightarrow H^{2}_{b,s,per}({\R}^{1+n}),\]
is an operator with the property $(S)$. The operator
\begin{equation}\label{thm}\chi N(t,s)\ :\ H^{1}_{b}({\R}^{1+n})\rightarrow H^{2,A_1}_{b,s}({\R}^{1+n}),\end{equation}
is bounded and the operator
\[\chi N(t,s,\theta)=F'(\chi N(t,s))(t,\theta)\ :\ H^{1}_{b}({\R}^{1+n})\rightarrow H^{2}_{b,s,per}({\R}^{1+n})\]
 defined for $ \textrm{Im}(\theta)>A_1T$, admits an analytic continuation with property $(S)$.
\end{Thm}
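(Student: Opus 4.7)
My plan is to adapt Vainberg's proofs of Theorem 4 and Lemma 8 of \cite{V2} (which treat $s=0$) to arbitrary $s\in[0,T)$, exploiting the $T$-periodicity of $\mathcal{U}(t,\tau)$ provided by Proposition \ref{p1}. Because the cutoffs $\xi$, $\psi$ and the propagator $V(t,\tau)$ all respect this periodicity, the parameter $s$ plays only a book-keeping role and the argument reduces to essentially the same spectral picture as in \cite{V2}.

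First I would prove the boundedness statements. Decompose
\[
\chi W(t,\tau) = \chi\,\xi(t,\tau)\,V(t,\tau) - \chi\,\psi\,N(t,\tau).
\]
On the support of $\chi$ we have $|x|\leq b$, where by construction $\xi(t,\tau,x)=\xi(t-\tau)$ vanishes for $t-\tau>k_0 T$; hence the $\tau$-integration defining $\chi\xi V_s$ runs over a bounded window, and the energy estimate \eqref{eq=prop2A} combined with the inclusion \eqref{1} gives the $H^{1,A_1}_{b,s}\to H^{0,A_1}_{b,s}$ bound once the weight $e^{-A_1 t}$ with $A_1>A$ absorbs the exponential growth of $V$. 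For $\chi\psi N_s$, the identities \eqref{1111aaaa} and \eqref{eq=prop8B} show that the kernel of $N(t,\tau)$ vanishes for $t-\tau<T_1(b)$ and is smooth on $|x|,|x_0|\leq b$; combined with the standard energy estimate for the free sine propagator $\sin(\Lambda(t-\sigma))/\Lambda$ this yields the required bound. The same type of argument gives $\chi N(t,s)\colon H^1_b\to H^{2,A_1}_{b,s}$.

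Next I would verify the intertwining with $F'$. Periodicity of the kernels $W(t+T,\tau+T)=W(t,\tau)$ and $N(t+T,\tau+T)=N(t,\tau)$ follows from \eqref{eq=prop1A} together with the periodicity of $\xi$ and $\psi$. The kernel of $\chi\xi V$ is supported in $t-\tau\in[0,k_0T]$, so Proposition \ref{p10} immediately delivers an entire operator-valued function that intertwines $F'$. For the piece $\chi\psi N$, I would first apply Proposition \ref{p10} to the inner quantity $\psi[P(\sigma),\xi(\sigma,\tau)]V(\sigma,\tau)$, whose kernel is supported in $\sigma-\tau\in[T_1(b),k_0T]$, thereby obtaining an entire family, and then intertwine the outer free sine propagator via Proposition \ref{p11}. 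Composition yields the commutation $F'(\chi W_s)=\chi W_s(t,s,\theta)F'$ on $\mathrm{Im}(\theta)>A_1T$, and likewise for $\chi N(t,s)$.

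The main obstacle is the last step: the analytic continuation of $\chi W_s(t,s,\theta)$ and $\chi N(t,s,\theta)$ out of the upper half-plane together with the verification of property $(S)$, in particular the expansion \eqref{asymp} near $\theta=0$ when $n$ is even. Since the compact-support factors produce entire $\theta$-dependent families, all the nontrivial analytic behavior is concentrated in the Fourier-Bloch-Gelfand transform of the free wave propagator acting on compactly supported data. Here I would invoke the well-known spectral structure of $\sin(\Lambda t)/\Lambda$: in odd dimensions Huygens' principle yields an entire continuation, while in even dimensions one obtains a logarithmic singularity at zero together with finite rank pole terms, matching the template \eqref{asymp}. Transporting this through the periodization then gives property $(S)$ for both $\chi W_s(t,s,\theta)$ and $\chi N(t,s,\theta)$, with the operators $B$, $B_j$ and $C$ inheriting $C^\infty$ $T$-periodic dependence in $t$ and smooth dependence on $s\in[0,T)$. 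This final part is essentially a verbatim reproduction of Vainberg's analysis in \cite{V2}, with the $s$-dependence tracked explicitly through the definitions.
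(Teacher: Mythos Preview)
Your proposal is essentially correct and aligned with the paper's approach, which in fact gives no proof at all for this statement: the paper merely cites Theorem~4 and Lemma~8 of \cite{V2} and remarks beforehand that ``the proofs are similar and we will only give a proof when it is necessary, otherwise we refer to \cite{V2}.'' Your sketch is precisely the adaptation of Vainberg's argument to $0\leq s<T$ that the paper has in mind, and the ingredients you identify (periodicity of the kernels via Proposition~\ref{p1}, compact $t-\tau$ support of $\chi\xi V$ and of $\psi[P,\xi]V$ so that Proposition~\ref{p10} applies, and the spectral analysis of the free propagator $\sin(\Lambda t)/\Lambda$ under the Fourier--Bloch--Gelfand transform to produce the logarithmic structure in even dimensions) are exactly the ones used in \cite{V2}.

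One small clarification: your appeal to Proposition~\ref{p11} for the ``outer free sine propagator'' is slightly imprecise, since Proposition~\ref{p11} treats operators $H^r_b(\mathbb{R}^n)\to H^{l,A_1}_{b,s}(\mathbb{R}^{1+n})$ rather than convolution operators on space-time. In Vainberg's argument the free piece is handled by computing the FBG transform of the localized free resolvent directly (this is where the expansion \eqref{asymp} originates), and then composing with the entire factor coming from the compactly supported part. This is what you describe in your final paragraph, so the overall logic is right; just be aware that the intertwining for the free part is not a formal consequence of Proposition~\ref{p11} but requires the explicit resolvent analysis carried out in \cite{V2}.
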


\begin{defi} We say that the family of operators $Q(t,s,\theta)$, which are $\mathcal C^\infty$ and $T$-periodic with respect to $t$, has the property $(S')$ if:
1) for odd $n$ the operators $Q(t,s,\theta),\ \theta\in\mathbb{C},$ and its derivatives with respect to $t$ form a finitely-meromorphic family; 2) For even $n$ the operators $Q(t,s,\theta)$ and its derivatives with respect to $t$ form a finitely-meromorphic family for $\theta\in\mathbb{C}'$ . Moreover, in a neighborhood of $\theta=0$ in $\mathbb C'$, $Q(t,s,\theta)$ has the form
\begin{equation}\label{eq=VG}Q(t,s,\theta)=\theta^{-m}\sum_{j\geq0}\left(\frac{\theta}{R_{t,s}(\log \theta)}\right)^jP_{j,t,s}(\log \theta)+C(t,s,\theta),\end{equation}
where $C(t,s,\theta)$ is analytic with respect to $\theta$,  $R_{t,s}$ is a polynomial, the $P_{j,t,s}$ are polynomials of order at most $l_j$ and $\log$ is the logarithm defined on $\mathbb{C}\setminus i{\R}^-$. Moreover, $C(t,s,\theta)$ and the coefficients of  the polynomials $R_{t,s}$ and $P_{j,t,s}$ are $\mathcal C^\infty$ and $T$-periodic with respect to $t$ and depend of $s$.\end{defi}
\begin{rem} Notice that if $Q(t,s,\theta)$ satisfies (S') then $\partial_tQ(t,s,\theta)$ satisfies also (S').\end{rem}

\begin{Thm}\label{t8}\emph{(Theorem 5, \cite{V2})} Let $B>A$ with $A$ the constant of estimate  \eqref{eq=prop2A}. Then, there exists $A_2>B$ such that for all $h\in H^{1,B}_{b,s}({\R}^{1+n})$ with $0\leq s<T$, the equation
\begin{equation}\label{11}\phi+\int_s^tG(t,\tau)\phi(\tau)d\tau=h,\end{equation}
is uniquely solvable in the space $H^{1,A_1}_{b,s}({\R}^{1+n})$ for any $A_1\geq A_2$,  and
\begin{equation}\label{12}\Vert \phi\Vert_{H^{1,A_1}_{b,s}({\R}^{1+n})}\leq C(s)\Vert h\Vert_{H^{1,B}_{b,s}({\R}^{1+n})}.\end{equation}\end{Thm}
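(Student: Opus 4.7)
The plan is to invert $I+G_s$ on $H^{1,A_1}_{b,s}({\R}^{1+n})$ by Neumann series, choosing $A_1$ large enough that $G_s$ becomes a strict contraction. The starting point is the pointwise operator bound
\[\|G(t,\tau)\|_{H^1_b\to H^1_b}+\|\partial_tG(t,\tau)\|_{H^1_b\to L^2_b}+\|\partial_\tau G(t,\tau)\|_{H^1_b\to L^2_b}\leq Ce^{A(t-\tau)},\]
with $A$ the constant of \eqref{eq=prop2A}. This follows from the decomposition
\[G(t,\tau)=[\Delta_x,\psi]\,N(t,\tau)+(1-\psi^2)[P(t),\xi(t,\tau)]\,V(t,\tau):\]
both commutators are supported in $\{|x|\leq b\}$, the smoothness of $N(t,\tau,\cdot,\cdot)$ there is recorded in \eqref{eq=prop8B}, and the global energy estimate \eqref{eq=prop2A} controls $V(t,\tau)$ and, via Duhamel, $N(t,\tau)$. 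Moreover $G(t,t)=0$ since $V(t,t)=P_1P^2=0$ and $N(t,t)$ is an integral from $t$ to $t$.

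After the substitution $\tilde\phi=e^{-A_1 t}\phi$, equation \eqref{11} becomes $(I+\tilde G_s)\tilde\phi=\tilde h$ in $H^1({\R}^{1+n})$, where $\tilde G_s\tilde\phi(t)=\int_s^t e^{-A_1(t-\tau)}G(t,\tau)\tilde\phi(\tau)d\tau$. The $H^1\to H^1$ operator kernel $e^{-A_1(t-\tau)}G(t,\tau)$ has norm bounded by $Ce^{-(A_1-A)(t-\tau)}$, with $L^1_\tau$-integral $\frac{C}{A_1-A}$, so Young's convolution inequality yields $\|\tilde G_s\tilde\phi\|_{L^2_tH^1_x}\leq\frac{C}{A_1-A}\|\tilde\phi\|_{L^2_tH^1_x}$. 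To bound $\partial_t\tilde G_s\tilde\phi=-A_1\tilde G_s\tilde\phi+\int_s^t e^{-A_1(t-\tau)}\partial_tG(t,\tau)\tilde\phi(\tau)d\tau$, one rewrites the apparently $O(A_1)$-sized first term by integrating by parts in $\tau$, using $A_1 e^{-A_1(t-\tau)}=\partial_\tau e^{-A_1(t-\tau)}$ together with the boundary contributions vanishing because $G(t,t)=0$ and $\tilde\phi(s)=0$, as
\[-A_1\tilde G_s\tilde\phi(t)=\int_s^t e^{-A_1(t-\tau)}\bigl[\partial_\tau G(t,\tau)\tilde\phi(\tau)+G(t,\tau)\partial_\tau\tilde\phi(\tau)\bigr]d\tau.\]
Applying Young once more, this is bounded in $L^2_{t,x}$ by $\frac{C'}{A_1-A}\|\tilde\phi\|_{H^1({\R}^{1+n})}$. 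Altogether $\|\tilde G_s\tilde\phi\|_{H^1}\leq\frac{C''}{A_1-A}\|\tilde\phi\|_{H^1}$.

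Choose $A_2>B$ so that $C''/(A_2-A)\leq 1/2$. For $A_1\geq A_2$, $I+\tilde G_s$ is invertible on $H^1({\R}^{1+n})$ by Neumann series; the Volterra structure forces each iterate, hence $\tilde\phi$, to vanish for $t\leq s$ and to be supported in $\{|x|\leq b\}$, so $\phi=e^{A_1 t}\tilde\phi$ lies in $H^{1,A_1}_{b,s}$ and
\[\|\phi\|_{H^{1,A_1}_{b,s}}=\|\tilde\phi\|_{H^1}\leq 2\|\tilde h\|_{H^1}=2\|h\|_{H^{1,A_1}_{b,s}}\leq C(s)\|h\|_{H^{1,B}_{b,s}};\]
the last inequality is the continuous embedding $H^{1,B}_{b,s}\hookrightarrow H^{1,A_1}_{b,s}$ for $A_1\geq B\geq 0$, since multiplication by $e^{-(A_1-B)t}$ is $H^1$-bounded on $\{t\geq s\}$. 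The $T$-periodicity \eqref{eq=prop1A} of $a(t,x)$ makes the whole analysis uniform in $s\in[0,T)$.

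The main obstacle is the integration-by-parts step handling the $-A_1\tilde G_s\tilde\phi$ contribution: a direct estimate yields only an $O(1)$ bound, too large to close a Neumann series. The IBP transfers the $A_1$ factor onto $\partial_\tau$-derivatives of the kernel and of $\tilde\phi$, both then controlled with $1/(A_1-A)$ smallness. A supporting technical point is the $H^1_b\to H^1_b$ estimate on $G(t,\tau)$ itself: $[\Delta_x,\psi]$ is first-order in $x$, absorbed by the spatial smoothness of $N(t,\tau)$ on the bounded support of the cutoff from \eqref{eq=prop8B}, while $[P(t),\xi(t,\tau)]V(t,\tau)$ is controlled by \eqref{eq=VB}, (H1), and the energy estimate.
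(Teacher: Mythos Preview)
The paper does not give its own proof of this statement; it is quoted verbatim as Theorem~5 of Vainberg \cite{V2}, and the surrounding text says explicitly that for such results ``we refer to \cite{V2}''. So there is no in-paper argument to compare against, and your task was effectively to supply Vainberg's proof.

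Your approach---reduce \eqref{11} to a fixed-point problem for $I+\tilde G_s$ on the unweighted $H^1$ via the substitution $\tilde\phi=e^{-A_1t}\phi$, then choose $A_1$ large to make $\tilde G_s$ a contraction---is the standard way to handle Volterra equations in exponentially weighted Sobolev spaces, and it is essentially correct here. The integration-by-parts device for the $-A_1\tilde G_s\tilde\phi$ contribution is the right idea and the boundary terms do vanish for the reasons you give.

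One point deserves tightening. After IBP you obtain a term $\int_s^t e^{-A_1(t-\tau)}G(t,\tau)\,\partial_\tau\tilde\phi(\tau)\,d\tau$, but $\partial_\tau\tilde\phi$ lies only in $L^2_{t,x}$, not in $L^2_tH^1_x$; your stated pointwise bound $\|G(t,\tau)\|_{H^1_b\to H^1_b}\leq Ce^{A(t-\tau)}$ is therefore not directly applicable. You need the stronger fact that $G(t,\tau):L^2_b\to L^2_b$ (indeed $L^2_b\to H^k_b$ for all $k$) is bounded on its support in $t-\tau$. This does hold: on $\mathrm{supp}(1-\psi^2)\subset\{|x|<b\}$ one has $\xi=\xi(t-\tau)$, so $[P(t),\xi]=\xi_{tt}+2\xi_t\partial_t$ carries no $x$-derivatives, and in the range $t-\tau\in[T_1(b),T_2(b)]$ where $[P,\xi]\neq0$ the kernel of $V$ is $C^\infty$ by \eqref{eq=VB}; likewise $N(t,\tau)$ has a smooth kernel for $|x|,|x_0|\leq b$ by \eqref{eq=prop8B}. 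With this $L^2_b\to L^2_b$ bound inserted, your Young estimate closes. It is also worth noting that because $[P,\xi]$ is supported in $t-\tau\in[T_1(b),T_2(b)]$, the operator $G(t,\tau)$ (hence all the kernel norms) is in fact uniformly bounded in $t-\tau$, not merely $O(e^{A(t-\tau)})$; the weaker bound you wrote is of course sufficient.
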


The next result is a trivial consequence of Theorems \ref{t4}, \ref{t6} and \ref{t8}.
\begin{prop}\label{p12}  Let $0\leq s<T$ and $A_1\geq A_2$, with $A_2$ the constant of Theorem \ref{t8} for a $B>A$.
Then, there exists an operator \[L(t,s)\ :\ L^2_b\rightarrow H^{1,A_1}_{b,s}({\R}^{1+n})\] such that $L(t,s)$ is bounded and satisfies
\begin{equation}\label{eq=prop9A}L(t,s)h+\int_s^tG(t,\tau)L(\tau,s)hd\tau=-G(t,s)h,\quad h\in L^2_b,\  t\geq T.\end{equation}
\end{prop}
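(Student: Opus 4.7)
The plan is to realise $L(t,s)h$ as the unique solution in $H^{1,A_1}_{b,s}({\R}^{1+n})$ of the Volterra equation $\phi + G_s\phi = -G(\cdot,s)h$ and then to verify that the resulting map $L(t,s)\colon L^2_b\to H^{1,A_1}_{b,s}({\R}^{1+n})$ is bounded. Two ingredients are needed: (a) that $h\mapsto -G(\cdot,s)h$ sends $L^2_b$ boundedly into some $H^{1,B}_{b,s}({\R}^{1+n})$ with $B>A$, so that the right-hand side is admissible data for Theorem~\ref{t8}; and (b) the invertibility of $I+G_s$ on that space with a uniform estimate, which is precisely Theorem~\ref{t8}.

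For (a), I would fix $B>A$ and apply Theorem~\ref{t6} with $r=1$, $c=B$ to get boundedness of $-G(\cdot,s)$ into $H^{1,B}_{b,s}({\R}^{1+n})$; the upgrade from the natural regularity of the data to $L^2_b$ is forced by the explicit structure $G(t,s)=[\Delta_x,\psi]N(t,s)+(1-\psi^2)[P(t),\xi(t,s)]V(t,s)$. Indeed, the factor $V(t,s)$ is controlled by the global energy bound $\chi V(t,s)\in\mathcal L(L^2_b,H^{1,A_1}_{b,s})$, the commutator $[\Delta_x,\psi]$ is of order one but spatially compactly supported, and $N(t,s)$ supplies one derivative of smoothing; together these yield $\Vert G(\cdot,s)h\Vert_{H^{1,B}_{b,s}}\leq C(s)\Vert h\Vert_{L^2_b}$. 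For (b), I would let $A_2$ be the constant from Theorem~\ref{t8} associated with this $B$, fix any $A_1\geq A_2$, and define $L(t,s)h$ as the unique $\phi\in H^{1,A_1}_{b,s}$ solving $\phi+G_s\phi=-G(\cdot,s)h$ guaranteed by Theorem~\ref{t8}; combining its estimate with the bound from (a) gives $\Vert L(t,s)h\Vert_{H^{1,A_1}_{b,s}}\leq C'(s)\Vert h\Vert_{L^2_b}$, i.e.\ the required boundedness. The integral equation in the statement of the proposition is then just a rewriting of $\phi+G_s\phi=-G(\cdot,s)h$ at each $t$, and in particular holds for $t\geq T$.

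Theorem~\ref{t4} enters only to identify, on the dense subspace ${\CII}\subset L^2_b$, the operator $L(t,s)$ just constructed with the smooth solution $\phi_s$ from that theorem (by uniqueness in Theorem~\ref{t8}), ensuring compatibility with the support and regularity properties already recorded for $\phi_s$. The one piece that takes any care is step (a), namely improving Theorem~\ref{t6} from $H^1_b$-data to $L^2_b$-data; once one observes that $G$ always hides either the spatial cutoff of $(1-\psi^2)[P(t),\xi(t,s)]$ or the one-derivative smoothing of $N(t,s)$, the extension is routine, and the proposition collapses to the direct assembly of Theorems~\ref{t4}, \ref{t6} and \ref{t8} advertised in the excerpt.
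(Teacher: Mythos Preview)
Your proposal is correct and is precisely the unpacking of the paper's one-line justification ``trivial consequence of Theorems~\ref{t4}, \ref{t6} and \ref{t8}'': you feed $-G(\cdot,s)h$ into Theorem~\ref{t8} as the right-hand side, read off the bounded solution operator $L(t,s)$, and invoke Theorem~\ref{t4} only to identify it with $\phi_s$ on smooth data. The one place you add genuine content is your step~(a), where you note that Theorem~\ref{t6} is stated for $H^1_b$-data and explain why $G(t,s)$ nonetheless acts boundedly on $L^2_b$; this is the right observation, and it is immediate once one recalls (from the proof of Proposition~\ref{p7} and the surrounding discussion) that the kernel of $G(t,s)$ is $\mathcal C^\infty$ and compactly supported in both spatial variables, so that $G(t,s)$ is in fact smoothing from any negative-order Sobolev space into $H^{r,c}_{b,s}$ for every $r$.
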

In the following, we assume that $A_1\geq A_2$ with $A_2$ the constant of Theorem \ref{t8} for a $B>A$.
We will now recall a result, established by  Vainberg, which will allow us to define the properties of the  Fourier-Bloch-Gelfand transform of $L(t,s)$.
\begin{Thm}\label{t9} \emph{(\cite{V2}, Theorem 9)} Let $H$ be an Hilbert space and let $G(t,s,\theta)\ :\ H\rightarrow H$ be a family of compact operators having the property $(S)$. If there exists $\theta_0$ such that $Id+G(t,s,\theta_0)$ is invertible, then the family of operators $(Id+G(t,s,\theta))^{-1}$ has the property $(S')$.\end{Thm}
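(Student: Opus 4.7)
The plan is to invoke an analytic (meromorphic) Fredholm argument on the regular part of the parameter region and then perform a local finite-dimensional (Grushin/Schur) reduction near $\theta=0$ to extract the exact form \eqref{eq=VG} required by property $(S')$.

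First I would dispose of the ``easy'' region. By property $(S)$, $G(t,s,\theta)$ is a finitely meromorphic family of compact operators on $\mathbb{C}$ (for $n$ odd) and on $\mathbb C'\setminus\{0\}$ (for $n$ even). Since $I+G(t,s,\theta_0)$ is invertible at some $\theta_0$, the analytic Fredholm theorem gives that $(I+G(t,s,\theta))^{-1}$ exists and is finitely meromorphic on the same region, with poles of finite order and finite-dimensional Laurent coefficients. Differentiation of the resolvent identity yields the same conclusion for $\partial_t^k(I+G)^{-1}$, and smoothness plus $T$-periodicity in $t$ and $\mathcal{C}^\infty$-dependence on $s$ are inherited from those of $G$. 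This settles the odd case completely, and settles the even case away from $\theta=0$.

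For $n$ even I must analyze $(I+G(t,s,\theta))^{-1}$ in a punctured neighborhood of $\theta=0$. Using property $(S)$, decompose
\begin{equation*}
G(t,s,\theta)=K(t,s,\theta)+C(t,s,\theta),\qquad K(t,s,\theta)=B(t,s,\theta)\log\theta+\sum_{j=1}^{m}B_j(t,s)\theta^{-j},
\end{equation*}
where $C$ is operator-analytic in $\theta$ near $0$ while the leading-order singular data of $K$ (namely the $B_j$ and $(\partial_\theta B)_{|\theta=0}$) are finite-dimensional. Choose a finite-dimensional subspace $V\subset H$ containing the ranges of all $B_j(t,s)$ and of $(\partial_\theta B(t,s,\theta))_{|\theta=0}$ uniformly in $(t,s)$ on one period (possible by compactness of $[0,T]^2$ and finite-dimensionality of each operator), let $P$ be the orthogonal projection onto $V$, $P^\perp=I-P$, and write
\begin{equation*}
I+G(t,s,\theta)=\begin{pmatrix} I+PGP & PGP^\perp \\ P^\perp GP & I+P^\perp G P^\perp \end{pmatrix}.
\end{equation*}
On the $P^\perp$-block the singular contributions of $K$ are absorbed into $V$ up to an analytic remainder, so $I+P^\perp G P^\perp$ is the identity plus an analytic compact operator in a neighborhood of $0$; by the meromorphic Fredholm theorem, together with invertibility on the regular region, this block is analytically invertible near $\theta=0$ after at most shrinking the neighborhood. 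The Schur complement
\begin{equation*}
M(t,s,\theta)= I+PGP-PGP^\perp(I+P^\perp G P^\perp)^{-1}P^\perp GP
\end{equation*}
is then a finite-dimensional matrix-valued function whose entries are polynomials in $\log\theta$ and in $\theta^{-1}$ plus an analytic remainder. Inverting $M$ by Cramer's rule produces the numerator/denominator structure: the denominator is $\det M = \theta^{-mN}R_{t,s}(\log\theta)\cdot(\text{unit})$ for some polynomial $R_{t,s}$ in $\log\theta$, while the adjugate yields numerator entries that are polynomials in $\log\theta$ of bounded degree. Reassembling $(I+G)^{-1}$ from $M^{-1}$ and the analytic block via the Schur inversion formula yields exactly the asymptotic form \eqref{eq=VG}, with analyticity and $T$-periodicity in $t$ and smooth dependence on $s$ inherited from the constructive formulas.

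The main obstacle is the passage through $\theta=0$ in the even case: one must ensure that the finite-dimensional $V$ can be chosen large enough to carry \emph{all} the non-analytic behavior, so that the ``infinite-dimensional corner'' $I+P^\perp GP^\perp$ is genuinely analytic and invertible near $0$. This is exactly what the finite-dimensionality of $B_j$ and of $(\partial_\theta B)_{|\theta=0}$ in property $(S)$ is designed to provide; once this reduction succeeds, the derivation of \eqref{eq=VG} becomes purely algebraic, the polynomial $R_{t,s}$ emerging as the determinant of the Schur matrix and the $P_{j,t,s}$ as the entries of its adjugate expanded in powers of $\theta$.
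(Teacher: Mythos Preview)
The paper does not supply a proof of this theorem: it is quoted verbatim as Theorem~9 of Vainberg~\cite{V2} and used as a black box, so there is no ``paper's own proof'' against which to compare your attempt.

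Your outline follows the standard (and essentially Vainberg's) strategy: meromorphic Fredholm theory on the regular region, then a finite-rank Grushin/Schur reduction at $\theta=0$ to produce the expansion~\eqref{eq=VG}. Two points need tightening. First, you claim a \emph{single} finite-dimensional subspace $V$ can absorb the ranges of all $B_j(t,s)$ uniformly in $(t,s)$, by compactness of the period square. Compactness alone does not give this: a smoothly varying rank-one family can sweep through an infinite-dimensional subspace. You should either perform the reduction at each fixed $(t,s)$ and check that the resulting $R_{t,s}$, $P_{j,t,s}$ depend smoothly and $T$-periodically on $(t,s)$ (which is all property~$(S')$ asks), or invoke a uniform rank bound plus smoothness to build $V$ locally and patch. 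Second, property~$(S)$ as written here only asserts that $(\partial_\theta B)_{|\theta=0}$ is finite-dimensional, not $B(t,s,0)$ itself; if $B(t,s,0)$ has infinite rank then $P^\perp G P^\perp$ still contains $P^\perp B(t,s,0)P^\perp\,\log\theta$, which is not analytic near $0$, and your invertibility of the infinite-dimensional corner by analytic Fredholm fails. In Vainberg's original setting the full logarithmic coefficient has finite rank; you should state and use that hypothesis explicitly rather than relying on the (possibly abbreviated) restatement given in this paper.
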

 Applying this result, we deduce the following.
 \begin{Thm}\label{t10}
Let $0\leq s<T$. The operator
 \[L(t,s,\theta)\ :\ L^2_b\rightarrow H^1_{b,s,per}({\R}^{n+1}),\]
defined originally  for $ \textrm{Im}(\theta)>A_1T$ by the relation
 \[L(t,s,\theta)=F'(L(t,s))(t,\theta),\]
 admits an analytic continuation having the property $(S')$.\end{Thm}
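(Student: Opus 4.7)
The plan is to reduce Theorem 10 to an application of Theorem 9 together with Theorems 5 and 6, by viewing $L(t,s,\theta)$ as $-(I+G_s(t,s,\theta))^{-1}G(t,s,\theta)$. First I would apply the transform $F'$ to both sides of the integral equation \eqref{eq=prop9A} of Proposition \ref{p12}. By Theorem \ref{t5}, $F'\circ G_s = G_s(t,s,\theta)\circ F'$ with $G_s(t,s,\theta)$ bounded on $H^1_{b,s,per}({\R}^{1+n})$ and of property $(S)$, while by Theorem \ref{t6}, $F'\circ G(t,s) = G(t,s,\theta)\circ F'$ with $G(t,s,\theta): L^2_b \to H^1_{b,s,per}({\R}^{1+n})$ analytic with property $(S)$. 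Hence, for $\textrm{Im}(\theta)>A_1T$,
\begin{equation*}
(I+G_s(t,s,\theta))\, L(t,s,\theta) = -G(t,s,\theta),
\end{equation*}
so the problem reduces to inverting $I+G_s(t,s,\theta)$ on $H^1_{b,s,per}({\R}^{1+n})$ and analyzing the resulting family.

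Next I would verify the hypotheses of Theorem \ref{t9} for the family $G_s(t,s,\theta)$. Compactness on $H^1_{b,s,per}({\R}^{1+n})$ follows from Theorem \ref{t5}, which asserts boundedness $H^1_{b,s,per}\to H^2_{b,s,per}$, combined with the compact embedding $H^2_{b,s,per}\hookrightarrow H^1_{b,s,per}$ (the functions are supported in $|x|\le b$ and are $T$-periodic in $t$, so Rellich applies on the compact fundamental domain). Invertibility of $I+G_s(t,s,\theta_0)$ at some point $\theta_0$ with $\textrm{Im}(\theta_0)>A_1T$ comes from Theorem \ref{t8}: that theorem provides unique solvability of \eqref{11} in $H^{1,A_1}_{b,s}({\R}^{1+n})$, which via the intertwining $F'(I+G_s) = (I+G_s(t,s,\theta))F'$ (plus the isomorphism given by Proposition \ref{p8}, part 4) translates into invertibility of $I+G_s(t,s,\theta_0)$ on $H^1_{b,s,per}$. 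Theorem \ref{t9} then yields that $(I+G_s(t,s,\theta))^{-1}$ is defined on a meromorphic continuation to all of $\mathbb C$ (odd $n$) or to $\mathbb C'$ (even $n$) and has property $(S')$.

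The final step is to show that
\begin{equation*}
L(t,s,\theta) = -(I+G_s(t,s,\theta))^{-1} G(t,s,\theta)
\end{equation*}
inherits property $(S')$ from the factor on the left and the factor $G(t,s,\theta)$ on the right, which by Theorem \ref{t6} has property $(S)$. Since property $(S)$ is a refinement of property $(S')$ (the logarithmic/finite-pole structure \eqref{asymp} is a special case of the expansion \eqref{eq=VG}), and since the class of operator families with property $(S')$ is closed under composition with bounded finitely-meromorphic families — the pole orders add, the coefficients of the singular parts remain finite-dimensional because they are obtained as compositions where at least one factor has finite-dimensional residues, and the log-expansion \eqref{eq=VG} is preserved by direct multiplication of the series — the composition has property $(S')$. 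The same argument applied to $t$-derivatives (using the remark that $\partial_t$ preserves $(S')$, and that Theorems \ref{t5}, \ref{t6} give the same properties for the $t$-derivatives) covers the differentiated statement. The ${\mathcal C}^\infty$ and $T$-periodicity in $t$, together with the dependence on $s$, are transmitted from the factors.

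The main obstacle I expect is the bookkeeping in the last step: verifying that the precise expansion \eqref{eq=VG} is stable under composition with an operator whose own expansion is of the form \eqref{asymp}, so that the product genuinely has property $(S')$ with polynomials $R_{t,s}$, $P_{j,t,s}$ whose coefficients are ${\mathcal C}^\infty$ and $T$-periodic in $t$. Once the algebraic structure of multiplying $\theta^{-m}\sum (\theta/R)^jP_j(\log\theta) + C$ by $B(t,s,\theta)\log\theta + \sum B_j\theta^{-j} + C$ is organized, this becomes a matching-of-coefficients exercise; the finite-dimensionality of the singular coefficients is automatic because $G_s(t,s,\theta)$ and $G(t,s,\theta)$ each contribute finite-dimensional singular parts.
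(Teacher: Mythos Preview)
Your overall strategy matches the paper's exactly: apply $F'$ to \eqref{eq=prop9A} to get $(I+G_s(t,s,\theta))L(t,s,\theta)=-G(t,s,\theta)$, check compactness of $G_s(t,s,\theta)$ on $H^1_{b,s,per}$ via Theorem~\ref{t5} and Rellich, and then invoke Theorem~\ref{t9}. The paper also takes the composition step (that $(S')\cdot(S)$ stays in $(S')$) for granted, so your last paragraph is not where the real work is.

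The gap is in your invertibility argument. You write that unique solvability of \eqref{11} in $H^{1,A_1}_{b,s}$ ``via the intertwining $F'(I+G_s)=(I+G_s(t,s,\theta))F'$ (plus the isomorphism given by Proposition~\ref{p8}, part 4) translates into invertibility of $I+G_s(t,s,\theta_0)$ on $H^1_{b,s,per}$.'' But Proposition~\ref{p8}(4) is the inversion formula $\phi=\frac{1}{2\pi}\int_{d_{\alpha,c}}F(\phi)(\cdot,\theta)\,d\theta$; it recovers $\phi$ from \emph{all} values of $\theta$ on a horizontal segment, and says nothing about $F'$ at a \emph{fixed} $\theta_0$ being an isomorphism onto $H^1_{b,s,per}$. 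In fact $F'|_{\theta=\theta_0}$ is certainly not injective, and its surjectivity is exactly the nontrivial point. The intertwining only tells you that the range of $I+G_s(t,s,\theta_0)$ contains the range of $F'|_{\theta=\theta_0}$; you still need that the latter is all of $H^1_{b,s,per}$ (or at least dense, after which Fredholm index zero finishes).

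This is precisely where the paper spends its effort. It fixes $\theta_0=iD$ with $D>A_1T$, takes a smooth cutoff $\gamma(t)$ with $\gamma=0$ near $t=s$ and $\gamma=1$ near $t=s+T$, and computes explicitly, for $g\in H^1_{b,s,per}$,
\[
F'(\gamma g)(t,iD)=e^{-Dt/T}\Big[\gamma(t)+(1-e^{-D})^{-1}e^{-D}\Big]\,g(t)=:p(t)\,g(t),
\]
then checks that $p$ extends to a strictly positive $T$-periodic $\mathcal C^\infty$ function. Hence every $\phi\in H^1_{b,s,per}$ equals $F'(\gamma g)|_{\theta=iD}$ with $g=\phi/p$, and the equation $(I+G_s(t,s,iD))\psi=\phi$ is solved by $\psi=F'\big((I+G_s)^{-1}(\gamma g)\big)$. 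That is the missing ingredient in your proposal; once you supply it, the rest of your outline goes through as written.
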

 \begin{proof}
We apply the operator $F'$ to both sides of (\ref{eq=prop9A}). It follows from Theorems \ref{t6} and \ref{t7}, and Propositions \ref{p11} and \ref{p12}, that, for $ \textrm{Im}(\theta)>A_1T$, $F'(L(t,s))(t,\theta)$ satisfies
 \begin{equation}\label{thm24a} (Id+G_s(t,s,\theta))F'(L(t,s))(t,\theta)=-G(t,s,\theta).\end{equation}
We consider the operator $G_s(t,s,\theta)$ acting in the spaces
 \begin{equation}\label{thm24b}G_s(t,s,\theta)\ :\  H^1_{b,s,per}({\R}^{1+n})\rightarrow H^1_{b,s,per}({\R}^{1+n}).\end{equation}
It follows from Theorem \ref{t5} that (\ref{thm24b}) is compact. Consequently, we deduce from (\ref{thm24a}), Theorem \ref{t10} and the properties of  operators $G_s(t,s,\theta)$ and $G(t,s,\theta)$, established in Theorems \ref{t5} and \ref{t6}, that Theorem \ref{t10} is valid if we  show that there exists  $D>A_1T$ such that for $\theta=iD$ the operator $(Id+G_s(t,s,\theta))$ is invertible. To prove the latter it clearly suffices to show that for some $D>A_1T$ and for $\theta=iD$ the equation
\begin{equation}\label{thm24c}(Id+G_s(t,s,\theta))\psi=\phi,\quad \phi,\psi\in H^1_{b,s,per}({\R}^{1+n}),\end{equation}
 is solvable for all $\phi$. Let $g\in  H^1_{b,s,per}({\R}^{1+n})$, and $\gamma\in\mathcal C^\infty({\R})$ be such that $0\leq\gamma\leq1$, $\gamma(t)=0$ for $t\leq s+\frac{T}{2}$, $\gamma(t)=1$ for $t\geq s+\frac{2T}{3}$. We see from Theorem \ref{t8} that the equation
 \[\phi_1+\int_s^tG(t,\tau)\phi_1(\tau)d\tau=\gamma g,\]
has a unique solution $\phi_1\in H^{1,A_1}_{b,s}({\R}^{1+n})$. Theorem \ref{t5} implies that for $ \textrm{Im}(\theta)>A_1T$, the equation (\ref{thm24c}) has a unique solution
 \[\psi=F'(\phi_1)\in H^1_{b,s,per}({\R}^{1+n})\quad \textrm{for}\quad \phi=F'(\gamma g).\]
 Set $D>A_1T$. For the proof of the theorem it suffices to show that for any $\phi\in H_{b,s,per}^1({\R}^{1+n})$, we can choose $g\in H^1_{b,s,per}({\R}^{1+n})$ such that
 \begin{equation}\label{thm24d}\phi=[F'(\gamma g)]_{|\theta=iD}.\end{equation}
For $ \textrm{Im}(\theta)>A_1T$ and $t\in[s,s+T]$, we have
 \[\begin{array}{lll}F'(\gamma g)(t,\theta)&=&e^{\frac{i\theta t}{T}}\sum_{k=0}^{+\infty}(\gamma g)(kT+t)e^{ik\theta}\\
 \ &=&e^{\frac{i\theta t}{T}}\left((\gamma g)(t)+\sum_{k=0}^{+\infty}g(kT+t)e^{ik\theta}\right)\\
 \ \\
 \ &=&e^{\frac{i\theta t}{T}}\left((\gamma g)(t)+g(t)\sum_{k=0}^{+\infty}e^{ik\theta}\right)\\
 \ \\
 \ &=&e^{-\frac{D t}{T}}g(t)\left[\gamma (t)+(1-e^{-D})^{-1}e^{-D}\right].\end{array}\]
 Let $p_1$ be a function defined on $s\leq t\leq s+T$, by
 \[p_1(t)=e^{-\frac{Dt}{T}}\left[\gamma(t)+(1-e^{-D})^{-1}e^{-D}\right].\]
 For all $s\leq t\leq s+\frac{T}{2}$, we have
  \[p_1(t)=e^{-\frac{Dt}{T}}(1-e^{-D})^{-1}e^{-D}\]
  and, for all $s+\frac{3T}{2}\leq t\leq s+T$, we get
  \[\begin{array}{lll}p_1(t)&=&e^{-\left(\frac{D(t-T)}{T}\right)}e^{-D}\left[1+(1-e^{-D})^{-1}e^{-D}\right]\\
  \ &=&e^{-\left(\frac{D(t-T)}{T}\right)}\left[e^{-D}+(1-e^{-D})^{-1}e^{-2D}\right]\\
  \ &=&e^{-\left(\frac{D(t-T)}{T}\right)}\left(1-e^{-D}\right)^{-1}e^{-D}.\end{array}\]
Thus, for all $N\in\mathbb N$, we obtain
  \[\frac{d^Np_1}{\d t^N}(s)=\frac{d^Np_1}{\d t^N}(s+T).\]
  Consequently, we can define a function    $p\in\mathcal C^\infty({\R})$ and $T$-periodic such that
  \[p(t)=p_1(t),\quad t\in[s,s+T].\]
Since $\gamma(t)\geq0$, it follows that $p(t)>0$ for all $t\in{\R}$. Then, for any   $\phi\in H_{b,s,per}^1({\R}^{1+n})$, we have (\ref{thm24d}) if
 \[g(t,.)=\frac{\phi(t,.)}{p(t)}.\]
 \end{proof}

Denote by $R(t,s)$ the operator defined by
\begin{equation}\label{eq=VH}R(t,s)=-\psi N(t,s)+\int_s^tW(t,\tau)L(\tau,s)d\tau.\end{equation}
Then, we can extend the result established by Vainberg for $s=0$, in the following way.
\begin{Thm}\label{t11} 
Let $0\leq s<T$ and let $\chi\in{\CII}$. The operator
\[\chi R(t,s):\ L^2_b\rightarrow {\BI}\]
is bounded. Moreover, the family of operators
\[ R(t,s,\theta):\ L^2_b\rightarrow H^1_{b,per}({\R}^{1+n}),\quad R(t,s,\theta)=F'(\chi R(t,s))(t,\theta)\]
 defined for $ \textrm{Im}(\theta)>A_1T$, admits an analytic continuation to the lower half plane and this continuation has the property $(S')$.\end{Thm}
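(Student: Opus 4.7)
The key observation is that $R(t,s)$ can be simplified to a form that makes both the boundedness claim and the subsequent intertwining under $F'$ transparent. I would first derive the representation $R(t,s) = (1-\xi(t,s))V(t,s)$. To see this, substitute $\phi_s = L(\cdot,s)h$, which is legitimate by \propref{p12} since $L(\cdot,s)h$ satisfies the very integral equation (\ref{eq=thm5A}) characterising $\phi_s$. Then \thmref{t4} gives $V(t,s)h = W(t,s)h + \int_s^t W(t,\tau) L(\tau,s)h \,d\tau$, and inserting $W(t,s) = \xi(t,s)V(t,s) - \psi N(t,s)$ from (\ref{eq=VD}) into the definition (\ref{eq=VH}) produces the cancellation $R(t,s)h = V(t,s)h - \xi V(t,s)h = (1-\xi)V(t,s)h$.

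With this identity in hand, boundedness of $\chi R(t,s)\colon L^2_b \to \BI$ is immediate: on the support of $\chi$, namely $\{|x|\leq b\}$, the cutoff $\xi(t,s,x)$ depends only on $t-s$ by construction, so $\chi(x)(1-\xi(t,s,x))$ is a smooth bounded multiplier and multiplication by it acts continuously on $H^{1,A_1}_{b,s}(\R^{1+n})$. Combining with (\ref{1}), which asserts $\chi V(t,s) \in \mathcal L(L^2_b, H^{1,A_1}_{b,s}(\R^{1+n}))$, gives the desired bound.

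For the Fourier--Bloch--Gelfand continuation I would return to the defining formula (\ref{eq=VH}) and write $\chi R(t,s) = -\chi\psi N(t,s) + \chi W_s\bigl(L(\cdot,s)\,\cdot\,\bigr)$. Applying $F'$ and using the intertwining $F'(\chi W_s) = \chi W_s(t,s,\theta) F'$ from \thmref{t7}, together with the definition $L(t,s,\theta) = F'(L(\cdot,s))(t,\theta)$ of \thmref{t10}, yields, for $\mathrm{Im}(\theta)>A_1T$,
\[
R(t,s,\theta) = -\chi\psi N(t,s,\theta) + \chi W_s(t,s,\theta)\,L(t,s,\theta).
\]
\thmref{t7} states that both $\chi\psi N(t,s,\theta)$ and $\chi W_s(t,s,\theta)$ admit continuations with property $(S)$, while \thmref{t10} supplies a continuation of $L(t,s,\theta)$ with property $(S')$; this furnishes the meromorphic continuation of $R(t,s,\theta)$ to the lower half-plane.

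The main obstacle I anticipate is the closing algebraic step: one must verify that class $(S')$ is preserved under addition of an $(S)$ family and under left composition by an $(S)$ family. Away from $\theta=0$ this is routine, as finitely-meromorphic families are closed under sums and composition. At $\theta=0$, however, one has to multiply the local expansion $B(\theta)\log\theta + \sum_{k=1}^{m_1}B_k\theta^{-k} + C_1(\theta)$ of the $(S)$-factor against the expansion $\theta^{-m_2}\sum_j(\theta/R_{t,s}(\log\theta))^j P_{j,t,s}(\log\theta) + C_2(\theta)$ of the $(S')$-factor and rearrange the result back into the $(S')$ template, carefully tracking finite-dimensionality of the polar coefficients and the polynomial-in-$\log\theta$ structure. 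Once this algebraic lemma is in place, the theorem follows.
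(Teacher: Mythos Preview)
Your proposal is correct and follows essentially the same route as the paper, which simply cites Vainberg's Theorem~11 and says the result follows by combining Theorems~\ref{t7} and~\ref{t10} with estimate~(\ref{1}); your argument makes those combinations explicit, and your identity $R(t,s)=(1-\xi(t,s))V(t,s)$ is a clean way to get boundedness that the paper itself exploits immediately afterward (for $t-s>T_2(b)$ it yields $\chi R=\chi V$). The algebraic closure of property~$(S')$ under addition of and composition with $(S)$-families, which you rightly flag as the main point to verify, is exactly what the paper leaves implicit in the reference to \cite{V2}.
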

Vainberg established the result of Theorem \ref{t11}, in the Theorem 11 of \cite{V2} for $s=0$ and $t> T_2(b)$. Combining this result with Theorems \ref{t7} and \ref{t10}, and with the estimate (\ref{1}), we see that this result holds for $0\leq s<T$ and $t\geq T_2(b)+T$.  
\begin{rem}
Notice that \emph{Theorem \ref{t11}} does not give any information about the dependence of $R(t,s,\theta)$ with respect to $s$.
\end{rem}
Combining the representations (\ref{eq=thm5B}) and (\ref{eq=prop9A}), and applying an argument of density, we get
\begin{equation}\label{eq=VE}V(t,s)h=W(t,s)h+\int_s^tW(t,\tau)L(\tau,s)h\ d\tau,\quad h\in L^2_b.\end{equation}
The properties of $\xi$, for  $t-s>T_2(b)$, imply
\[\chi\xi(t,s)=0,\quad \chi\in\mathcal C^\infty_0(\vert x\vert\leq b).\]
Combining this with the formulas (\ref{eq=VD}), (\ref{eq=VE}) and (\ref{eq=VH}),  for $0\leq s<T$ and $t\geq T_2(b)+T$, we find
\[\chi_1V(t,s)\chi_2=\chi_1R(t,s)\chi_2,\quad\chi_1,\chi_2\in{\CII}.\]
Theorem \ref{t11} implies that, for  $0\leq s<T$ and $t\geq T_2(b)+T$, $F'(\chi_1V(t,s)\chi_2)(t,\theta)$ admits an analytic continuation to the lower half plane with the property $(S')$. This result together with the assumption (H2) will be combined to establish (\ref{eq=thm2A}) for even dimensions.\\
\subsection{ Proof of Theorem \ref{t3}}
The goal of this subsection is to prove  Theorem \ref{t3}. From now on, let $\chi_j,\psi_j\in\mathcal C_0^\infty(\vert x\vert<\rho+1+\frac{j+1}{5}+(j-1)T)$, $j\in\{1,\ldots,4\}$, be such that for all $j\in\{2,3,4\}$,
we have
\begin{equation}\label{dudu}\psi_{j}(x)=\chi_{j}(x)=1, \quad \textrm{ for }\vert x\vert\leq\rho+1+\frac{j}{5}+(j-1)T.\end{equation}
Notice that, for all $j\in\{1,2,3\}$, we obtain
\[\chi_{j+1}=1\ \textrm{ on }\textrm{supp}(\chi_j)+T,\quad \psi_{j+1}=1\ \textrm{ on }\textrm{supp}(\psi_j)+T.\]
Consider $V(t,s,\theta)=F'(V(t,s))(t,\theta)$.
In  subsections 3.1, 3.2 and 3.3 we have generalized the results of \cite{V2} and proved that $V(t,s,\theta)$ satisfies property $(S')$ for $0\leq s<T$ and $t\geq T_2(b)+T$. Following \cite{V2}, we can establish the asymptotic behavior as  $t\to+\infty$ of $\chi_3V(t,s)\psi_3$. Nevertheless, we cannot deduce directly (\ref{eq=thm2A}).
To prove (\ref{eq=locA}), we  establish a link between $R_{\chi_4,\psi_4}(\theta)$ and $V(t,s,\theta)$, and we show how $\rm(H3)$ is related to the meromorphic continuation of $V(t,s,\theta)$. Then, applying the results of \cite{V2}, for $t\geq(k_0+1)T$ and $0\leq s\leq\frac{2T}{3}$, we obtain
\begin{equation}\label{nnn}\norm{\chi_3V(t,s)\psi_3}\leq \frac{C}{(t+1)\ln^2(t+e)},\quad\norm{\chi_1\partial_tV(t,s)\chi_1}\leq \frac{C}{(t+1)\ln^2(t+e)}\end{equation}
 with $C$ independent of $s$ and $t$. Consider the operator defined by
 \[U(t,s)=P_1\mathcal U(t,s)P^1.\] For all $h\in\dot{H}^1({\R}^n)$, $w=U(t,s)h$ is the solution of
\[  \left\{\begin{array}{c}
\partial_t^2w-\Div_{x}(a(t,x)\nabla_{x}w)=0,\\
(w,w_t)_{\vert t=s}=(h,0).\end{array}\right.\]
If $a(t,x)$ is independent of $t$, we have
 \begin{equation}\label{nn}\partial_tV(t,s)f-V(t,s)\left((\partial_t^2V(t,s)f)_{|t=s}\right)=U(t,s)f,\quad f\in{\CI}\end{equation}
 and \eqref{eq=thm2A} follows easily  from \eqref{nnn}. If $a(t,x)$ is time-dependent,  statement \eqref{nn} is not true and it will be more difficult to prove that \eqref{nnn} implies \eqref{eq=thm2A}.

To prove \eqref{eq=thm2A}, we start by showing the link between $F'(\chi_3V(t,s)\psi_3)(t,\theta)$ and $R_{\chi_4,\psi_4}(\theta)$.
\begin{lem}\label{l1} Assume $\rm(H1)$ and $\rm(H2)$ are fulfilled and let $n\geq4$ be even. Let  $t\geq(k_0+1)T$, $0\leq s\leq\frac{2T}{3}$. Then, the family of operators $V(t,s,\theta)=F'(\chi_3V(t,s)\psi_3)(t,\theta)$ admits an analytic continuation to $\{ \theta\in\mathbb C'\ :\ \textrm{Im}(\theta)\geq0\}$ and we have \begin{equation}\label{eq=thm2E}\displaystyle\limsup_{\substack{\lambda\to0 \\ \im(\lambda)>0}}\left(\sup_{s\in[0,\frac{2T}{3}]}\Vert V(t,s,\lambda)\Vert_{\mathcal L(L^2({\R}^n),\dot{H}^1({\R}^n))}\right)<\infty.\end{equation} \end{lem}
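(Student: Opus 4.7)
The plan is to identify $V(t,s,\theta)$, initially defined by an absolutely convergent series for large $\textrm{Im}(\theta)$, with an explicit expression in which the cut-off resolvent $R_{\chi_4,\psi_4}(\theta)$ appears. The hypothesis $\rm(H2)$ then supplies both the analytic continuation and the limsup bound.

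Fix $t\geq(k_0+1)T$ and let $\tau\in[0,T)$ be the representative of $t$ modulo $T$. For $\textrm{Im}(\theta)>A_1T$ the series defining $F'$ converges absolutely by Proposition~\ref{p2}, and \eqref{eq=prop1A} together with the multiplicative property of $\mathcal U$ gives
\[V(kT+t,s)=P_1\mathcal U(\tau,0)\mathcal U(T)^N\mathcal U(0,s)P^2\qquad\text{whenever $kT+t>s$,}\]
for an integer $N=N(k)\geq 0$ depending linearly on $k$. Substituting into the Bloch--Gelfand sum and collapsing the geometric series via
\[\sum_{N\geq 0}\mathcal U(T)^Ne^{iN\theta}=-e^{-i\theta}(\mathcal U(T)-e^{-i\theta})^{-1}\]
yields the identity
\[V(t,s,\theta)=-e^{i\theta(\tau/T-1)}\chi_3P_1\mathcal U(\tau,0)\bigl(\mathcal U(T)-e^{-i\theta}\bigr)^{-1}\mathcal U(0,s)P^2\psi_3+E(t,s,\theta),\]
where $E(t,s,\theta)$ is an entire correction accounting for the at most one summand that the convention \eqref{eq=VA} forces to vanish; it appears only when $\tau<s$ and is an explicit product of $\theta$-independent bounded operators with the entire function $e^{i\theta\tau/T}$.

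I next localize the resolvent by inserting the cut-offs $\chi_4,\psi_4$. From \eqref{dudu} one has $\chi_4=1$ on $\textrm{supp}(\chi_3)+T$ and $\psi_4=1$ on $\textrm{supp}(\psi_3)+T$, so by the finite speed of propagation (applied to $\mathcal U(\tau,0)$ with $\tau\leq T$ and to $\mathcal U(0,s)$ with $s\leq 2T/3<T$)
\[\chi_3P_1\mathcal U(\tau,0)=\chi_3P_1\mathcal U(\tau,0)\chi_4,\qquad \mathcal U(0,s)P^2\psi_3=\psi_4\mathcal U(0,s)P^2\psi_3,\]
where $\chi_4,\psi_4$ act componentwise on $\dot{\mathcal H}_1$. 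This turns the resolvent in the main term into $R_{\chi_4,\psi_4}(\theta)$:
\[V(t,s,\theta)=-e^{i\theta(\tau/T-1)}\bigl[\chi_3P_1\mathcal U(\tau,0)\bigr]R_{\chi_4,\psi_4}(\theta)\bigl[\mathcal U(0,s)P^2\psi_3\bigr]+E(t,s,\theta).\]

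Since $\chi_4,\psi_4=1$ on $\{|x|\leq\rho+1+3T\}$, $\rm(H2)$ supplies an analytic continuation of $R_{\chi_4,\psi_4}(\theta)$ to $\{\textrm{Im}(\theta)>0\}$, a continuous extension to $\{\textrm{Im}(\theta)\geq0\}\cap\mathbb C'$, and the bound $\limsup_{\lambda\to0,\,\textrm{Im}(\lambda)>0}\|R_{\chi_4,\psi_4}(\lambda)\|<\infty$. The remaining factors are benign: $\chi_3P_1\mathcal U(\tau,0)$ is $\theta$-independent and bounded (multiplication by $\chi_3\in\mathcal C^\infty_0$ is continuous on $\dot H^1$ for $n\geq 3$), $\|\mathcal U(0,s)P^2\psi_3\|$ is bounded uniformly for $s\in[0,2T/3]$ by Proposition~\ref{p2}, the prefactor $e^{i\theta(\tau/T-1)}$ is entire and bounded near $\theta=0$, and $E$ is entire and uniformly bounded in $s$. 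Continuing the displayed identity from $\{\textrm{Im}(\theta)>A_1T\}$ to the whole domain then delivers both the stated analytic continuation of $V(t,s,\theta)$ and the uniform limsup estimate over $s\in[0,2T/3]$.

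\textbf{Main obstacle.} The crucial step is to realize $R_{\chi_4,\psi_4}(\theta)$ inside the Bloch--Gelfand transform. This requires simultaneously exposing the geometric series in $\mathcal U(T)$ and absorbing the cut-offs via finite speed of propagation, and it works only because of the carefully graded supports of \eqref{dudu} and the restriction $s\leq 2T/3<T$, which keeps the backward propagation time strictly below $T$.
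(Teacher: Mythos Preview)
Your proof is correct and follows essentially the same route as the paper: both express the Bloch--Gelfand transform as a geometric series in $\mathcal U(T)$, sum it to obtain the resolvent, insert the cut-offs $\chi_4,\psi_4$ via finite propagation speed (using the graded supports \eqref{dudu} and $s\leq 2T/3<T$), and then read off the continuation and the limsup bound directly from $\rm(H2)$. The only organizational difference is that the paper factors the propagator as $\mathcal U(t',0)\,\mathcal U(mT)\,\mathcal U(0,s-T)$ and separates the single term $e^{it'\theta/T}\chi_3\mathcal U(t',s)\psi_3$ when $t'\geq s$, whereas you factor as $\mathcal U(\tau,0)\,\mathcal U(T)^N\,\mathcal U(0,s)$ and carry an entire correction $E$ when $\tau<s$; the two displays are algebraically equivalent via $\mathcal U(0,s-T)=\mathcal U(T)\mathcal U(0,s)$.

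One point worth flagging: at the end of its proof the paper also invokes the property $(S')$ established in subsection~3.3 (Theorem~\ref{t11}) to know \emph{a priori} that $V(t,s,\theta)$ is meromorphic on all of $\mathbb C'$, and then uses the explicit formula together with $\rm(H2)$ to rule out poles in the closed upper half-plane, thereby obtaining genuine analyticity at real $\theta\in\mathbb C'$. Your argument, relying on $\rm(H2)$ alone, yields analyticity on $\{\textrm{Im}(\theta)>0\}$ and a continuous extension to $\{\textrm{Im}(\theta)\geq 0\}\cap\mathbb C'$; this is enough for the limsup statement \eqref{eq=thm2E}, but nominally weaker on the boundary than what the paper asserts.
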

\begin{proof} Notice that, from (2.2), for $\im(\theta)>AT$ and for all $\phi_1,\phi_2\in{\CI}$, we have
\begin{equation}\label{lalali}R_{\phi_1,\phi_2}(\theta)=-e^{i\theta}\sum_{k=0}^\infty\phi_1\mathcal U(kT)\phi_2e^{ik\theta}.\end{equation}
Set $k_2\in\mathbb N$ such that $0\leq t'=t-k_2T<T$. Assume $t'\geq s$. Then, for $\im(\theta)>AT$, we find
\begin{equation}\label{eq=thm2V}\begin{array}{lll}F'(\chi_3V(t,s)\psi_3)(t,\theta)&=&F'(P_1\chi_3\mathcal U(t,s)\psi_3P^2)(t,\theta)\\
\ &=&\displaystyle e^{i\frac{t}{T}\theta}\left(\sum_{k=-k_2}^\infty P_1\chi_3\mathcal U(t+kT,s)\psi_3P^2e^{ik\theta}\right)\\
\ \\

\ &=&\displaystyle P_1\left(e^{i\frac{t}{T}\theta}\sum_{k=-k_2}^\infty \chi_3\mathcal U(t+kT,s)\psi_3e^{ik\theta}\right)P^2
.\end{array}\end{equation}
Moreover, we obtain 
\[e^{i\frac{t}{T}\theta}\sum_{k=-k_2}^\infty \chi_3\mathcal U(t+kT,s)\psi_3e^{ik\theta}=e^{i\left(\frac{t}{T}-k_2\right)\theta}\chi_3\mathcal U(t',s)\psi_3+e^{i\frac{t}{T}\theta}\sum_{k=-(k_2-1)}^\infty \chi_3\mathcal U(t+kT,s)\psi_3e^{ik\theta}\]
and since $\frac{t}{T}=k_2+\frac{t'}{T}$, we get
\[e^{i\frac{t}{T}\theta}\sum_{k=-k_2}^\infty \chi_3\mathcal U(t+kT,s)\psi_3e^{ik\theta}=e^{i\frac{t'}{T}\theta}\chi_3\mathcal U(t',s)\psi_3+e^{i\frac{t'}{T}\theta}\left(e^{ik_2\theta}\sum_{k=-(k_2-1)}^\infty \chi_3\mathcal U(t+kT,s)\psi_3e^{ik\theta}\right).\]
Applying (2.1), for $\im(\theta)>AT$, we find
\[e^{ik_2\theta}\sum_{k=-(k_2-1)}^\infty \chi_3\mathcal U(t+kT,s)\psi_3e^{ik\theta}=e^{ik_2\theta}\sum_{k=-(k_2-1)}^\infty \chi_3\mathcal U(t',0)\mathcal U((k_2-1)T+kT)\mathcal U(0,s-T)\psi_3e^{ik\theta}\]
and the finite speed of propagation implies
\[\begin{array}{l}\displaystyle e^{ik_2\theta}\sum_{k=-(k_2-1)}^\infty \chi_3\mathcal U(t+kT,s)\psi_3e^{ik\theta}\\
\ \\
\displaystyle=e^{ik_2\theta}\sum_{k=-(k_2-1)}^\infty \chi_3\mathcal U(t',0)\chi_4\mathcal U((k_2-1)T+kT)\psi_4\mathcal U(0,s-T)\psi_3e^{ik\theta}.\end{array}\]
Applying \eqref{lalali} to the right hand side term of the last formula, we obtain
\[e^{ik_2\theta}\sum_{k=-(k_2-1)}^\infty \chi_3\mathcal U(t+kT,s)\psi_3e^{ik\theta}=-\chi_3\mathcal U(t',0)R_{\chi_4,\psi_4}(\theta)\mathcal U(0,s-T)\psi_3.\]
It follows
\begin{equation}\label{tutu}F'(\chi_3V(t,s)\psi_3)(t,\theta)=P_1\left(e^{i\frac{t'}{T}\theta}\left[\chi_3\mathcal U(t',s)\psi_3-\chi_3\mathcal U(t',0)R_{\chi_4,\psi_4}(\theta)\mathcal U(0,s-T)\psi_3\right]\right)P^2.\end{equation}
Following the same argument, for $t'< s$ and $\im(\theta)>AT$, we get
\begin{equation}\label{tutu1}F'(\chi_3V(t,s)\psi_3)(t,\theta)=-P_1\left(e^{i\frac{t'}{T}\theta}\chi_3\mathcal U(t',0)R_{\chi_4,\psi_4}(\theta)\mathcal U(0,s-T)\psi_3\right)P^2.\end{equation}
Recall that $T_2(b)=k_0T$.
We have established in subsection 3.3 that, for $t\geq(k_0+1)T=T_2(b)+T$ and $0\leq s\leq\frac{2T}{3}<T$,  $V(t,s,\theta)$ admits a meromorphic continuation to
 $\{ \theta\in\mathbb C'\ :\ \textrm{Im}(\theta)\geq0\}$.
Moreover, from  \eqref{tutu} and \eqref{tutu1}, for $t\geq(k_0+1)T$ and $0\leq s\leq\frac{2T}{3}$, assumption $\rm(H2)$ implies that the family of operators  $V(t,s,\theta)$ has no poles on $\{ \theta\in\mathbb C'\ :\ \textrm{Im}(\theta)\geq0\}$ and satisfies \eqref{eq=thm2E}.
Thus,  the family of operators  $V(t,s,\theta)$ is analytic with respect to  $\theta$  on $\theta\in\{ \theta\in\mathbb C'\ :\ \textrm{Im}(\theta)\geq0\}$ and satisfies \eqref{eq=thm2E}.\end{proof}

Now, by integrating on a suitable contour of ${\C}'$, we obtain the following estimates.
\begin{lem}\label{l2}Assume $\rm(H1)$ and $\rm(H2)$  fulfilled and let $n\geq4$ be even. Then, for all $0\leq s\leq\frac{2T}{3}$ and for all $ d\in\mathbb{N}$   such that $d\geq k_0+1$, we have
\begin{equation}\label{eq=thm2P}\Vert \chi_3V(dT,s)\psi_3\Vert_{\mathcal L(L^2({\R}^n),\dot{H}^1({\R}^n))}\leq\frac{C_5}{(dT+1)\ln^2(dT+e)} .\end{equation}\end{lem}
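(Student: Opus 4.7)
The plan is to apply the Fourier-Bloch-Gelfand inversion formula from Proposition \ref{p8}(4) and then deform the contour of integration into $\mathbb{C}'$, exploiting the analytic extension of $V(dT, s, \theta)$ supplied by Lemma \ref{l1}. Since $\chi_3 V(\cdot, s)\psi_3 \in H^{1,A_1}_{b,s}({\R}^{1+n})$ by \eqref{1} and $F(\phi)(t, \theta) = e^{-i\theta t/T} F'(\phi)(t, \theta)$, the inversion at $t = dT$ takes the form
\[
\chi_3 V(dT, s)\psi_3 = \frac{1}{2\pi}\int_{d_{\alpha, -\pi}} e^{-i\theta d}\, V(dT, s, \theta)\, d\theta
\]
for any $\alpha > A_1 T$, where I have used the $2\pi$-periodicity in $\theta$ (Proposition \ref{p8}(2)) to center the contour on the origin. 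Since $d\in\mathbb{N}$, the factor $e^{-i\theta d}$ is itself $2\pi$-periodic, so the full integrand is $2\pi$-periodic in $\theta$.

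By Lemma \ref{l1}, $V(dT, s, \theta)$ extends analytically to $\{\theta \in \mathbb{C}' : \textrm{Im}(\theta) \geq 0\}$ and is uniformly bounded near $\theta = 0$, uniformly in $s \in [0, 2T/3]$. I apply Cauchy's theorem to deform $d_{\alpha, -\pi}$ down to
\[
\Gamma_\epsilon = [-\pi, -\epsilon] \cup \gamma_\epsilon \cup [\epsilon, \pi],
\]
where $\gamma_\epsilon$ is the upper semicircle of radius $\epsilon$ about $0$, circumventing the only singularity of the integrand inside the strip. The connecting vertical segments at $\pm \pi$ cancel by $2\pi$-periodicity. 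Choosing $\epsilon = 1/d$: on $\gamma_\epsilon$ one has $|e^{-i\theta d}| \leq e^{d\epsilon} = e$ and the arc has length $\pi/d$; on the real segments I split $V(dT, s, \theta) = V(dT, s, 0) + W(dT, s, \theta)$, noting that the constant $V(dT, s, 0)$ integrates to zero over the closed period since $d \geq 1$. It therefore suffices to estimate the $W$-contribution.

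The main obstacle is improving the naive $O(1/d)$ bound to $O(1/(d \log^2 d))$. This requires exploiting the polynomial-in-$\log\theta$ structure from property $(S')$ of Theorem \ref{t11}: because $V(dT, s, \theta)$ is bounded near $0$ (Lemma \ref{l1}), the $(S')$ expansion forces $m=0$ and $P_{0,dT,s}$ to reduce to a constant, and the leading non-constant term takes the form
\[
W(dT, s, \theta) = \frac{\theta}{R_{dT,s}(\log \theta)}\, P_{1, dT, s}(\log \theta) + (\text{higher-order terms in } \theta) + \tilde{C}(dT, s, \theta),
\]
with $\tilde{C}(dT, s, 0) = 0$, the degrees of $R_{dT,s}$ and $P_{1,dT,s}$ being such that $W(dT, s, \theta) = O\big(|\theta|/|\log \theta|^{2}\big)$ near $0$. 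Integration by parts on the real segments, with boundary terms at $\pm \pi$ cancelling by $2\pi$-periodicity, followed by the scaling substitution $\theta = u/d$ (which converts the $|\log \theta|^{-2}$ factor at $\theta \sim 1/d$ into $(\log d)^{-2}$), yields the bound $O(1/(d \log^2 d))$. Uniformity in $s \in [0, 2T/3]$ propagates from the uniform $\limsup$ bound in Lemma \ref{l1}, and rewriting $d = t/T$ gives the claimed inequality.
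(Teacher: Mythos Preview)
There is a genuine gap in your argument, centered on the claimed structure of $W(\theta) = V(dT, s, \theta) - V(dT, s, 0)$ near $\theta = 0$. You assert that boundedness forces $m = 0$ in the $(S')$ expansion \eqref{eq=VG}, and hence that the leading non-constant term is $\theta\,P_1(\log\theta)/R(\log\theta)$, yielding $W(\theta) = O\big(|\theta|/|\log\theta|^{2}\big)$. This is not correct. Boundedness near $0$ only kills the terms with $j < m$ and forces $\deg P_m \leq m\,\deg R$; the surviving $j = m$ term $P_m(\log\theta)/R(\log\theta)^{m}$ is then a bounded rational function of $\log\theta$ whose non-constant part generically behaves like $(\log\theta)^{-1}$. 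In the paper's reformulation \eqref{eq=thm2G} the leading singular term is $B(s)\,\theta^{m_0}(\log\theta)^{-\mu}$ with $m_0 \geq 0$ and $\mu \geq 1$, and the case $m_0 = 0$ is not excluded. If $W(\theta) \sim (\log\theta)^{-1}$, one integration by parts on your real contour produces an integrand of size $(d\,\theta\,|\log\theta|^{2})^{-1}$, whose $L^{1}$ norm over $[\epsilon,\pi]$ is only $O(1/d)$; your scaling heuristic does not control the contribution from $\theta$ bounded away from $0$, where $|\log\theta|$ carries no gain. Nor does boundedness impose any relation between $\deg P_1$ and $\deg R$, since $\theta\cdot(\text{polynomial in }\log\theta)\to 0$ regardless.

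The paper sidesteps this by deforming the contour \emph{below} the real axis to $\textrm{Im}(\theta) = -\delta T$, with a detour $\omega$ around the branch point at $0$. On the straight pieces $\Gamma_1,\Gamma_2$ one gets exponential decay $e^{-\delta dT}$ for free, and the holomorphic part $A(s,\theta)$ of \eqref{eq=thm2G} likewise contributes exponentially little, by Cauchy's theorem on the closed loop $\omega\cup[-\nu-i\delta T,\,\nu-i\delta T]$. Only the singular piece $B(s)\,\theta^{m_0}(\log\theta)^{-\mu}$ over $\omega$ survives, and Vainberg's oscillatory-integral lemma (\cite{V1}, Chapter~IX, Lemma~7) gives $O\big((dT)^{-m_0-1}(\log dT)^{-\mu-1}\big)$; the extra logarithm comes from the jump of $(\log\theta)^{-\mu}$ across the cut $i\mathbb{R}^{-}$. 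Your upper-half-plane contour never sees this jump, which is why the $\ln^{-2}$ factor does not emerge from your sketch.
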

\begin{proof}
In subsection 3.3, we have shown that $V(dT,s,\theta)=F'(\chi_3V(t,s)\psi_3)(dT,\theta)$ satisfies property (S'). Thus, $V(dT,s,\theta)$ admits a meromorphic continuation with respect to $\theta$ on $\mathbb C'$. Assumption  (H2) and Lemma 1 imply that $V(dT,s,\theta)$ has no poles on $\{ \theta\in\mathbb C'\ :\ \textrm{Im}(\theta)\geq0\}$. Moreover, $V(dT,s,\theta)$ is bounded independently of the choice of $s$ and satisfies \eqref{eq=thm2E}.
Also, there exists $\epsilon_0>0$ such that for $\theta\in\mathbb C'$ with $\vert\theta\vert\leq\epsilon_0$ we have
\begin{equation}\label{eq=thm2F}V(dT,s,\theta)=V((k_0+1)T,s,\theta)=\sum_{k\geq-m}\sum_{j\geq-m_k}R_{kj}\theta^k(\log\theta)^{-j}.\end{equation}
The property \eqref{eq=thm2E} implies that for the representation (\ref{eq=thm2F}) we have $R_{kj}=0$ for $k<0$ or $k=0$ and $j<0$. It follows that, for $\theta\in\mathbb C'$ with $\vert\theta\vert\leq\epsilon_0$, we obtain the following representation
\begin{equation}\label{eq=thm2G}V(dT,s,\theta)=V((k_0+1)T,s,\theta)=A(s,\theta)+B(s)\theta^{m_0}\log(\theta)^{-\mu}+ \underset{\theta\rightarrow0}o\left(\theta^{m_0}\log(\theta)^{-\mu}\right)\end{equation}
with $A(s,\theta)$ an holomorphic function with respect to $\theta$ for $\vert\theta\vert\leq\epsilon_0$ , $B(s)$ a finite-dimensional operator , $m_0\geq0$  and $\mu\geq 1$. Moreover, \eqref{tutu} and \eqref{tutu1} imply that $A(s,\theta)$ and $B(s)$ are bounded independently of $s$.

Since $V(dT,s,\theta)$ has no poles on $\{ \theta\in\mathbb C'\ :\ \textrm{Im}(\theta)\geq0\}$, there exists $\displaystyle0<\delta\leq\frac{\epsilon_0}{T}$ and $0<\nu<\epsilon_0$  sufficiently small such that  $V(dT,s,\theta)$ has no poles on  \[\{ \theta\in\mathbb C'\ :\ \textrm{Im}(\theta)\geq-\delta T ,\ -\pi\leq  \textrm{Re}(\theta)\leq-\nu,\ \nu\leq  \textrm{Re}(\theta)\leq\pi\}.\] Consider the contour $\gamma=\Gamma_1\cup\omega\cup\Gamma_2$ where $\Gamma_1=[-i\delta T-\pi,-i\delta T-\nu]$, $\Gamma_2=[-i\delta+\nu,-i\delta+\pi]$. The contour  $\omega$  of $\mathbb C$, is a curve connecting $-i\delta T-\nu$ and $-i\delta T+\nu$ symmetric with respect to the axis $ \textrm{Re}(\theta)=0$. The part of $\omega$ lying in $\{\theta\ :\  \textrm{Im}(\theta)\geq0\}$ is a half-circle
with radius $\nu$, $\omega\cap\{\theta\ :\   \textrm{Re}(\theta)<0,\  \textrm{Im}(\theta)\leq0\}=[-\nu-i\delta T,-\nu]$ and $\omega\cap\{\theta\ :\   \textrm{Re}(\theta)>0,\  \textrm{Im}(\theta)\leq0\}=[\nu,\nu-i\delta T]$. Thus, $\omega$ is included in the
region where we have no poles of $V(dT,s,\theta)$. Consider the closed contour 
\[\mathcal C=[i(A+1)T+\pi,i(A+1)T-\pi]\cup[i(A+1)T-\pi,-i\delta T-\pi]\cup\gamma\cup[-i\delta T+\pi,i(A+1)T+\pi].\]

The statement 2) of Proposition \ref{p8} implies
\begin{equation}\label{eq=thm2H}V(dT,s,\theta+2\pi)=V(dT,s,\theta).\end{equation}
Since the contour $\mathcal C$ is included in the
region where $V(dT,s,\theta)$ has no poles, the Cauchy formula implies
\[\int_{\mathcal C}e^{-id\theta}V(dT,s,\theta)\d\theta=0.\]
Moreover,  (\ref{eq=thm2H}) implies
\[\int_{[i(A+1)T-\pi,-i\delta T-\pi]}\hspace{-2cm}e^{-id\theta}V(dT,s,\theta)\d\theta=-\int_{[-i\delta T+\pi,i(A+1)T+\pi]}\hspace{-2cm}e^{-id\theta}V(dT,s,\theta)\d\theta\]
and we obtain
\begin{equation}\label{eq=thm2K}\displaystyle\int_{[i(A+1)T-\pi,i(A+1) T+\pi]}\hspace{-2cm}F(V(t,s))(dT,\theta)\d \theta=\int_\gamma F(V(t,s))(dT,\theta)\d \theta.\end{equation}
The formula (\ref{eq=VF}) and the identity (\ref{eq=thm2K}) imply
\begin{equation}\label{eq=thm2L}\chi_3 V(dT,s)\psi_3=\frac{1}{2\pi}\int_\gamma F(V(t,s))(dT,\theta)\psi_3\d\theta=\frac{1}{2\pi}\int_\gamma e^{-id\theta}V((k_0+1)T,s,\theta)\d\theta.\end{equation}
We will now estimate the right-hand side term of (\ref{eq=thm2L}).  Consider $A(s,\theta)$ the holomorphic part of the expansion (\ref{eq=thm2G}).
 Choose $\delta$ such that $\displaystyle\delta<\frac{\epsilon_0}{T}$. Then,  the closed contour $\omega\cup[-i\delta T-\nu,-i\delta T+\nu]$ is contained in the domain $\{\theta\in\mathbb C\ :\  \vert\theta\vert<\epsilon_0\}$. Since $A(s,\theta)$ is holomorphic with respect to $\theta$, for $\vert\theta\vert\leq\epsilon_0$, by applying the  Cauchy formula, we obtain
\[\int_{\omega} e^{-id\theta}A(s,\theta)\d\theta=-\int_{[-i\delta T-\nu,-i\delta T+\nu]}e^{-id\theta}A(s,\theta)\d\theta\]
and, since $A(s,\theta)$ is bounded independently of $s$, it follows
\begin{equation}\label{eq=thm2M}\left\vert\int_{\omega} e^{-id\theta}A(s,\theta)\d\theta\right\vert\leq  C_1e^{-\delta (dT)}\end{equation}
with $C_1>0$ independent of $s$ and $d$. For $\theta\in\Gamma_1\cup\Gamma_2$,  $V(dT,s,\theta)=V(k_0T,s,\theta)$ is bounded independently of $s$, and we show easily that
\begin{equation}\label{eq=thm2N}\left\vert\int_{\Gamma_j} e^{-ik\theta}V(dT,s,\theta)\d\theta\right\vert\leq C_2e^{-\delta (dT)},\quad j=1,2\end{equation}
with $C_2$ independent of $s$ and $d$.
Applying the estimates (\ref{eq=thm2M}), (\ref{eq=thm2N}) and the representation (\ref{eq=thm2G}), we get 
\begin{equation}\begin{array}{lll}\label{eq=thm2O}\int_\gamma e^{-id\theta}V((k_0+1)T,s,\theta)\d\theta&=& \displaystyle \underset{d\rightarrow+\infty}o\left(\frac{1}{(dT+1)\ln^2(dT+e)}\right)\\
\ \\
\ &\ &\displaystyle+ \int_\omega e^{-id\theta}\left(B(s)\theta^{m_0}(\log\theta)^{-\mu}+  \underset{\theta\rightarrow0}o\left(\theta^{m_0}(\log\theta)^{-\mu}\right)\right)\d\theta.\end{array}\end{equation}Following  Lemma 7 in Chapter IX of \cite{V1},  for $t=d$ and $\nu=\frac{1}{d}$,  we obtain
\[\int_\omega e^{-id\theta}\theta^{m_0}(\log\theta)^{-\mu} \d\theta \leq \frac{C_3}{(dT+1)^{m_0+1}\ln^{\mu+1}(dT+e)}.\]
Combining this estimate with the representation (\ref{eq=thm2O}), for all
 $d\geq k_0+1$ and  $s\in]0,T]$, we get
\[\left\Vert\int_\gamma e^{-id\theta}V((k_0+1)T,s,\theta)\d\theta\right\Vert_{\mathcal L(L^2,\dot{H}^1({\R}^n))}\leq\frac{C_4}{(dT+1)\ln^2(dT+e)}\]
with $C_4>0$ independent of $s$ and $d$. The inversion formula (\ref{eq=thm2L}) implies that, for all $d\geq k_0+1$ and  $0\leq s\leq\frac{2T}{3}$, we have \eqref{eq=thm2P}.
 
\end{proof}

\begin{lem}
Assume $\rm(H1)$ and $\rm(H2)$ fulfilled and let $n\geq4$ be even. Then, for all $0\leq s\leq\frac{2T}{3}$ and for  all $d\in\mathbb{N}$ such that $d\geq k_0+1$, we have

\begin{equation}\label{eq=thm2W}\left\|\chi_3 \partial_t V(dT,s) \psi_3\right\|_{{\mathcal L}(L^2(\R^n), L^2(\R^n))} \leq \frac{C_5}{(dT + 1)\ln^2(dT + e)}.\end{equation}
\end{lem}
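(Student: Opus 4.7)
The plan is to repeat the argument of \lemref{l2} with $V$ replaced by $V'(t,s) := \partial_t V(t,s) = P_2\mathcal U(t,s)P^2$, which maps $L^2_b \to L^2(\R^n)$ continuously by the energy estimate. The whole scheme is driven by a Fourier-Bloch-Gelfand transform, a contour integral, and an asymptotic expansion near $\theta=0$; we need each of these three ingredients for the operator $\chi_3 V'(t,s)\psi_3$.

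First I would establish the analog of \lemref{l1}: the family
\[
V'(t,s,\theta) := F'(\chi_3\partial_tV(t,s)\psi_3)(t,\theta),
\]
initially defined for $\im(\theta)>A_1T$, extends analytically to $\{\theta\in\mathbb C'\ :\ \im(\theta)\geq 0\}$ and satisfies
\[
\limsup_{\substack{\lambda\to 0\\ \im(\lambda)>0}}\left(\sup_{s\in[0,\frac{2T}{3}]}\|V'(t,s,\lambda)\|_{\mathcal L(L^2,L^2)}\right)<\infty.
\]
The proof is line-for-line the argument that produced \eqref{tutu} and \eqref{tutu1}, but with $P_1$ replaced by $P_2$ on the outside. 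Writing $t=k_2T+t'$ with $0\leq t'<T$, for $t'\geq s$ and $\im(\theta)>AT$ one gets
\[
V'(t,s,\theta)=P_2\!\left(e^{i\frac{t'}{T}\theta}\bigl[\chi_3\mathcal U(t',s)\psi_3-\chi_3\mathcal U(t',0)R_{\chi_4,\psi_4}(\theta)\mathcal U(0,s-T)\psi_3\bigr]\right)\!P^2,
\]
and an analogous formula without the first term when $t'<s$. Assumption $\rm(H2)$ on $R_{\chi_4,\psi_4}$ then yields the analytic continuation and the uniform-in-$s$ bound near $\theta=0$.

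Next I would check that $V'(t,s,\theta)$ has property $(S')$. This is immediate from \thmref{t11}: $V(t,s,\theta)=F'(\chi_3V(t,s)\psi_3)(t,\theta)$ satisfies $(S')$, the remark following Definition 4 gives $(S')$ for $\partial_tV(t,s,\theta)$, and the identity
\[
F'(\chi_3\partial_tV(t,s)\psi_3)(t,\theta)=\partial_t F'(\chi_3V(t,s)\psi_3)(t,\theta)-\tfrac{i\theta}{T}F'(\chi_3V(t,s)\psi_3)(t,\theta)
\]
combined with the fact that multiplication by $\theta$ preserves $(S')$ gives $(S')$ for $V'(t,s,\theta)$. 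Consequently, for $d\geq k_0+1$, one obtains near $\theta=0$ an expansion
\[
V'(dT,s,\theta)=A'(s,\theta)+B'(s)\theta^{m'_0}(\log\theta)^{-\mu'}+\underset{\theta\to 0}{o}\bigl(\theta^{m'_0}(\log\theta)^{-\mu'}\bigr),
\]
with $A'(s,\theta)$ holomorphic, $B'(s)$ finite-dimensional, $m'_0\geq 0$, $\mu'\geq 1$, and $A',B'$ bounded uniformly in $s$. The uniform bound from Step 1 is exactly what forces the ``negative'' terms $R_{kj}$ with $k<0$ or $k=0,\ j<0$ to vanish, just as in the proof of \lemref{l2}.

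Finally I would apply verbatim the contour-integral argument of \lemref{l2}. With the same contour $\gamma=\Gamma_1\cup\omega\cup\Gamma_2$ in $\mathbb C'$ on which $V'$ is holomorphic, the inversion formula \eqref{eq=VF} and the $2\pi$-periodicity in $\theta$ (Proposition \ref{p8}) give
\[
\chi_3\partial_tV(dT,s)\psi_3=\frac{1}{2\pi}\int_\gamma e^{-id\theta}V'((k_0+1)T,s,\theta)\,\d\theta.
\]
The contribution from the holomorphic part $A'(s,\theta)$ on $\omega$ is estimated by deforming to $[-i\delta T-\nu,-i\delta T+\nu]$ and is $O(e^{-\delta dT})$; the pieces $\Gamma_1,\Gamma_2$ contribute $O(e^{-\delta dT})$ using uniform boundedness of $V'$ there; and the singular piece $B'(s)\theta^{m'_0}(\log\theta)^{-\mu'}$ is bounded via Lemma 7, Chapter IX, of \cite{V1} by $C(dT+1)^{-m'_0-1}\ln^{-\mu'-1}(dT+e)\leq C(dT+1)^{-1}\ln^{-2}(dT+e)$. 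Summing gives \eqref{eq=thm2W} with a constant independent of $s$ and $d$.

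The main obstacle is Step 1: verifying that $V'(t,s,\theta)$ stays bounded as $\theta\to 0$ uniformly in $s\in[0,\tfrac{2T}{3}]$. Everything else is a mechanical transcription of \lemref{l2}; it is the explicit representation through $R_{\chi_4,\psi_4}(\theta)$, together with the boundedness assumption in $\rm(H2)$, that kills the potentially singular terms in the local expansion and allows the contour to be pushed down to $\im(\theta)=-\delta T$.
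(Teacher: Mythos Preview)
Your proposal is correct and follows essentially the same approach as the paper: the paper also uses the identity $F'(\chi_3\partial_tV(t,s)\psi_3)=\partial_tF'(\chi_3V(t,s)\psi_3)-\tfrac{i\theta}{T}F'(\chi_3V(t,s)\psi_3)$ together with the remark after Definition~4 to get property $(S')$, then replaces $P_1$ by $P_2$ in the Lemma~\ref{l1} computation to obtain the explicit representation through $R_{\chi_4,\psi_4}(\theta)$, and finally refers back to the contour argument of Lemma~\ref{l2}. The only difference is the order in which you present the $(S')$ step and the explicit-formula step, which is purely cosmetic.
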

\begin{proof}

For $\im(\theta)>AT$, we have 
\[\partial_tF'(\chi_3V(t,s)\psi_3)(t,\theta)=\frac{i\theta}{T}F'(\chi_3V(t,s)\psi_3)(t,\theta)+F'(\chi_3\partial_tV(t,s)\psi_3)(t,\theta)\]
and it follows that 
\[F'(\chi_3\partial_tV(t,s)\psi_3)(t,\theta)=\partial_tF'(\chi_3V(t,s)\psi_3)(t,\theta)-\frac{i\theta}{T}F'(\chi_3V(t,s)\psi_3)(t,\theta).\]
Since, for $t\geq (k_0+1)T$ and $0\leq s\leq \frac{2T}{3}$,  the family of operators $F'(\chi_3V(t,s)\psi_3)(t,\theta)$ satisfies property (S'), $\partial_tF'(\chi_3V(t,s)\psi_3)(t,\theta)$ satisfies also (S'). Thus, the family of operators $F'(\chi_3\partial_tV(t,s)\psi_3)(t,\theta)$ admits a meromorphic continuation satisfying property (S'). Moreover, following the definition of $\mathcal U(t,s)$, we have
\[\partial_tV(t,s)=P_2\mathcal U(t,s)P^2\]
and we get
\[\chi_3 \partial_t V(t,s) \psi_3=P_2\chi_3 \mathcal U(t,s) \psi_3P^2.\]
Following the same arguments as those used in the proof of Lemma 1 , we obtain
\[F'(\chi_3\partial_tV(t,s)\psi_3)(t,\theta)=P_2\left(e^{i\frac{t'}{T}\theta}\left[\chi_3\mathcal U(t',s)\psi_3-\chi_3\mathcal U(t',0)R_{\chi_4,\psi_4}(\theta)\mathcal U(0,s-T)\psi_3\right]\right)P^2\]
with $t=lT+t'$, $l\in\mathbb N$ and $0\leq t'<T$. Thus, assumption (H2) implies that, for for $t\geq (k_0+1)T$ and $0\leq s\leq \frac{2T}{3}$, $F'(\chi_3\partial_tV(t,s)\psi_3)(t,\theta)$ is analytic with respect to $\theta$ on $\{ \theta\in{\C}'\ :\ \im(\theta)\geq0\}$ and 
\[\displaystyle\limsup_{\substack{\lambda\to0 \\ \im(\lambda)>0}}\left(\sup_{s\in[0,\frac{2T}{3}]}\Vert F'(\chi_3\partial_tV(t,s)\psi_3)(t,\lambda)\Vert_{\mathcal L(L^2({\R}^n),\dot{H}^1({\R}^n))}\right)<\infty.\] 
Following the same arguments as those used in the proof of Lemma 2, we obtain \eqref{eq=thm2W}.\end{proof}

\ \\
\textit{Proof of Theorem \ref{t3}.} Let $\alpha\in\mathcal C^\infty({\R})$ be such that $\alpha(t)=0$ for $t\leq\frac{T}{2}$ and $\alpha(t)=1$ for $t\geq \frac{2T}{3}$. For all $h\in\dot{H}^1({\R}^n)$, $w_1=\alpha(t)U(t,0)h$ is the solution of
\begin{equation}\label{eq=thm2R} \left\{\begin{array}{c}
\partial_t^2w_1-\Div_{x}(a(t,x)\nabla_{x}w_1)=[\partial_t^2,\alpha](t)U(t,0)h,\\
(w_1,\partial_tw_1)_{\vert t=0}=(0,0).\end{array}\right.\end{equation}
We deduce from the Cauchy problem (\ref{eq=thm2R}) the following representation
\begin{equation}\label{eq=thm2S}U(t,0)=\alpha(t)U(t,0)=\int_0^tV(t,s)[\partial_t^2,\alpha](s)U(s,0)\d s,\quad t\geq T.\end{equation}
Since $[\partial_t^2,\alpha](t)=0$ for $t>\frac{2T}{3}$, the formula (\ref{eq=thm2S}) becomes
\[U(t,0)=\int_0^{\frac{2T}{3}}V(t,s)[\partial_t^2,\alpha](s)U(s,0)\d s,\quad  t\geq T.\]
The finite speed of propagation implies
\begin{equation}\label{eq=thm2T}\chi_2U(dT,0)\psi_2=\int_0^{\frac{2T}{3}}\chi_2V(dT,s)\psi_3[\partial_t^2,\alpha](s)U(s,0)\psi_2\d s,\quad  d\geq1.\end{equation}
The formula (\ref{eq=thm2T}) and the estimate (\ref{eq=thm2P}) imply that, for
 $d\geq k_0+1$, we have
\begin{equation}\label{eq=thm2U}\Vert \chi_2U(dT,0)\psi_2\Vert_{\mathcal L(\dot{H}^1({\R}^n),\dot{H}^1({\R}^n))}\leq\frac{C_6}{(dT+1)\ln^2(dT+e)},\end{equation}
with $C_6>0$ independent of $d$. 
Let $\beta\in{\CI}$. The formula (\ref{eq=thm2S}) implies that, for $t\geq (k_0+1)T$, we have
\[\partial _tU(t,0)\beta=\int_0^{\frac{2T}{3}}\partial_tV(t,s)[\partial_t^2,\alpha](s)U(s,0)\beta \d s.\]
By density, this leads to
\[\label{eq=thm2X}\chi_2\partial _tU(dT,0)\psi_2=\int_0^{\frac{2T}{3}}\chi_2\partial _tV(dT,s)\psi_3[\partial_t^2,\alpha](s)U(s,0)\psi_2\d s,\quad  d\geq k_0+1\]
and the estimate (\ref{eq=thm2W}) implies that for $d\geq k_0+1$ we get
\begin{equation}\label{eq=thm2Y}\Vert \chi_2\partial_tU(dT,0)\psi_2\Vert_{\mathcal L(\dot{H}^1({\R}^n),L^2({\R}^n))}\leq\frac{C_9}{(dT+1)\ln^2(dT+e)}.\end{equation}
The estimates (\ref{eq=thm2P}), (\ref{eq=thm2W}), (\ref{eq=thm2U}) and (\ref{eq=thm2Y}), imply that, for $d\geq k_0+1$, we have
\begin{equation}\label{eq=thm2Z}\Vert \chi_2\mathcal U(dT,0)\psi_2\Vert_{\mathcal L({\B}({\R}^n))}\leq\frac{C_9}{(dT+1)\ln^2(dT+e)}.\end{equation}
 Assume $t-s\geq (k_0+3)T$ and choose $k,l\in\mathbb{N}$ such that
\[kT\leq t\leq(k+1)T,\quad lT\leq s\leq (l+1)T.\]
Then, statement (\ref{eq=prop1A}) and the finite speed of propagation imply
\[\chi_1\mathcal U(t,s)\psi_1=\chi_1\mathcal U(t,kT)\chi_2\mathcal U((k-(l+1))T)\psi_2\mathcal U((l+1)T,s)\psi_1\]
and $(k-(l+1))T\geq (k_0+1)T$. Combining estimates (\ref{eq=prop2A}), (\ref{eq=thm2Z}), we get 
\[\Vert\chi_1\mathcal U(t,s)\psi_1\Vert_{\mathcal L({\B}}\leq\frac{C_{10}}{((k-(l+1))T+1)\ln^2((k-(l+1))T+e)}.\]
Moreover, we find
\[\begin{array}{lll}(t-s+1)\ln^2(t-s+e)&\leq&((k-(l+1))T+2T+1)\ln^2((k-(l+1))T+2T+e)\\
\ &\leq&\displaystyle(k-(l+1))T\ln^2((k-(l+1))T)\left(1+\frac{2T+1}{(k-(l+1))T}\right)\\
\ \\
\ &\ &\displaystyle\times\left(1+\frac{\ln(1+\frac{2T+e}{(k-(l+1))T})}{\ln((k-(l+1))T)}\right)^2\\
\ \\
\ &\leq& C_{11}(k-(l+1))T\ln^2((k-(l+1))T)\end{array}\]
and we show easily that
\[(k-(l+1))T\ln^2((k-(l+1))T)\leq C_{12}((k-(l+1))T+1)\ln^2((k-(l+1))T+e).\]
We deduce the estimate
\[(t-s+1)\ln^2(t-s+e)\leq C_{13}((k-(l+1))T+1)\ln^2((k-(l+1))T+e).\]
Finally, it follows that
\[\Vert\chi_1\mathcal U(t,s)\psi_1\Vert_{\mathcal L({\B}({\R}^n))}\leq\frac{C_{14}}{(t-s+1)\ln^2(t-s+e)}.\]
For $t-s\leq (k_0+3)T$, following estimate (\ref{eq=prop2A}), we have
\[\Vert\chi_1\mathcal U(t,s)\psi_1\Vert_{\mathcal L({\B}({\R}^n))}\leq C_{15}e^{A(k_0+3)T}\leq C_{15}e^{A(k_0+3)T}\left(\frac{((k_0+3)T+1)\ln^2((k_0+3)T+e)}{(t-s+1)\ln^2(t-s+e)}\right).\]
Then, we obtain (\ref{eq=thm2A}) for $n\geq4$ even.$\square$

\section{$L^2$ integrability of the local energy}
The purpose of this section is to show the $L^2$ integrability of the local energy by applying estimate (\ref{eq=thm2A}). For the free wave equation Smith and Sogge have established the following result
\begin{lem}\label{l2}\emph{(\cite{SS}, Lemma 2.2)}
Let $\gamma\leq \frac{n-1}{2}$ and let  $\phi\in\mathcal C^\infty_0(\vert x\vert<\rho+1)$. Then
\begin{equation}\label{eq=L20}\int_{\R}\Vert \phi e^{\pm it\Lambda}h\Vert_{H^\gamma({\R}^n)}^2\d t\leq C(\phi,n,\gamma)\Vert h\Vert_{\dot{H}^\gamma({\R}^n)}^2,\quad h\in\dot{H}^\gamma({\R}^n).\end{equation}\end{lem}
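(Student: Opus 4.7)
The plan is to take the Fourier transform in $t$, reducing the desired local smoothing estimate to a fixed-frequency restriction/extension bound on the sphere of radius $\tau$. Writing $u(t,x)=e^{\pm it\Lambda}h(x)$, one has $\widehat{u}(t,\xi)=e^{\pm it|\xi|}\hat h(\xi)$, and integrating $e^{-i\tau t}$ in $t$ reduces, through the delta function supported on the light cone, to the spectral slice $|\xi|=\tau$. For the $+$ sign and $\tau>0$ this yields
\[
\tilde u(\tau,x)=c_n\,\tau^{n-1}\int_{S^{n-1}}e^{i\tau x\cdot\omega}\hat h(\tau\omega)\,d\omega,
\]
vanishing for $\tau<0$ (the $-$ sign case being symmetric). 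Plancherel in $t$ then gives
\[
\int_{\R}\Vert\phi e^{it\Lambda}h\Vert_{H^\gamma({\R}^n)}^2\,dt=(2\pi)^{-1}\int_0^\infty\Vert\phi\tilde u(\tau,\cdot)\Vert_{H^\gamma({\R}^n)}^2\,d\tau.
\]

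Next I would estimate $\Vert\phi\tilde u(\tau)\Vert_{H^\gamma}$ pointwise in $\tau$. Since $\tilde u(\tau,\cdot)$ is a generalized eigenfunction, $(-\Delta)\tilde u=\tau^2\tilde u$, and $\phi$ is compactly supported, a commutator argument between $\phi$ and $(1-\Delta)^{\gamma/2}$ yields a fatter cutoff $\phi_1\in\mathcal{C}_0^\infty$ equal to $1$ on $\operatorname{supp}\phi$ with
\[
\Vert\phi\tilde u(\tau)\Vert_{H^\gamma({\R}^n)}\leq C_\phi(1+\tau)^{|\gamma|}\Vert\phi_1\tilde u(\tau)\Vert_{L^2({\R}^n)}.
\]
The remaining $L^2$ factor is handled by a sphere-extension bound: expanding the square produces a bilinear form in $\omega,\omega'\in S^{n-1}$ with kernel $\tau^{2(n-1)}\widehat{|\phi_1|^2}(\tau(\omega-\omega'))$; since $\widehat{|\phi_1|^2}$ is Schwartz, Schur's test gives
\[
\Vert\phi_1\tilde u(\tau)\Vert_{L^2({\R}^n)}^2\leq C\,\tau^{n-1}\int_{S^{n-1}}|\hat h(\tau\omega)|^2\,d\omega
\]
with a constant independent of $\tau$.

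Combining these ingredients and passing to polar coordinates,
\[
\int_{\R}\Vert\phi e^{it\Lambda}h\Vert_{H^\gamma({\R}^n)}^2\,dt\leq C\int_0^\infty(1+\tau)^{2\gamma}\tau^{n-1}\int_{S^{n-1}}|\hat h(\tau\omega)|^2\,d\omega\,d\tau.
\]
For $\tau\geq1$ the integrand is dominated by $\tau^{2\gamma+n-1}\int_{S^{n-1}}|\hat h(\tau\omega)|^2\,d\omega$, whose $\tau$-integral equals $\Vert h\Vert_{\dot{H}^\gamma({\R}^n)}^2$. For $\tau\leq 1$ and $\gamma\leq 0$ the bound is immediate, since $(1+\tau)^{2\gamma}\leq 1\leq\tau^{2\gamma}$. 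For $\tau\leq 1$ and $0<\gamma\leq(n-1)/2$, I would exploit $\gamma<n/2$: splitting off the low-frequency part of $h$ and using the compact support of $\phi$ together with the Sobolev embedding $\dot{H}^\gamma\hookrightarrow L^{2n/(n-2\gamma)}$ and H\"older's inequality, one absorbs the low-frequency contribution into $\Vert h\Vert_{\dot{H}^\gamma}^2$.

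The main obstacle is the sphere-restriction step: Schur's test must produce a $\tau$-uniform constant, which is precisely where the compact support of $\phi$ is essential and where the critical exponent $(n-1)/2$ (the Stein--Tomas endpoint on the sphere) shows up. Handling the low-frequency regime $\tau\leq 1$ for $\gamma>0$ is a secondary subtlety, since $\Vert h\Vert_{\dot{H}^\gamma}$ does not directly control $\int_{|\xi|\leq 1}|\hat h(\xi)|^2\,d\xi$.
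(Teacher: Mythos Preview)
The paper does not actually prove this lemma: it is quoted verbatim from Smith--Sogge \cite{SS}, with only the remark that the argument given there for odd $n\geq 3$ carries over unchanged to even dimensions. So there is nothing to compare against in the paper itself; your sketch is a reconstruction of the Smith--Sogge proof, and it follows the same route they take --- Plancherel in $t$ collapses $e^{\pm it\Lambda}$ to the spectral slice $|\xi|=\tau$, and the compact support of $\phi$ (through the rapid decay of $\widehat{|\phi_1|^2}$ and Schur's test) supplies the $\tau$-uniform trace bound on the sphere.

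Two small points on the write-up. First, in your commutator step the exponent should be $(1+\tau)^{\gamma}$ rather than $(1+\tau)^{|\gamma|}$: since $\widehat{\tilde u(\tau,\cdot)}$ is carried on $\{|\xi|=\tau\}$, convolution with the Schwartz function $\hat\phi$ keeps $\widehat{\phi\tilde u(\tau,\cdot)}$ concentrated near $|\eta|\approx\tau$, so the weight $(1+|\eta|^2)^{\gamma/2}$ contributes $(1+\tau)^{\gamma}$ regardless of the sign of $\gamma$. This is in fact what you use two lines later, so the $|\gamma|$ is presumably a slip. Second, your identification of the low-frequency regime $\tau\leq 1$ with $\gamma>0$ as a secondary subtlety is correct and should not be glossed over: the trace bound alone gives $\tau^{n-1}\int_{S^{n-1}}|\hat h(\tau\omega)|^2\,d\omega$, which for small $\tau$ is \emph{larger} than the desired $\tau^{2\gamma+n-1}\int_{S^{n-1}}|\hat h(\tau\omega)|^2\,d\omega$. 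Your proposed remedy via the Sobolev embedding $\dot H^\gamma\hookrightarrow L^{2n/(n-2\gamma)}$ (valid precisely because $\gamma<n/2$) together with the compact support of $\phi$ is the right idea and is how Smith--Sogge close this gap; just make sure to state it as a separate estimate on $\int_0^1\|\phi\tilde u(\tau)\|_{H^\gamma}^2\,d\tau$ rather than trying to force it pointwise in $\tau$.
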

In \cite{SS} the authors consider only odd dimensions $n\geq3$, but the proof of this lemma goes without any change for even dimensions.
We deduce from (\ref{eq=L20}) the following estimate.
\begin{lem}\label{l3}
 Let $\gamma\leq\frac{n-1}{2}$ and $\phi\in{\CI}$. Then
\begin{equation}\label{eq=L200}\int_{\R}\Vert \phi U_0(t)f\Vert^2_{\dot{\mathcal{H}}_\gamma({\R}^n)}\d t\leq C(\phi,n,\gamma)\Vert f\Vert_{\dot{\mathcal{H}}_\gamma({\R}^n)}^2,\quad f\in\dot{\mathcal{H}}_\gamma({\R}^n).\end{equation}
\end{lem}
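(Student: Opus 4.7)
The plan is to reduce \eqref{eq=L200} to four applications of \lemref{l2}. Recall that the solution of the free wave equation with data $f=(f_1,f_2)\in\dot{\mathcal H}_\gamma({\R}^n)$ is
\[
u(t)=\cos(t\Lambda)f_1+\Lambda^{-1}\sin(t\Lambda)f_2,\qquad
u_t(t)=-\Lambda\sin(t\Lambda)f_1+\cos(t\Lambda)f_2,
\]
so that $U_0(t)f=(u(t),u_t(t))$. Using the Euler identities
$\cos(t\Lambda)=\tfrac12(e^{it\Lambda}+e^{-it\Lambda})$,
$\sin(t\Lambda)=\tfrac1{2i}(e^{it\Lambda}-e^{-it\Lambda})$,
and the fact that $\Lambda$ commutes with $e^{\pm it\Lambda}$, one expresses $\phi u(t)$ and $\phi u_t(t)$ as linear combinations of the four half-wave expressions
$\phi e^{\pm it\Lambda}f_1$, $\phi e^{\pm it\Lambda}(\Lambda^{-1}f_2)$,
$\phi e^{\pm it\Lambda}(\Lambda f_1)$, $\phi e^{\pm it\Lambda}f_2$.

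Next I would control the first component of the $\dot{\mathcal H}_\gamma$-norm by applying \lemref{l2} at the exponent $\gamma$: since $f_1\in\dot H^\gamma$ and $\Lambda^{-1}f_2\in\dot H^\gamma$ (with norm equal to $\|f_2\|_{\dot H^{\gamma-1}}$), \eqref{eq=L20} gives
\[
\int_{\R}\Vert\phi u(t)\Vert_{\dot H^\gamma}^{2}\,\d t
\leq C\bigl(\Vert f_1\Vert_{\dot H^\gamma}^{2}+\Vert f_2\Vert_{\dot H^{\gamma-1}}^{2}\bigr).
\]
In the same fashion, to treat the second component I would apply \lemref{l2} at the exponent $\gamma-1$ (still $\leq\tfrac{n-1}{2}$): since $\Lambda f_1\in\dot H^{\gamma-1}$ and $f_2\in\dot H^{\gamma-1}$, \eqref{eq=L20} yields
\[
\int_{\R}\Vert\phi u_t(t)\Vert_{\dot H^{\gamma-1}}^{2}\,\d t
\leq C\bigl(\Vert f_1\Vert_{\dot H^\gamma}^{2}+\Vert f_2\Vert_{\dot H^{\gamma-1}}^{2}\bigr).
\]
Summing the two displays and using the definition
$\dot{\mathcal H}_\gamma=\dot H^\gamma\times\dot H^{\gamma-1}$ delivers \eqref{eq=L200}.

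The only subtlety is that the left-hand side of \eqref{eq=L20} involves the inhomogeneous norm $H^\gamma$ rather than $\dot H^\gamma$. For $\gamma\geq0$ this is harmless because $\Vert\cdot\Vert_{\dot H^\gamma}\leq\Vert\cdot\Vert_{H^\gamma}$. For $\gamma-1<0$, which occurs when $\gamma<1$, I would handle the passage by exploiting that $\phi e^{\pm it\Lambda}h$ is compactly supported in $x$: pairing with a test function and inserting a cutoff $\tilde\phi\in\CI$ equal to $1$ on $\textrm{supp}(\phi)$ allows one to replace the homogeneous dual norm by the inhomogeneous one, losing only a harmless constant depending on $\phi$. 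This norm-conversion step is the one that requires the most care; once it is justified, the rest of the argument is a direct rearrangement.
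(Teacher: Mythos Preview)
Your approach is correct and is exactly the route the paper intends: the paper gives no proof beyond the sentence ``We deduce from \eqref{eq=L20} the following estimate,'' and your argument is precisely the natural deduction---write $U_0(t)$ in terms of the half-wave propagators $e^{\pm it\Lambda}$ and apply \lemref{l2} at regularity $\gamma$ for the first component and $\gamma-1$ for the second. Your discussion of the $H^\gamma$ versus $\dot H^\gamma$ issue is a legitimate point the paper skips; note, however, that in the paper's only application (Theorem~\ref{t12}) one has $\gamma=1$, so $\gamma-1=0$ and the subtlety does not arise.
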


Following estimates  (\ref{eq=thm2A}) and (\ref{eq=L200}), we will establish the $L^2$ integrability of the local energy which take the following form:
\begin{Thm}\label{t12}
 Assume $n\geq4$  even and  $\rm(H1)$, $\rm(H2)$ fulfilled. Then, for all $\phi\in\mathcal C_0^\infty(\vert x\vert\leq\rho+1)$, we have
\begin{equation}\label{eq=thm8A}\int_0^\infty\Vert \phi \mathcal{U}(t,0)f\Vert_{{\B}}^2dt\leq C(T,\phi,n,\rho)\Vert f\Vert_{{\B}}^2.\end{equation}
\end{Thm}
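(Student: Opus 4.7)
My plan is to combine the local energy decay of Theorem~\ref{t3} with the Smith--Sogge local smoothing of Lemma~\ref{l3} through a Duhamel decomposition. Writing $\mathcal U(t,0)f = U_0(t)f + v(t)$, with $v$ solving $\partial_t^2 v-\Delta v = g$ with vanishing Cauchy data and source
$$g(s,x) = \Div_x\bigl((a(s,x)-1)\nabla_x P_1\mathcal U(s,0)f\bigr),$$
which is compactly supported in $|x|\leq\rho$ because $a-1$ is. Then $\phi\mathcal U(t,0)f = \phi U_0(t)f + \phi(v,v_t)(t)$, and the free term $\phi U_0(t)f$ already contributes at most $C\|f\|_{{\B}}^2$ to the integral in \eqref{eq=thm8A} by Lemma~\ref{l3} (with $\gamma = 1$). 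It therefore suffices to control the Duhamel part $\phi(v,v_t)$.

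To do so I would decompose the data as $f = \psi f + (1-\psi) f$ where $\psi\in\mathcal C_0^\infty(|x|\leq\rho+1)$ satisfies $\psi\equiv 1$ on $\operatorname{supp}\phi\cup\{|x|\leq\rho\}$. The compactly supported piece $\psi f$ is handled directly by Theorem~\ref{t3}:
$$\|\phi\mathcal U(t,0)\psi f\|_{{\B}}\leq C\,p(t)\,\|f\|_{{\B}},$$
and since $p(t) = 1/((t+1)\log^2(t+e))$ belongs to $L^2(\mathbb R^+)$ (indeed to $L^1$), integration in $t$ yields the desired contribution $\lesssim\|f\|_{{\B}}^2$.

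The substantive difficulty is the remaining term $\phi\mathcal U(t,0)(1-\psi)f$, whose data is not compactly supported so Theorem~\ref{t3} does not apply directly. I would expand it once more by Duhamel, obtaining a free part (again bounded in $L^2_t{\B}$ by Lemma~\ref{l3}) and a perturbed part with source $\widetilde g(s) = \Div((a-1)\nabla\mathcal U(s,0)(1-\psi)f)$ supported in $|x|\leq\rho$. Using the $TT^*$ form of Lemma~\ref{l3} together with the divergence structure of $\widetilde g$ (which is needed to avoid a derivative loss when $\widetilde g$ is only in $H^{-1}$), this step leads to an inequality of the form
$$\|\phi\mathcal U(t,0)(1-\psi)f\|_{L^2_t{\B}}\leq C\|f\|_{{\B}} + C\|\chi\mathcal U(\cdot,0)(1-\psi)f\|_{L^2_s\dot H^1}$$
for a further cutoff $\chi\in\mathcal C_0^\infty(|x|\leq\rho+1)$ with $\chi\equiv 1$ on $|x|\leq\rho$. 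The main obstacle is precisely that this bound is self-referential. I would break the circularity by invoking finite speed of propagation: split the $s$-integration at the first instant $t_0$ at which the wave from $(1-\psi)f$ can enter $\operatorname{supp}(a-1)$; for $s\leq t_0$ the perturbation is inactive and Lemma~\ref{l3} applies to the free flow, while for $s\geq t_0$ the effective Cauchy data $\mathcal U(t_0,0)(1-\psi)f$, by finite speed, decomposes into a spatially-localized component to which Theorem~\ref{t3} applies (after a time shift) and an outgoing component controlled again by Lemma~\ref{l3}. Iterating this scheme dyadically, the convergence rests on the $L^1(\mathbb R^+)$ integrability of $p$.
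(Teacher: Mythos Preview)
Your decomposition $f=\psi f+(1-\psi)f$ and the treatment of the compactly supported piece via Theorem~\ref{t3} are exactly the paper's strategy. The trouble with the $(1-\psi)f$ piece is created by the direction in which you write the Duhamel formula: expanding $\mathcal U(t,0)(1-\psi)f$ around the \emph{free} propagator puts the \emph{perturbed} evolution into the source $\widetilde g(s)=\Div\bigl((a-1)\nabla P_1\mathcal U(s,0)(1-\psi)f\bigr)$, so the resulting inequality feeds back into the quantity you want to bound. The finite-speed iteration you sketch does not close as described: each pass produces a spatially localized contribution of size $\sim C\|f\|_{\B}$ (via Theorem~\ref{t3}) together with a non-localized remainder of the same structure, and there is no contraction factor, so the series of contributions has no reason to converge. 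The opening Duhamel in your first paragraph, $\mathcal U(t,0)f=U_0(t)f+v(t)$ with $g=\Div((a-1)\nabla P_1\mathcal U f)$, is in the same unhelpful direction.

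The fix---and the paper's route, via \cite{K1}---is simply to reverse the roles in Duhamel. Since $a=1$ on $\textrm{supp}(1-\chi)$, the function $(1-\chi)P_1U_0(t)f$ solves the \emph{perturbed} equation with source $[\Delta_x,\chi]P_1U_0(t)f$, and one obtains
\[
\mathcal U(t,0)(1-\chi)f=(1-\chi)U_0(t)f-\int_0^t\mathcal U(t,s)\begin{pmatrix}0\\ [\Delta_x,\chi]P_1U_0(s)f\end{pmatrix}ds.
\]
Now the source involves only the \emph{free} evolution, is first order in $U_0(s)f$ (hence lies in $L^2$, so that $(0,\cdot)\in\B$ with no derivative loss), and is supported in $\textrm{supp}\nabla\chi\subset\{|x|\leq\rho+1\}$. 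Inserting a cutoff $\beta\in\mathcal C^\infty_0(|x|\leq\rho+1)$ equal to $1$ on $\textrm{supp}\nabla\chi$, Theorem~\ref{t3} gives $\|\phi\,\mathcal U(t,s)\beta\|_{\mathcal L(\B)}\leq Cp(t-s)$ uniformly in $s$, Lemma~\ref{l3} gives $\|\beta U_0(\cdot)f\|_{L^2_s(\B)}\leq C\|f\|_{\B}$, and Young's inequality (using $p\in L^1(\R^+)$) closes the estimate in a single step, with no self-reference and no iteration.
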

\begin{proof}
 Choose  $f\in {\B}$ and $\chi\in\mathcal{C}^\infty_0(\vert x\vert<\rho+1)$ such that
 $\chi=1$ for $ \vert x\vert\leq \rho +\half$ and $0\leq\chi\leq1$. Notice that
\begin{equation}\label{eq=thm8B}\phi \mathcal{U}(t,0)f=\phi \mathcal{U}(t,0)\chi f+ \phi \mathcal{U}(t,0)(1-\chi)f.\end{equation}
Then, combining estimates (\ref{eq=thm2A}) and \eqref{eq=L20}, we deduce \eqref{eq=L200} (see the proof of Theorem 4 in \cite{K1}).\end{proof}
\ \\
\textit{Proof of Theorem \ref{t1}.} Applying the equivalence of assumptions (H2) and (H3) for $n\geq3$ odd, we obtain (\ref{eq=estC}) for $n\geq3$ odd (see Remark 1).
Then, combining estimates (\ref{eq=thm8A}),  (\ref{eq=thm2A})    and the local estimates (\ref{eq=estB}), we deduce (\ref{eq=estC}) for $n\geq4$ even (see  \cite{K1} for more details). \hspace{12cm}$\square$

\section{ Examples of metrics $a(t,x)$} 

In this section we will apply the results for non-trapping metrics  independent of $t$  to construct time periodic metrics such that conditions (H1) and (H2) are fulfilled.  Consider the following condition
\begin{equation}\label{eq=exA}
\frac{2a}{\rho} -\frac{\vert a_t\vert}{\sqrt{\inf a}}-\vert a_r\vert\geq\beta>0
\end{equation}
with $\beta$ independent of $t$ and $x$.  It has been established that assumption  (H1) is fulfilled if $a(t,x)$ satisfies (\ref{eq=exA}) (see \cite{K1}). Thus, we suppose that $a(t,x)$ satisfies (\ref{eq=exA}) and we will introduce conditions that imply  (H2). In \cite{MT} and \cite{MT1}, Metcalfe and Tataru have established local energy decay for the solution of wave equation with time dependent perturbations, by assuming that the perturbations of the D'Alambertian ($a(t,x)-1$ for the problem \eqref{eq=lepbA}) is sufficiently small. Set
\[D_0=\{ x\ :\ \abs{x}\leq2\},\quad D_j=\{ x\ :\ 2^j\leq\abs{x}\leq2^{j+1}\},\quad j=1,2,\cdots\]
and
\[A_j={\R}\times D_j.\]
For \eqref{eq=lepbA}, the main assumption of \cite{MT} and \cite{MT1} takes the form
\[\sum_{j=0}^\infty\left(\sup_{(t,x)\in A_j}\left[\ \left\langle x\right\rangle^2\norm{\partial^2_xa(t,x)}+\left\langle x\right\rangle\abs{\nabla_xa(t,x)}+\abs{a(t,x)-1}\ \right]\right)\leq\epsilon\]
with $\epsilon>0$ sufficiently small. For $\epsilon$ sufficiently small, this condition implies that \eqref{eq=lepbA} is non-trapping (see \cite{MT1}). Thus, Metcalfe and Tataru have shown local energy decay by modifying the size of one parameter of the metric. Following this idea, we will establish examples of metrics such that (H2) is fulfilled by modifying the size $T$ of the period of $a(t,x)$. This choice is justified by the properties of $\mathcal U(t,s)$.

Let $T_1>0$ and let $(a_T)_{T\geq T_1}$ be a family of functions such that  $a_T(t,x)$  is $T$-periodic with respect to  $t$  and $a_T(t,x)$ satisfies (\ref{eq=perturbationA}) and (\ref{eq=exA}). Moreover, assume that 
\begin{equation}\label{eq=exB} a_T(t,x)=a_1(x),\quad  t\in[T_1,T],\   x\in{\R}^n.\end{equation} 
Notice that (\ref{eq=exA}) implies that $a_1(x)$ is non-trapping (see \cite{K1}). We will show that for $T$ sufficiently large  (H2) will be fulfilled for $a(t,x)=a_T(t,x)$. Notice that for $n\geq3$ odd , it has been proved in \cite{K1} that, for $T$ large enough, (\ref{eq=exA}) and (\ref{eq=exB}) imply (H3). Combing this result with Theorem \ref{t2}, we find that, for $n\geq3$ odd and  for $T$ large enough, (\ref{eq=exA}) and (\ref{eq=exB})  imply (H2). It  remains only to treat the case $n\geq4$ even. 

Consider the following Cauchy problem
\begin{equation} \label{eq=exC}  \left\{\begin{array}{c}
v_{tt}-\Div_{x}(a_1(x)\nabla_{x}v)=0,\\
(v,v_{t})(0)=f,\end{array}\right.\end{equation}
and the associated propagator
\[\mathcal{V}(t):{\B}\ni f\longmapsto (v,v_t)(t)\in{\B}.\]
Let $u$ be solution of (\ref{eq=lepbA}). For $T_1\leq t\leq T$ we have
\[\partial_t^2 u -\Div_x(a_1(x)\nabla_x u)=\partial_t^2 u -\Div_x(a_T(t,x)\nabla_x u)=0.\]
It follows that for $a(t,x)=a_T(t,x)$ we get
\begin{equation} \label{eq=exD}\mathcal{U}(t,s)=\mathcal{V}(t-s),\quad T_1\leq s<t\leq T.\end{equation}
The asymptotic behavior, when $t\rightarrow+\infty$, of the local energy of (\ref{eq=lepbA}), assuming $a(t,x)$ is independent of $t$, has been studied by many authors (see  \cite{V1}, \cite{V3} and \cite{V4}). It has been proved that, for non-trapping metrics and for $n\geq3$, the local energy decreases. To prove (H2), we will apply the following result. 
\begin{Thm}\label{t13}
Assume $n\geq4$  even. Let $\phi_1,\phi_2\in{\CI}$. Then, we have
\begin{equation} \label{eq=thm11A}\Vert \phi_1\mathcal{V}(t)\phi_2\Vert_{\mathcal{L}({\B})}\leq C_{\phi_1,\phi_2}\left\langle t\right\rangle^{1-n},\end{equation}
with $C_{\phi_1,\phi_2}>0$ independent of $t$.\end{Thm}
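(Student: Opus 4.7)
The plan is to reduce the statement to the classical local energy decay results of Vainberg and Vodev for the time-independent wave equation with a non-trapping metric. Set $P=-\Div_x(a_1(x)\nabla_x)$ as a positive self-adjoint operator on $L^2(\R^n)$; the propagator $\mathcal{V}(t)$ is then given through the spectral calculus of $P$, so that for cut-off functions $\phi_1,\phi_2$ supported near the origin the operator $\phi_1\mathcal V(t)\phi_2$ reduces to studying the truncated half-wave groups $\phi_1 e^{\pm it\sqrt{P}}\phi_2$ together with bounded factors involving powers of $\sqrt P$.

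First I would verify that $a_1(x)$ is non-trapping, which is immediate from \eqref{eq=exA} (as recalled in the paragraph preceding the theorem), and introduce the cut-off resolvent
\[R_\chi(\lambda)=\chi(P-\lambda^2)^{-1}\chi,\quad \chi\in{\CI},\]
holomorphic for $\im(\lambda)>0$, with $\chi\equiv 1$ on the supports of $\phi_1,\phi_2$. Vainberg's general theory for non-trapping stationary perturbations (\cite{V1}, \cite{V3}) then yields a holomorphic continuation of $R_\chi(\lambda)$ to the logarithmic covering of $\C\setminus\{0\}$, free of poles in some horizontal strip $\{-\delta<\im\lambda\leq 0,\ \lambda\notin i\R^-\}$ and with polynomial bounds along $\R$. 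Near $\lambda=0$ the even-dimensional setting produces a controlled logarithmic expansion of $R_\chi(\lambda)$, in contrast to the meromorphic behaviour occurring in odd dimensions.

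Next I would represent $\phi_1\mathcal{V}(t)\phi_2$ as an inverse-Fourier-type integral of $R_\chi(\lambda)$ over a horizontal contour $\{\im\lambda=A\}$ with $A>0$, and deform it down to a horizontal line just below the real axis together with a small circle surrounding the origin on the logarithmic cover, avoiding the branch cut $i\R^-$. Along the shifted horizontal part the factor $e^{-it\lambda}$ yields exponential decay in $t$. The contribution of the small circle around $\lambda=0$, analysed via the logarithmic expansion of $R_\chi(\lambda)$ and a stationary-phase type estimate, produces precisely the algebraic rate $\langle t\rangle^{1-n}$. This is the argument carried out by Vodev in \cite{V5} and \cite{V4} in exactly this framework (non-trapping metric perturbations of the Laplacian in even dimensions), which we invoke directly to obtain \eqref{eq=thm11A}.

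The main obstacle will be the careful asymptotic analysis near $\lambda=0$: the logarithmic branch structure of $R_\chi(\lambda)$ has to be combined with uniform polynomial bounds along the deformed contour in order to extract the sharp rate $\langle t\rangle^{1-n}$ rather than a weaker rate such as the one obtained in \thmref{t3}. Since this delicate analysis is already carried out in Vodev's papers under our assumptions, no new argument is needed here beyond checking that \eqref{eq=exA} forces $a_1$ to be non-trapping, so that the hypotheses of \cite{V5} and \cite{V4} are met.
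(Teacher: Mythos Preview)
Your proposal is correct and aligns with the paper's treatment: the paper does not give an independent proof of this theorem but simply records it as a known result, citing Vainberg \cite{V1}, \cite{V3} and Vodev \cite{V5}, \cite{V4}. Your sketch of the contour-deformation argument is a faithful outline of those works, and your final reduction---verifying via \eqref{eq=exA} that $a_1$ is non-trapping and then invoking \cite{V5}, \cite{V4}---is exactly what the paper does.
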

Estimate (\ref{eq=thm11A}) has been established by Vainberg in \cite{V1}, \cite{V3} but also by Vodev in \cite{V5} and \cite{V4}.
  For $n\geq4$ even we will use the following identity.

\begin{lem}\label{l4}
Let $\psi\in\mathcal{C}^\infty_0(\vert x\vert\leq\rho+1+T_1)$ be such that $\psi=1$, for $\vert x\vert\leq\rho+\half+T_1$. Then, we have
\begin{equation} \label{eq=lem5A}\mathcal{U}(T_1,0)-\mathcal{V}(T_1)=\psi(\mathcal{U}(T_1,0)-\mathcal{V}(T_1))=(\mathcal{U}(T_1,0)-\mathcal{V}(T_1))\psi.\end{equation}
\end{lem}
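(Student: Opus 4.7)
The plan is to mirror the proof of \propref{p6}, which gave the analogous identity with $U_0(T)$ in place of $\mathcal{V}(T_1)$. Two features make the argument work: both $a_T(t,x)-1$ and $a_1(x)-1$ are supported in $\{\vert x\vert \leq \rho\}$, and the cut-off $\psi$ is chosen so that $\textrm{supp}(1-\psi)$ is separated from this perturbation region by a buffer of width at least $T_1$, namely the distance a free wave travels in time $T_1$. Both identities in \eqref{eq=lem5A} are operator identities on ${\B}$, so it suffices to verify each one after applying it to an arbitrary $f\in{\B}$.

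First I would prove $(1-\psi)(\mathcal{U}(T_1,0) - \mathcal{V}(T_1)) = 0$, which is the left identity in \eqref{eq=lem5A}. Setting $(u(t), u_t(t)) = \mathcal{U}(t,0)f$ and $(v(t), v_t(t)) = \mathcal{V}(t)f$, and using $\partial_t^2 u - \Delta_x u = \Div_x((a_T(t,\cdot)-1)\nabla_x u)$ together with the analogous relation for $v$, the difference $w = u - v$ solves
\[
\partial_t^2 w - \Delta_x w = \Div_{x}\!\bigl((a_T(t,\cdot)-1)\nabla_x u\bigr) - \Div_{x}\!\bigl((a_1-1)\nabla_x v\bigr),
\]
with $(w,w_t)\vert_{t=0}=0$. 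The right-hand side is supported in $\{\vert x\vert\leq\rho\}$ for every $t$, so Duhamel's formula combined with the classical finite speed of propagation for the free d'Alembertian forces $w(T_1,\cdot)$ to be supported in $\{\vert x\vert \leq \rho + T_1\}$. Since $1-\psi$ vanishes on $\{\vert x\vert \leq \rho + \half + T_1\}$, which contains this ball, one obtains $(1-\psi)w(T_1)=0$.

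For the dual identity $(\mathcal{U}(T_1,0) - \mathcal{V}(T_1))(1-\psi) = 0$, I would apply both propagators to $(1-\psi)f$, whose initial datum vanishes on $\{\vert x\vert\leq\rho+\half+T_1\}$. Since $a_T(t,x)$ and $a_1(x)$ both equal $1$ on $\{\vert x\vert\geq\rho\}$, finite speed of propagation at speed exactly $1$ in the exterior region, where the data is concentrated, gives that both $\mathcal{U}(t,0)(1-\psi)f$ and $\mathcal{V}(t)(1-\psi)f$ vanish on $\{\vert x\vert\leq\rho+\half\}$ for every $t\in[0,T_1]$. In particular both solutions vanish on the supports of $a_T-1$ and $a_1-1$, so each one actually satisfies the \emph{free} wave equation $\partial_t^2 - \Delta_x = 0$ on all of ${\R}^{1+n}$ for $t\in[0,T_1]$, with the common Cauchy datum $(1-\psi)f$. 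Uniqueness for the free Cauchy problem then forces equality on $[0,T_1]$, in particular at $t=T_1$, which delivers the desired identity.

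The main technical point to be careful about is the finite-speed statement used in the second step: a priori the wave speed $\sqrt{a_T}$ inside $\{\vert x\vert\leq\rho\}$ may exceed $1$, so one must rule out any ``short-cut'' through the interior. The point is that the initial data is supported in $\{\vert x\vert\geq\rho+\half+T_1\}$, so any ray reaching $\{\vert x\vert\leq\rho+\half\}$ must first traverse the exterior annulus $\{\rho\leq\vert x\vert\leq\rho+\half+T_1\}$, where the speed is exactly $1$, and that already costs time strictly greater than $T_1$. Once this is granted, the rest of the argument reduces to the standard uniqueness theorem for the free d'Alembertian.
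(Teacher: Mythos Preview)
Your proof is correct and follows essentially the same approach as the paper's, exploiting finite speed of propagation together with the fact that $a_T$ and $a_1$ both equal $1$ outside $\{\vert x\vert\leq\rho\}$. Your argument for the left identity (applying Duhamel directly to the difference $u-v$) is marginally more direct than the paper's comparison of the inhomogeneous problems satisfied by $(1-\psi)u$ and $(1-\psi)v$, and your explicit treatment of the finite-speed subtlety in the second step is a welcome clarification that the paper leaves implicit.
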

\begin{proof}
Choose $g\in{\B}$ and let $w$ be the function defined by $(w,w_t)(t)=\mathcal{U}(t,0)(1-\psi)g$. The finite speed of propagation implies that, for  $0\leq t\leq T_1$ and $\vert x\vert\leq\rho+\half$, we have $w(t,x)=0$. Then, we obtain
\begin{equation} \label{eq=lem5B} \Div_x(a_1(x)\nabla_x)=\Delta_x=\Div_x(a(t,x)\nabla_x),\quad\textrm{for } \vert x\vert>\rho.\end{equation}
Thus, $w$ is solution on $0\leq t\leq T_1$ of the problem
\[  \left\{\begin{array}{c}
w_{tt}-\Div_{x}(a_1(x)\nabla_{x}w)=0,\\
(w,w_{t})(0)=(1-\psi)g\end{array}\right.\]
and it follows that
\begin{equation} \label{eq=lem5C}(\mathcal{U}(T_1,0)-\mathcal{V}(T_1))(1-\psi)=0.\end{equation}
Now, let  $u$ and $v$ be the functions defined by $(u,u_t)(t)=\mathcal{U}(t,0)g$ and $(v,v_t)(t)=\mathcal{V}(t)g$ with $g\in{\B}$. Applying (\ref{eq=lem5B}), we can easily show that on  $(1-\psi)u$ is the solution of
\[  \left\{\begin{array}{c}
\partial_t^2((1-\psi)u))-\Delta_x((1-\psi)u))=[\Delta_x,\psi]u,\\
(((1-\psi)u),((1-\psi)u)_{t})(0)=(1-\psi)g,\end{array}\right.\]
and $(1-\psi)v$ is the solution of
\[  \left\{\begin{array}{c}
\partial_t^2(((1-\psi)v))-\Delta_x((1-\psi)v))=[\Delta_x,\psi]v,\\
(((1-\psi)v),((1-\psi)v)_{t})(0)=(1-\psi)g.\end{array}\right.\]
We have
\begin{equation} \label{eq=lem5D}(1-\psi)(\mathcal{U}(T_1,0)-\mathcal{V}(T_1))=0.\end{equation}
Combining (\ref{eq=lem5C}) and (\ref{eq=lem5D}), we get (\ref{eq=lem5A}).\end{proof}

From now on, we consider the cut-off function $\psi\in\mathcal{C}^\infty_0(\vert x\vert\leq\rho+1+T_1)$ such that $\psi=1$, for $\vert x\vert\leq\rho+\half+T_1$.

\begin{lem}\label{l5} Assume $n\geq4$  even and let $(a_T)_{T\geq T_1}$ satisfy \eqref{eq=exA} and \eqref{eq=exB}. Then, for $T$ large enough and for $a(t,x)=a_T(t,x)$, we have
\begin{equation} \label{eq=lem6A}
\mathcal{U}(NT,0)\psi=\mathcal{V}(NT)\psi+\sum_{k=0}^{N-1}\mathcal{V}(kT+T-T_1)B_N^k,\quad  N\geq1,\end{equation}
where, for all $N\geq1$ and all $k\in\{0,\ldots,N-1\}$, $B_N^k$ satisfies
\begin{equation} \label{eq=lem6B}  \left\{\begin{array}{c}
B_N^k=\psi B_N^k,\\
\ \\
\displaystyle\Vert B_N^k\Vert_{\mathcal{L}({\B})}\leq\frac{C}{(N-k)\ln^2(N-k+e)}
\end{array}\right.\end{equation}
with $C>0$ independent of $N$, $k$ and $T$.
\end{lem}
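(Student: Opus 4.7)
The plan is induction on $N$, driven by the semigroup relation $\mathcal{U}((N+1)T,0)=\mathcal{U}(T,0)\,\mathcal{U}(NT,0)$ from Proposition~\ref{p1}. Set $Q:=\mathcal{U}(T_1,0)-\mathcal{V}(T_1)$; by Lemma~\ref{l4}, $Q=\psi Q=Q\psi$ and $\|Q\|_{\mathcal{L}({\B})}\leq c_0$ with $c_0=c_0(T_1)$ independent of $T$. Since $a_T(t,x)=a_1(x)$ on $[T_1,T]$, \eqref{eq=exD} yields $\mathcal{U}(T,T_1)=\mathcal{V}(T-T_1)$, so
\[\mathcal{U}(T,0)=\mathcal{V}(T-T_1)\,\mathcal{U}(T_1,0)=\mathcal{V}(T)+\mathcal{V}(T-T_1)\,Q.\]
Substituting this identity into $\mathcal{U}((N+1)T,0)=\mathcal{U}(T,0)\mathcal{U}(NT,0)$ and applying the inductive hypothesis \eqref{eq=lem6A} directly produces \eqref{eq=lem6A} at level $N+1$ with the recursion
\[B_{N+1}^0=Q\,\mathcal{U}(NT,0)\psi,\qquad B_{N+1}^k=B_N^{k-1}\ \ (1\leq k\leq N).\]
Iterating gives the closed form $B_N^k=Q\,\mathcal{U}((N-k-1)T,0)\psi$, and $B_N^k=\psi B_N^k$ is immediate from $Q=\psi Q$.

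The heart of the proof is the norm bound. Since $B_N^k=B_{N-k}^0$ depends on $(N,k)$ only through $m:=N-k$, and $\|B_m^0\|\leq c_0\,\|\psi\,\mathcal{U}((m-1)T,0)\,\psi\|$ (because $Q=Q\psi$), it is enough to prove
\[(\star)\qquad \|\psi\,\mathcal{U}(NT,0)\,\psi\|_{\mathcal{L}({\B})}\leq \frac{C_1}{(N+1)\ln^2(N+1+e)},\qquad N\geq 0,\]
for some $C_1$ independent of $N$ and $T$, once $T$ is large enough. I would establish $(\star)$ by induction on $N$, jointly with \eqref{eq=lem6B}. Applying $\psi\cdot\psi$ to the decomposition of $\mathcal{U}(NT,0)\psi$ and using $B_N^k=\psi B_N^k$,
\[\psi\mathcal{U}(NT,0)\psi=\psi\mathcal{V}(NT)\psi+\sum_{k=0}^{N-1}\bigl(\psi\mathcal{V}(kT+T-T_1)\psi\bigr)B_N^k.\]
Theorem~\ref{t13} gives the autonomous decay $\|\psi\mathcal{V}(t)\psi\|\leq C_2\langle t\rangle^{1-n}$, summable since $n-1\geq 3$. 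For $T\geq 2T_1$ one has $\langle kT+T-T_1\rangle^{1-n}\leq 2^{n-1}T^{1-n}(k+1)^{1-n}$; together with the inductive bound on $B_N^k$ and the convolution estimate
\[\sum_{k=0}^{N-1}\frac{(k+1)^{1-n}}{(N-k)\ln^{2}(N-k+e)}\leq\frac{C_3}{N\ln^{2}(N+e)},\]
proved by splitting at $k=N/2$ and using $\sum_k(k+1)^{1-n}<\infty$ for $n\geq 4$, this chain of estimates gives
\[\|\psi\mathcal{U}(NT,0)\psi\|\leq\bigl(C_2+C_1\,c_0\,C_2\,2^{n-1}C_3\bigr)\,T^{1-n}\cdot\frac{1}{(N+1)\ln^{2}(N+1+e)}.\]

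The main obstacle is the careful bookkeeping of constants needed to keep $C$ independent of $T$. The inductive step closes only if $(C_2+C_1 c_0 C_2\,2^{n-1}C_3)\,T^{1-n}\leq C_1$, which forces $T\geq T_0$ with $T_0$ large enough that the autonomous gain $T^{1-n}$ from Theorem~\ref{t13} absorbs the inductive constant. Once $T_0$ is fixed (in terms of $n$, $T_1$, $c_0$, $C_2$, $C_3$), one may take $C_1:=2C_2 T_0^{1-n}$ and $C:=c_0 C_1$, independent of $T$. The base cases $N=0,1$ are immediate: $\|\psi^2\|\leq 1$ handles $(\star)$ at $N=0$, and $\|B_1^0\|=\|Q\psi\|\leq c_0$ handles \eqref{eq=lem6B} at $N=1$.
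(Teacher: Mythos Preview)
Your argument is essentially the paper's own: the same decomposition $\mathcal{U}(T,0)=\mathcal{V}(T)+\mathcal{V}(T-T_1)Q$, the same recursion $B_{N+1}^k=B_N^{k-1}$ with $B_{N+1}^0=Q\,\mathcal{U}(NT,0)\psi$, and the same closing of the induction via the gain $T^{1-n}$ from Theorem~\ref{t13} in the convolution sum. The paper bounds $B_{N+1}^0$ directly through its expansion $\sum_k Q\psi\mathcal{V}(kT+T-T_1)\psi B_N^k+Q\psi\mathcal{V}(NT)\psi$, whereas you pass through the auxiliary statement $(\star)$ using the closed form $B_m^0=Q\,\mathcal{U}((m-1)T,0)\psi$; these are the same computation packaged differently.

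There is, however, a slip in your constant bookkeeping. Your choice $C_1:=2C_2T_0^{1-n}$ tends to $0$ as $T_0\to\infty$ and therefore cannot cover the base case $(\star)$ at $N=0$, which requires $C_1\geq\|\psi^2\|_{\mathcal{L}(\B)}\ln^2(1+e)$ (and multiplication by $\psi^2$ on the $\dot H^1$ component need not have norm $\leq 1$, so ``$\|\psi^2\|\leq 1$'' is not quite right either). The correct order of choices---exactly what the paper does---is the reverse: first fix $C$ (equivalently $C_1$) from the base case, so that $C\geq c_0\ln^2(1+e)$ is a constant depending only on $T_1$ and $\psi$; then choose $T$ large enough that the inductive inequality $(C_2+C_1c_0C_2\,2^{n-1}C_3)T^{1-n}\leq C_1$ closes, which is always possible for fixed $C_1$. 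With this reversal your proof is complete and matches the paper's.
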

\begin{proof}
We will show (\ref{eq=lem6A}) and (\ref{eq=lem6B}), by induction. First, set
\[B^0_1=\mathcal{U}(T_1,0)-\mathcal{V}(T_1).\]
We deduce from (\ref{eq=lem5A}) that
\begin{equation} \label{eq=lem6B1}B^0_1=\psi B^0_1=B^0_1\psi.\end{equation}
Moreover, statement (\ref{eq=exD}) implies
\begin{equation} \label{eq=lem6C}\mathcal{U}(T,0)=\mathcal{V}(T-T_1)\mathcal{U}(T_1,0)=\mathcal{V}(T-T_1)B^0_1+\mathcal{V}(T).\end{equation}
Combining (\ref{eq=lem6B1}) and (\ref{eq=lem6C}), we can see that (\ref{eq=lem6A}) is true for $N=1$. 
Now, assume (\ref{eq=lem6A}) and (\ref{eq=lem6B}) hold for $N\geq1$. Set
 $S=\mathcal{U}(T_1,0)-\mathcal{V}(T_1)$. Using(\ref{eq=lem5A}) we get
\begin{equation} \label{eq=lem6D} S=\psi S=S\psi.\end{equation}
Then,  we obtain
 \[\mathcal{U}((N+1)T,0)\psi=\mathcal{U}(T,0)\mathcal{U}(NT,0)\psi=(\mathcal{V}(T)+\mathcal{V}(T-T_1)S)\mathcal{U}(NT,0)\psi.\]
The induction assumption yields
\[\mathcal{U}((N+1)T,0)\psi=(\mathcal{V}(T)+\mathcal{V}(T-T_1)S)\left(\mathcal{V}(NT)\psi+\sum_{k=0}^{N-1}\mathcal{V}(kT+T-T_1)B_N^k\right),\]
where, for all $k\in\{0,\ldots,N-1\}$, $B_N^k$ satisfies (\ref{eq=lem6B}).
It follows that
\begin{equation} \label{eq=lem6E}\mathcal{U}((N+1)T,0)\psi=\mathcal{V}((N+1)T)\psi+\sum_{k=0}^{N}\mathcal{V}(kT+T-T_1)B_{N+1}^k,\end{equation}
where, for all $k\in\{1,\ldots,N\}$, $B^k_{N+1}=B^{k-1}_N$ and 
\[B^0_{N+1}=\sum_{k=0}^{N-1} S\mathcal{V}(kT+T-T_1)B_N^k+S\mathcal{V}(NT)\psi.\]
The induction assumption implies that, for all $k\in\{1,\ldots,N\}$, $B^k_{N+1}=B^{k-1}_N$ satisfies (\ref{eq=lem6B}). To conclude, it only remain to show that $B^0_{N+1}$ satisfies (\ref{eq=lem6B}). First,  (\ref{eq=lem6E}) implies
\begin{equation} \label{eq=lem6F}B^0_{N+1}=\psi B^0_{N+1}\end{equation}
and we get
\begin{equation} \label{eq=lem6G}B^0_{N+1}=\sum_{k=0}^{N-1} S\psi \mathcal{V}(kT+T-T_1)\psi B_N^k+S\psi\mathcal{V}(NT)\psi.\end{equation}
Estimate (\ref{eq=lem6B}) implies that, for $k\in\{0,\ldots,N-1\}$, we have
\begin{equation} \label{eq=lem6H}\Vert B_N^k\Vert_{\mathcal{L}({\B})}\leq\frac{C}{(N-k)\ln^2(N-k+e)},\end{equation} 
with $C>0$ independent of $k$, $N$ and $T$.
From (\ref{eq=thm11A}), for all $k\in\{0,\ldots,N\}$, we obtain
\[\Vert\psi\mathcal{V}(kT+T-T_1)\psi\Vert_{\mathcal{L}({\B})}\leq\frac{C_\psi}{(kT+1+T-T_1)\ln^2(kT+(T-T_1)+e)}.\]
If we choose $T\geq2$, the last inequality becomes
\begin{equation} \label{eq=lem6I}\Vert\psi \mathcal{V}(kT+T-T_1)\psi\Vert_{\mathcal{L}({\B})}\leq\frac{C_1}{T(k+1)\ln^2(k+1+e)},\end{equation}
where $C_1=2C(T_1)$ is independent of $k$, $N$ and $T$. Notice that $\Vert S\Vert$ is independent of $T$, $k$ and $N$. Combining representation  (\ref{eq=lem6G}) and estimates (\ref{eq=lem6H}), (\ref{eq=lem6I}), we find
\begin{equation} \label{eq=lem6J}\begin{array}{lll}\Vert B^0_{N+1}\Vert_{\mathcal{L}({\B})}&\leq& \displaystyle\frac{C_1C}{T}\sum_{k=0}^{N-1}\frac{1}{(N-k)\ln^2(N-k+e)}\cdot\frac{1}{(k+1)\ln^2(k+1+e)}\\
\ \\
\ &\ &\displaystyle+\frac{C_1}{(NT+1)\ln^2(N+1+e)}.\end{array}\end{equation}
Thus, we get
\begin{equation} \label{eq=lem6M}\Vert B^0_{N+1}\Vert_{\mathcal{L}({\B})}\leq\frac{4CC_1C_2+2C_1}{T}\cdot\frac{1}{(N+1)\ln^2(N+1+e)}.\end{equation}
 It follows from estimate (\ref{eq=lem6M}) and statement (\ref{eq=lem6F}) that if we choose $T$ such that $T\geq2$ and $\displaystyle\frac{4CC_1C_2+2C_1}{T}\leq C$, $B^0_{N+1}$ will satisfy (\ref{eq=lem6B}). Since the value of  $T$ is independent of $N$, by combining this result with (\ref{eq=lem6B1}) and (\ref{eq=lem6C}), we deduce that (\ref{eq=lem6A}) and (\ref{eq=lem6B}) hold for all  $N\geq1$.\end{proof}
From now on, we set $\beta\in\mathcal{C}^\infty_0(\vert x\vert\leq\rho+\frac{1}{4})$ such that $\beta=1$ for $\vert x\vert\leq\rho+\frac{1}{5}$.
\begin{lem}\label{l6}
Assume  $n\geq4$  even and let $(a_T)_{T\geq T_1}$ satisfy \eqref{eq=exA} and \eqref{eq=exB}. Let $s\in[T_1,T]$. Then, for $T$ large enough and $a(t,x)=a_T(t,x)$, we obtain
\begin{equation} \label{eq=lem7A}
\mathcal{U}(NT,s)\beta=\mathcal{V}(NT-s)\beta+\sum_{k=0}^{N-1}\mathcal{V}(kT+T-T_1)D_N^k(s),\quad  N\geq2,\end{equation}
where, for all $N\geq2$ and all $k\in\{0,\ldots,N-1\}$, $D_N^k(s)$ satisfies
\begin{equation} \label{eq=lem7B}  \left\{\begin{array}{c}
D_N^k(s)=\psi D_N^k(s),\\
\ \\
\displaystyle\Vert D_N^k(s)\Vert_{\mathcal{L}({\B})}\leq\frac{C}{(N-k)\ln^2(N-k+e)}
\end{array}\right.\end{equation}
with $C>0$ independent of $s$, $N$, $k$ and $T$.
\end{lem}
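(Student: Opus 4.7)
The plan is to mimic the induction used in the proof of Lemma \ref{l5}, now starting the time evolution at $s \in [T_1,T]$ instead of at $0$. The key identities are the group law together with the periodicity relation (\ref{eq=prop1A}), which give $\mathcal{U}((N+1)T,s) = \mathcal{U}((N+1)T,NT)\,\mathcal{U}(NT,s) = \mathcal{U}(T,0)\,\mathcal{U}(NT,s)$, and the identity (\ref{eq=exD}) which yields $\mathcal{U}(T,s) = \mathcal{V}(T-s)$ since $s \geq T_1$. Writing $S = \mathcal{U}(T_1,0) - \mathcal{V}(T_1)$ and using (\ref{eq=exD}) one more time at the endpoint $t = T$, we also have $\mathcal{U}(T,0) = \mathcal{V}(T) + \mathcal{V}(T-T_1)S$, with $S = \psi S = S\psi$ by Lemma \ref{l4}.

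For the base case $N = 2$, I would compute directly
\[
\mathcal{U}(2T,s)\beta \;=\; \mathcal{U}(T,0)\,\mathcal{V}(T-s)\beta \;=\; \mathcal{V}(2T-s)\beta + \mathcal{V}(T-T_1)\,S\mathcal{V}(T-s)\beta,
\]
so setting $D_2^0(s) = S\mathcal{V}(T-s)\beta$ and $D_2^1(s) = 0$ gives (\ref{eq=lem7A}); the property $D_2^0(s) = \psi D_2^0(s)$ comes from $S = \psi S$, and energy conservation for $\mathcal{V}$ (time-independent metric $a_1(x)$) together with $\Vert S\Vert \leq C$ gives a uniform bound on $\Vert D_2^0(s)\Vert$. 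For the inductive step, I would substitute the representation for $N$ into $\mathcal{U}((N+1)T,s)\beta = [\mathcal{V}(T) + \mathcal{V}(T-T_1)S]\,\mathcal{U}(NT,s)\beta$ and collect terms. Reindexing $k \mapsto k+1$ in the principal sum produces the terms $\mathcal{V}(jT + T - T_1)D_N^{j-1}(s)$ for $1 \leq j \leq N$ (which become $D_{N+1}^j$ and inherit (\ref{eq=lem7B}) from the induction hypothesis), while the $\mathcal{V}(T-T_1)$-prefactor captures
\[
D_{N+1}^0(s) \;=\; S\mathcal{V}(NT-s)\beta + \sum_{k=0}^{N-1} S\,\mathcal{V}(kT + T - T_1)\,D_N^k(s).
\]
The identity $D_{N+1}^0(s) = \psi D_{N+1}^0(s)$ is immediate from $S = \psi S$.

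To bound $D_{N+1}^0(s)$, I would exploit $S = S\psi$ and $D_N^k(s) = \psi D_N^k(s)$ to sandwich each $\mathcal{V}$ between compactly supported cut-offs, then invoke Theorem \ref{t13}. Used in the weaker form $\Vert\phi_1\mathcal{V}(t)\phi_2\Vert_{\mathcal{L}(\B)} \leq C/((t+1)\ln^2(t+e))$ valid for $n \geq 4$ even, this yields (taking $T \geq 2$)
\[
\Vert\psi\mathcal{V}(kT + T - T_1)\psi\Vert_{\mathcal{L}(\B)} \;\leq\; \frac{C_1}{T(k+1)\ln^2(k+1+e)},
\]
exactly as in Lemma \ref{l5}, together with an analogous estimate for $\Vert\psi\mathcal{V}(NT-s)\beta\Vert$ (where the asymmetry is harmless since Theorem \ref{t13} applies to any pair of compactly supported cut-offs, and $NT-s \geq (N-1)T$ for $s \leq T$). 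Combining these with the inductive bound on $\Vert D_N^k(s)\Vert$, the convolution-type inequality
\[
\sum_{k=0}^{N-1} \frac{1}{(k+1)(N-k)\ln^2(k+1+e)\ln^2(N-k+e)} \;\leq\; \frac{C_2}{(N+1)\ln^2(N+1+e)}
\]
(already used in the proof of Lemma \ref{l5}) then delivers $\Vert D_{N+1}^0(s)\Vert \leq \frac{C_*}{T}\cdot\frac{1}{(N+1)\ln^2(N+1+e)}$, with $C_*$ independent of $N$, $s$ and $T$. Choosing $T$ large enough that $C_*/T \leq C$ closes the induction.

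The main difficulty is identical to the one already resolved in Lemma \ref{l5}: keeping every implicit constant independent of the induction step, which is precisely why the $1/T$ factor coming from (\ref{eq=thm11A}) is indispensable. The only genuinely new element is the \emph{boundary} term $S\mathcal{V}(NT-s)\beta$ produced by starting at time $s$ rather than $0$; since $NT-s \geq (N-1)T \geq T$ for $N \geq 2$ and $s \in [T_1,T]$, this term also has the required decay by Theorem \ref{t13} with a constant independent of $s$. Note that the dependence on $s$ of $D_N^k(s)$ enters only through the operator $\mathcal{V}(T-s)$ introduced in the base case and is transported through the induction without affecting the norm bounds, which is what guarantees the constant $C$ in (\ref{eq=lem7B}) is independent of $s$.
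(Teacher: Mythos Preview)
Your proposal is correct and follows essentially the same approach as the paper: the paper establishes the base case $N=2$ via $\mathcal U(2T,s)\beta=(\mathcal V(T)+\mathcal V(T-T_1)S)\mathcal V(T-s)\beta$ and then simply says ``repeating the arguments used for proving (\ref{eq=lem6A}) and (\ref{eq=lem6B})'', which is exactly the induction you have written out in detail. Your identification of the only new feature---the boundary term $S\mathcal V(NT-s)\beta$ and the observation that $NT-s\geq (N-1)T$ makes Theorem~\ref{t13} applicable with constants independent of $s$---is precisely the point the paper leaves implicit.
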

\begin{proof}
Since $s\in[T_1,T]$, we have $\mathcal U(T,s)=\mathcal V(T-s)$. It follows that
\[ \mathcal U(2T,s)\beta=(\mathcal V(T)+\mathcal V(T-T_1)S)\mathcal V(T-s)\beta=\mathcal V(2T-s)\beta+\mathcal V(T-T_1)D^1_2(s),\]
where $D^1_2(s)=S\mathcal V(T-s)\beta$.  Taking into account estimate (\ref{eq=thm11A}), it is easy to see that $D^1_2(s)$ satisfies (\ref{eq=lem7B}) for $N=2$. Consequently, repeating the arguments used for proving (\ref{eq=lem6A}) and (\ref{eq=lem6B}), we deduce that for $T$ large enough (\ref{eq=lem7A}) and (\ref{eq=lem7B}) are satisfied for all integers $N\geq2$.\end{proof}
\begin{lem}\label{l7}
Assume $n\geq4$  even and let $(a_T)_{T\geq T_1}$ satisfy \eqref{eq=exA} and \eqref{eq=exB}. Assume also that conditions \eqref{eq=lem6A}, \eqref{eq=lem6B}, \eqref{eq=lem7A} and \eqref{eq=lem7B}, are fulfilled for $T>2$ and $a(t,x)=a_T(t,x)$. Then, for all  $N\geq1$,\\
 $\phi_1\in\mathcal{C}^\infty_0(\vert x\vert\leq\rho+1+3T)$  and all $0\leq s\leq NT$, we have
\begin{equation} \label{eq=lem8A}\displaystyle \Vert\phi_1\mathcal U(NT,0)\psi\Vert\leq\frac{C}{(N+1)\ln^2(N+e)},\end{equation}
\begin{equation} \label{eq=lem8B}\displaystyle \Vert\phi_1\mathcal U(NT,s)\beta\Vert\leq\frac{C'}{(NT-s+1)\ln^2(NT-s+e)}\end{equation}
with $C,C'>0$ independent of $s$ and $N$.
\end{lem}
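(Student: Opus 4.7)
The plan is, first, to establish \eqref{eq=lem8A} by applying the decomposition \eqref{eq=lem6A} from Lemma \ref{l5} and bounding each term with Theorem \ref{t13}, reducing the estimate to a standard logarithmic convolution sum; and second, to reduce \eqref{eq=lem8B} to a generalization of \eqref{eq=lem8A} via the periodicity identity \eqref{eq=prop1A} and finite speed of propagation.

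For \eqref{eq=lem8A}, multiplying \eqref{eq=lem6A} on the left by $\phi_1$ and using $B_N^k=\psi B_N^k$ from \eqref{eq=lem6B} gives
\[
\phi_1\mathcal U(NT,0)\psi = \phi_1\mathcal V(NT)\psi + \sum_{k=0}^{N-1}\phi_1\mathcal V(kT+T-T_1)\psi\, B_N^k.
\]
Theorem \ref{t13} yields $\|\phi_1\mathcal V(t)\psi\|\leq C_{\phi_1,\psi}\langle t\rangle^{1-n}$, which for $n\geq 4$ and $T$ large is dominated by $C'/(T(k+1)\ln^2(k+1+e))$ when $t=kT+T-T_1$. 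Combined with \eqref{eq=lem6B} and the observation that $\|\phi_1\mathcal V(NT)\psi\|$ is itself much smaller than the desired bound, the estimate reduces to the convolution
\[
\sum_{k=0}^{N-1}\frac{1}{(k+1)\ln^2(k+1+e)}\cdot\frac{1}{(N-k)\ln^2(N-k+e)} \leq \frac{C_0}{(N+1)\ln^2(N+e)}.
\]
This standard bound follows by splitting at $k=\lfloor N/2\rfloor$: for $k\leq N/2$ one uses $(N-k)\ln^2(N-k+e)\geq c\,N\ln^2(N+e)$ together with the convergence of $\sum_k 1/((k+1)\ln^2(k+1+e))$, and the range $k>N/2$ is treated symmetrically.

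For \eqref{eq=lem8B}, write $s=mT+s'$ with $m=\lfloor s/T\rfloor$ and $s'\in[0,T)$; the boundary case $s=NT$ follows from Proposition \ref{p2}, so we may assume $m+1\leq N$. By \eqref{eq=prop1A},
\[
\mathcal U(NT,s) = \mathcal U(NT,(m+1)T)\,\mathcal U((m+1)T,s) = \mathcal U((N-m-1)T,0)\,\mathcal U(T,s').
\]
Proposition \ref{p2} bounds $\|\mathcal U(T,s')\beta\|$ by $Ce^{AT}$, and by finite speed of propagation its range lies in a fixed ball $\{|x|\leq R\}$ with $R$ depending only on $T$. Picking $\tilde\psi\in\mathcal C^\infty_0$ equal to $1$ on this ball, we have $\mathcal U(T,s')\beta = \tilde\psi\,\mathcal U(T,s')\beta$, so it suffices to bound $\phi_1\mathcal U((N-m-1)T,0)\tilde\psi$. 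The proof of Lemma \ref{l5} carries over verbatim with $\tilde\psi$ in place of $\psi$ on the right, since the identity $\mathcal U(T,0)=\mathcal V(T)+\mathcal V(T-T_1)S$ and the relations $S=\psi S=S\psi$ are independent of the right-cutoff, and Theorem \ref{t13} still provides $\|\psi\mathcal V(t)\tilde\psi\|\leq C_{\tilde\psi}\langle t\rangle^{1-n}$. Step 1 then yields
\[
\|\phi_1\mathcal U((N-m-1)T,0)\tilde\psi\|\leq \frac{C_4}{(N-m)\ln^2(N-m+e)}
\]
for $N-m\geq 2$; the remaining cases $N-m\in\{0,1\}$ (where $NT-s\leq T$) follow from Proposition \ref{p2}. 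Since $NT-s=(N-m)T-s'$ with $s'\in[0,T)$, the quantity $(NT-s+1)\ln^2(NT-s+e)$ is comparable, up to $T$-dependent constants, to $(N-m)\ln^2(N-m+e)$, which yields \eqref{eq=lem8B}.

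The main obstacle is the borderline convolution bound in Step 1, where the decay rate $1/((N+1)\ln^2(N+e))$ must be preserved without any loss of logarithmic factor; the extension of Lemma \ref{l5} to the compactly supported cutoff $\tilde\psi$ in Step 2 is essentially formal, but one must observe explicitly that the resulting constant depends only on $T$ (through $R$ and $\tilde\psi$), not on $N$ or $s$.
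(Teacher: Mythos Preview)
Your treatment of \eqref{eq=lem8A} is essentially identical to the paper's: both multiply the decomposition \eqref{eq=lem6A} by $\phi_1$, use $B_N^k=\psi B_N^k$ to insert the cutoff, invoke Theorem~\ref{t13} for $\phi_1\mathcal V(\cdot)\psi$, and reduce to the discrete logarithmic convolution bound.

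For \eqref{eq=lem8B} the two arguments diverge. The paper writes $s=lT+s'$ with $0\leq s'<T$, so that $\mathcal U(NT,s)=\mathcal U((N-l)T,s')$, and then splits into two cases. When $s'\in[T_1,T]$ it applies Lemma~\ref{l6} directly---this is precisely where the hypotheses \eqref{eq=lem7A}--\eqref{eq=lem7B} are used---obtaining a decomposition of $\mathcal U((N-l)T,s')\beta$ analogous to \eqref{eq=lem6A}, which is then estimated as in the first part. When $s'\in[0,T_1]$ the paper goes \emph{backwards}: since the support of $\mathcal U(0,s')\beta f$ stays inside $\{|x|\leq\rho+\tfrac14+T_1\}\subset\{\psi=1\}$, one can write $\phi_1\mathcal U((N-l)T,s')\beta=\phi_1\mathcal U((N-l)T,0)\psi\,\mathcal U(0,s')\beta$ and invoke \eqref{eq=lem8A} with the original $\psi$, no extension needed.

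Your route instead shifts forward by a full period, $\mathcal U(NT,s)=\mathcal U((N-m-1)T,0)\,\mathcal U(T,s')$, and absorbs the enlarged support of $\mathcal U(T,s')\beta$ into a $T$-dependent cutoff $\tilde\psi$; this forces you to rerun the induction of Lemma~\ref{l5} with $\tilde\psi$ on the right. That does work---the crucial point, which you identify, is that the ``contraction coefficient'' in the recursion for $\tilde B_{N+1}^0$ involves only $\|\psi\mathcal V(\cdot)\psi\|$ (via $S=S\psi$ and $\tilde B_N^k=\psi\tilde B_N^k$), so the same threshold on $T$ guarantees closure, merely with a larger induction constant $\tilde C$ depending on $\tilde\psi$. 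Thus your argument is correct and in fact shows that the hypotheses \eqref{eq=lem7A}--\eqref{eq=lem7B} are not actually needed for this lemma; the paper's approach, by contrast, uses those hypotheses as stated and thereby avoids revisiting the induction of Lemma~\ref{l5}. The paper's case split is somewhat cleaner given the available lemmas, while your unification trades Lemma~\ref{l6} for a mild strengthening of Lemma~\ref{l5}.
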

\begin{proof}
Since $T>2$,  estimate (\ref{eq=thm11A}) implies
\begin{equation} \label{eq=lem8C}\Vert \phi_1\mathcal{V}(kT)\phi_2\Vert_{\mathcal{L}({\B})}\leq \frac{C_2}{(k+1)\ln^2(k+e)},  k\in\mathbb N,\end{equation}
with $C_2$ independent of $k$. The representation (\ref{eq=lem6A}) can be written in the form
\[\phi_1\mathcal{U}(NT,0)\psi=\phi_1\mathcal{V}(NT)\psi+\sum_{k=0}^{N-1}\phi_1\mathcal{V}(kT+T-T_1)\psi B_N^k.\]
Combining this representation with estimates  (\ref{eq=lem6B}) and (\ref{eq=lem8C}), we get
\[\begin{array}{lll}\displaystyle\Vert \phi_1\mathcal{U}(NT,0)\beta\Vert_{\mathcal{L}({\B})}&\displaystyle\leq&\displaystyle\frac{C_3}{(N+1)\ln^2(N+e)}\\
\ \\
\displaystyle\ &\ &\displaystyle+C'_3\sum_{k=0}^{N-1}\frac{1}{(N-k)\ln^2(N-k+e)}\cdot\frac{1}{(k+1)\ln^2(k+1+e)}\end{array}\]
and this estimate implies (\ref{eq=lem8A}). Let $s\in[0,NT]$ and let $l\in\{0,\ldots,N\}$ be such that $s=lT+s'$, with $0\leq s'<T$. We have $\mathcal U(NT,s)=\mathcal U((N-l)T,s')$. We start by assuming $s'\in[T_1,T]$. Applying (\ref{eq=lem7A}) and (\ref{eq=lem7B}), for $N-l\geq2$ we obtain
\begin{equation} \label{eq=lem8D}\phi_1\mathcal{U}(NT,s)\beta=\phi_1\mathcal{U}((N-l)T,s')\beta=\phi_1\mathcal{V}((NT-s)\beta+\sum_{k=0}^{N-l-1}\phi_1\mathcal{V}(kT+T-T_1)\psi D_N^k(s'),\end{equation}
where $D_N^k(s')$ satisfying (\ref{eq=lem7B}). Combining estimates (\ref{eq=lem7B}), (\ref{eq=thm11A}) and the representation (\ref{eq=lem8D}), we obtain
\[\begin{array}{lll}\displaystyle\Vert \phi_1\mathcal{U}(NT,s)\beta\Vert_{\mathcal{L}({\B})}&\leq&\displaystyle\frac{C_4}{(N-l+1)\ln^2(N-l+e)}\\
\ \\
\displaystyle\ &\ &\displaystyle+C_4'\sum_{k=0}^{N-l-1}\frac{1}{(N-l-k)\ln^2(N-l-k+e)}\cdot\frac{1}{(k+1)\ln^2(k+1+e)},\end{array}\]
with $C_4,C_4'>0$  independent of $l$, $s'$ and $N$. Thus, we get
\begin{equation} \label{eq=lem8F}\displaystyle \Vert\phi_1\mathcal U(NT,s)\beta\Vert\leq\frac{C_5}{(N-l+1)\ln^2(N-l+e)}.\end{equation}
Notice that
\[\frac{(N-l+1)\ln^2(N-l+e)}{(NT-s+T)\ln^2(NT-s+Te)}\leq C_6\]
with $C_6$ independent of $s$, $N$ and $l$. Consequently, condition  (\ref{eq=lem8F}) implies (\ref{eq=lem8B}). For $N-l=1$, we have  $\mathcal U(NT,s)=\mathcal V(NT-s)$ and we deduce easily (\ref{eq=lem8B}). 

Now, assume  $s'\in[0,T_1]$. The finite speed of propagation implies
\[\phi_1\mathcal{U}(NT,s)\beta=\phi_1\mathcal{U}((N-l)T,s')\beta=\phi_1\mathcal{U}((N-l)T,0)\psi\mathcal{U}(0,s')\beta\]
and we obtain  (\ref{eq=lem8B}) by applying (\ref{eq=lem8A}).\end{proof}

\begin{Thm}\label{t14}
Assume $n\geq4$  even and let $(a_T)_{T\geq T_1}$ satisfy \eqref{eq=exA} and \eqref{eq=exB}. Then, for $T$ large enough and for $a(t,x)=a_T(t,x)$, assumption $\rm(H2)$ is fulfilled.
\end{Thm}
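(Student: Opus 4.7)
The plan is to reduce assumption $\rm(H2)$ to the pointwise decay estimate \eqref{a} for the given cutoffs $\psi_1,\psi_2$. Indeed, once
\[\Vert\psi_1\mathcal U(NT,0)\psi_2\Vert_{\mathcal L({\B})}\leq\frac{C}{(N+1)\ln^2(N+e)},\quad N\in\mathbb N,\]
is established, the Neumann-type expansion
\[R_{\psi_1,\psi_2}(\theta)=-e^{i\theta}\sum_{N=0}^\infty\psi_1\mathcal U(NT,0)\psi_2\, e^{iN\theta},\]
valid for $\textrm{Im}(\theta)\geq AT$ by Proposition \ref{p2}, converges absolutely on $\{\textrm{Im}(\theta)\geq0\}$ since $\sum 1/((N+1)\ln^2(N+e))<\infty$. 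Its sum is therefore holomorphic in $\{\textrm{Im}(\theta)>0\}$, continuous up to the real axis, and bounded near $\theta=0$ --- exactly $\rm(H2)$ for $n$ even, as spelled out in the introduction immediately after \eqref{a}.

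To establish the displayed decay I would adapt Lemma \ref{l5}. The decisive observation is that $\psi_2\equiv 1$ on $\{\vert x\vert\leq\rho+1+3T\}$, which contains $\textrm{supp}(\psi)$, so $\psi\psi_2=\psi$; hence for the operator $S=\mathcal U(T_1,0)-\mathcal V(T_1)$ of Lemma \ref{l4} one has $S\psi_2=S\psi\psi_2=S\psi=S$. Combined with the identity $\mathcal U(T,0)=\mathcal V(T)+\mathcal V(T-T_1)S$ of \eqref{eq=lem6C}, running the induction of Lemma \ref{l5} verbatim yields, for $T$ large enough, a representation
\[\mathcal U(NT,0)\psi_2=\mathcal V(NT)\psi_2+\sum_{k=0}^{N-1}\mathcal V(kT+T-T_1)\widetilde B_N^k,\quad N\geq 1,\]
where each $\widetilde B_N^k$ satisfies $\widetilde B_N^k=\psi\widetilde B_N^k$ and $\Vert\widetilde B_N^k\Vert\leq C/((N-k)\ln^2(N-k+e))$ with a constant independent of $N$, $k$, $T$. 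Multiplying by $\psi_1$ on the left and using $\widetilde B_N^k=\psi\widetilde B_N^k$, every summand takes the form $\psi_1\mathcal V(kT+T-T_1)\psi\,\widetilde B_N^k$, while the leading term is $\psi_1\mathcal V(NT)\psi_2$. Theorem \ref{t13} supplies $\Vert\psi_1\mathcal V(t)\psi\Vert,\ \Vert\psi_1\mathcal V(t)\psi_2\Vert\leq C\langle t\rangle^{1-n}$; since $n-1\geq 3$, the convolution
\[\sum_{k=0}^{N-1}\frac{1}{\langle(k+1)T\rangle^{n-1}}\cdot\frac{1}{(N-k)\ln^2(N-k+e)}\]
is dominated by $C/((N+1)\ln^2(N+e))$ (split at $k=N/2$ and estimate each range separately), which delivers the required bound.

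I expect the main obstacle to be precisely the induction producing the $\widetilde B_N^k$: as in the proof of Lemma \ref{l5}, the inductive bound on $\widetilde B_{N+1}^0$ picks up an amplification constant of order $\Vert S\Vert\cdot\sum_k\langle(k+1)T\rangle^{-(n-1)}=O(T^{-(n-1)})$, and closing the iteration requires this factor to be absorbed into the induction hypothesis --- this is exactly what forces $T$ to be chosen large enough, and it is the only place in the argument where the smallness of $1/T$ is used. All remaining steps are routine: the convolution estimate is a standard split-and-sum computation, and the passage from \eqref{a} to $\rm(H2)$ via the geometric series is already laid out in the introduction.
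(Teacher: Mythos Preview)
Your approach is correct and in fact simpler than the paper's. The paper does not run the Lemma~\ref{l5} induction with the large cutoff $\psi_2$ directly; instead it splits $\phi_2=\chi\phi_2+(1-\chi)\phi_2$ with a small cutoff $\chi\in\mathcal C_0^\infty(|x|\leq\rho+\tfrac14)$ independent of $T$, applies Lemmas~\ref{l5} and~\ref{l7} to the first piece, and handles the second piece by a Duhamel identity
\[
\mathcal U(NT,0)(1-\chi)=(1-\chi)\mathcal V(NT)-\int_0^{NT}\mathcal U(NT,s)\beta\,Q\,\beta\mathcal V(s)\,\d s,
\]
which in turn requires the $s$-shifted analogue of Lemma~\ref{l5} (namely Lemma~\ref{l6}) to estimate $\phi_1\mathcal U(NT,s)\beta$. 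Your observation $S\psi_2=S$ (via $S=S\psi$ and $\psi\psi_2=\psi$) lets the induction of Lemma~\ref{l5} be run verbatim with $\psi_2$ on the right, so that neither the Duhamel step nor Lemma~\ref{l6} is needed; this is a genuine streamlining. What the paper's route buys is that all cutoffs appearing \emph{inside} the inductions have $T$-independent support, so the constants in \eqref{eq=lem6B} and \eqref{eq=lem7B} are manifestly independent of $T$.

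One small caveat in your write-up: you claim the constant in $\|\widetilde B_N^k\|\leq C/((N-k)\ln^2(N-k+e))$ is independent of $T$. In the inductive formula
\[
\widetilde B_{N+1}^0=S\psi\mathcal V(NT)\psi_2+\sum_{k=0}^{N-1}S\psi\mathcal V(kT+T-T_1)\psi\,\widetilde B_N^k,
\]
the first term is controlled via Theorem~\ref{t13} by $\|S\|\,C_{\psi,\psi_2}\langle NT\rangle^{1-n}$, and $C_{\psi,\psi_2}$ a priori depends on $\textrm{supp}\,\psi_2$, hence on $T$. This does not spoil your argument: once $T$ is fixed large enough that the amplification factor from the convolution sum is $<1$, the induction closes with a constant $C=C(T)$ independent of $N$, which is all that is needed for the summability $\sum_N\|\psi_1\mathcal U(NT,0)\psi_2\|<\infty$ and hence for $\rm(H2)$. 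Just drop the claim of $T$-independence (or track the support-dependence in Theorem~\ref{t13} if you want to keep it).
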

\begin{proof}
Choose $T\geq2$ such that conditions (\ref{eq=lem6A}), (\ref{eq=lem6B}), (\ref{eq=lem7A}) and (\ref{eq=lem7B}) are fulfilled, and set $\phi_1,\phi_2\in\mathcal C^\infty_0(\vert x\vert\leq\rho+2+3T)$ satisfying $\phi_i=1$ for $\vert x\vert\leq\rho+3T+1$, $i=1,2$. Let $\chi\in\mathcal{C}^\infty_0(\vert x\vert\leq\rho+\frac{1}{4})$ be such that $\chi=1$, for $\vert x\vert\leq\rho+\frac{1}{5}$. Consider the following representation
\begin{equation} \label{eq=thm12A}\phi_1\mathcal U(NT,0)\phi_2=\phi_1\mathcal U(NT,0)\chi\phi_2+\phi_1\mathcal U(NT,0)(1-\chi)\phi_2.\end{equation}
For the first term on the right hand side of  equality (\ref{eq=thm12A}), by applying (\ref{eq=lem8A}), we obtain
\[\Vert\phi_1\mathcal U(NT,0)\chi\phi_2\Vert\leq\frac{C'}{(N+1)\ln^2(N+e)}\]
with $C'>0$ independent of $N$. Let $v$ be the function defined by $(v(t),v_t(t))=\mathcal V(t)g$. Applying (\ref{eq=lem5B}), we can see that $w=(1-\chi)v$ is solution of
\[  \left\{\begin{array}{c}
\partial_t^2w-\Div_x(a\nabla_xw))=[\Delta_x,\chi]v,\\
(w,w_{t})(0,x)=(1-\psi(x))g(x).\end{array}\right.\]
Thus, we get the following representation
\[\mathcal U(NT,0)(1-\chi)=(1-\chi)\mathcal V(NT)- \int_0^{NT}\mathcal U(NT,s) Q\mathcal V(s)\d s,\]
where
\[Q=\left(\begin{array}{cc}0&0\\ 
\cr[\Delta_x,\chi]&0 \end{array}\right).\]
Since $\beta=1$ on $\textrm{supp}\chi$, we can rewrite this representation in the following way
\[\mathcal U(NT,0)(1-\chi)=(1-\chi)\mathcal V(NT)- \int_0^{NT}\mathcal U(NT,s)\beta Q\beta\mathcal V(s)\d s.\]
It follows
\[\begin{array}{lll}\Vert\phi_1\mathcal U(NT,0)(1-\chi)\phi_2\Vert_{\mathcal L({\B})}&\leq&\Vert\phi_1(1-\chi)\mathcal V(NT)\phi_2\Vert_{\mathcal L({\B})}\\ \ &\ &+C\int_0^{NT}\Vert\mathcal \phi_1U(NT,s)\beta \Vert_{\mathcal L({\B})}\Vert\beta\mathcal V(s)\phi_2\Vert_{\mathcal L({\B})}  \d s.\end{array}\]
Estimates  (\ref{eq=lem8A}), (\ref{eq=lem8B}) and (\ref{eq=thm11A}), imply
\begin{equation} \label{eq=thm12B}\begin{array}{lll}\displaystyle\Vert \phi_1\mathcal{U}(NT,0)\phi_2\Vert_{\mathcal{L}({\B})}&\leq&\displaystyle\frac{C}{(N+1)\ln^2(N+e)}\\
\ \\
\displaystyle\ &\ &\displaystyle+C'\int_{0}^{NT}\frac{1}{(NT-s+1)\ln^2(NT-s+e)}\cdot\frac{1}{(s+1)\ln^2(s+e)}\d s\end{array}\end{equation}
and  we get
\[\Vert \phi_1\mathcal U(NT,0)\phi_2\Vert_{\mathcal{L}({\B})}\leq\displaystyle\frac{C}{(N+1)\ln^2(N+1+e)}+\frac{2C_1}{\left(\frac{NT}{2}+1\right)\ln^2\left(\frac{NT}{2}+e\right)},\   N\in\mathbb N.\]
It follows that
\begin{equation} \label{eq=thm12E}\sum_{N=0}^{+\infty}\Vert \phi_1\mathcal U(NT,0)\phi_2\Vert_{\mathcal{L}({\B})}<+\infty.\end{equation}
Applying (\ref{eq=prop2A}) for all $\theta\in\mathbb C$ satisfying $ \textrm{Im} (\theta)>AT$, we obtain
\begin{equation} \label{eq=thm12F}R_{\phi_1,\phi_2}(\theta)=\phi_1(\mathcal{U}(T,0)-e^{-i\theta})^{-1}\phi_2=-e^{i\theta}\sum_{N=0}^\infty \phi_1\mathcal U(NT,0)\phi_2e^{iN\theta}.\end{equation}
The conditions (\ref{eq=thm12E}) and (\ref{eq=thm12F}) imply that the operator $R_{\psi_1,\psi_2}(\theta)$ admits an holomorphic continuation from $\{\theta\in\mathbb{C}\  :\   \textrm{Im}(\theta) \geq A \}$ to
$\{\theta\in\mathbb{C}\  :\   \textrm{Im}(\theta) >  0\}$ and $R_{\psi_1,\psi_2}(\theta)$ admits a continuous extension from $\{\theta\in\mathbb{C}\  :\   \textrm{Im}(\theta)  > 0\}$ to
$\{\theta\in\mathbb{C}\  :\   \textrm{Im}(\theta) \geq  0\}$. The proof is complete.\end{proof}

{\footnotesize
}

\end{document}